\newtheorem{thm}{Theorem}[section]
\newtheorem{cor}[thm]{Corollary}
\newtheorem{lem}[thm]{Lemma}
\newtheorem{prop}[thm]{Proposition}
\newtheorem{rem}[thm]{Remark}
\newtheorem{exam}[thm]{Example}
\numberwithin{equation}{section}
\newcommand{\Mod}{\text{\boldmath{$\mathsf{M}$}}}
\begin{document}

\title{\bf \Large 
Schr\"{o}dinger representations from the viewpoint of \linebreak monoidal categories} 
\author{Kenichi Shimizu and Michihisa Wakui}
\date{}
\maketitle 

\begin{abstract}
  The Drinfel'd double $D(A)$ of a finite-dimensional Hopf algebra $A$ is a Hopf algebraic counterpart of the monoidal center construction. 
Majid introduced an important representation of $D(A)$, which he called the Schr\"{o}dinger representation. 
We study this representation from the viewpoint of the theory of monoidal categories. One of our main results is as follows: 
If two finite-dimensional Hopf algebras $A$ and $B$ over a field $\boldsymbol{k}$ are monoidally Morita equivalent, {\it i.e.}, there exists an equivalence $F: {}_A\Mod \to {}_B\Mod$ of $\boldsymbol{k}$-linear monoidal categories, 
then the equivalence ${}_{D(A)}\Mod \approx {}_{D(B)}\Mod$ induced by $F$ preserves the Schr\"{o}dinger representation. 
Here, ${}_A\Mod$ for an algebra $A$ means the category of left $A$-modules.

As an application, we construct a family of invariants of finite-dimensional Hopf algebras under the monoidal Morita equivalence. 
This family is parameterized by braids. 
The invariant associated to a braid $\boldsymbol{b}$ is, roughly speaking, defined by \lq\lq coloring'' the closure of $\boldsymbol{b}$ by the Schr\"{o}dinger representation. 
We investigate what algebraic properties this family have and, in particular, 
show that the invariant associated to a certain braid closely relates to the number of irreducible representations.
\end{abstract}

{\small Key words: Hopf algebra, monoidal category, Schr\"{o}dinger representation, Drinfel'd double}
\par 
{\small Mathematics Subject Classifications (2010): 16T05, 16D90, 18D10}

\baselineskip 17pt

\section{Introduction}

Drinfel'd doubles of Hopf algebras \cite{Dri} are one of the most important objects in not only Hopf algebra theory, 
but also in other areas including category theory and low-dimensional topology. 
Let $A$ be a finite-dimensional Hopf algebra over a field $\boldsymbol{k}$, and $(D(A), \mathcal{R})$ be its Drinfel'd double. 
Due to Majid \cite{MajidBook}, it is known that there is a canonical representation of $D(A)$ on $A$, 
which is called the {\em Schr\"{o}dinger representation} (or the {\em Schr\"odinger module}). 
This representation is an extension of the adjoint representation of $A$, and originates from quantum mechanics as explained in Majid's book \cite[Examples 6.1.4 \& 7.1.8]{MajidBook} (see Section 2 for the precise definition of the Schr\"{o}dinger representation). 
The Schr\"odinger module is also addressed by Fang \cite{Fang} as an algebra in the Yetter-Drinfel'd category ${}^A_A\mathcal{YD}$ via the Miyashita-Ulbrich action.

In this paper, we study the Schr\"odinger module over the Drinfel'd double from the viewpoint of the theory of monoidal categories. 
We say that two finite-dimensional Hopf algebras $A$ and $B$ over the same field $\boldsymbol{k}$ are {\em monoidally Morita equivalent} if ${}_A\Mod$ and ${}_B\Mod$ are equivalent as $\boldsymbol{k}$-linear monoidal categories, 
where ${}_H \Mod$ for an algebra $H$ is the category of $H$-modules. 
One of our main results is that the Schr\"odinger module is an invariant under the monoidal Morita equivalence in the following sense: 
If $F: {}_A\Mod \to {}_B\Mod$ is an equivalence of $\boldsymbol{k}$-linear monoidal categories, then the equivalence ${}_{D(A)}\Mod \approx {}_{D(B)}\Mod$ induced by $F$ preserves the Schr\"{o}dinger modules. 
To prove this result, we introduce the notion of the {\em Schr\"{o}dinger object} for a monoidal category by using the monoidal center construction. 
It turns out that the Schr\"odinger module over $D(A)$ is characterized as the Schr\"{o}dinger object for ${}_A\Mod$. 
Once such a characterization is established, the above result easily follows from general arguments.

As an application of the above category-theoretical understanding of the Schr\"{o}dinger module, 
we construct a new family of {\em monoidal Morita invariants}, {\it i.e.}, invariants of finite-dimensional Hopf algebras under the monoidal Morita equivalence. 
Some monoidal Morita invariants have been discovered and studied; see, {\it e.g.}, \cite{EG1, EG2, EG3, KMN, NS1, NS2, Shimizu, W}. 
Our family of invariants is parametrized by braids. 
Roughly speaking, the invariant associated with a braid $\boldsymbol{b}$ is defined by \lq\lq coloring'' the closure of $\boldsymbol{b}$ by the Schr\"odinger module. 
Since the quantum dimension (in the sense of Majid \cite{MajidBook}) of the Schr\"{o}dinger module is a special case of our invariants, we call the invariant associated with $\boldsymbol{b}$ the {\em braided dimension} of the Schr\"{o}dinger module associated with $\boldsymbol{b}$. 
We investigate what algebraic properties this family of invariants have and, in particular, 
show that the invariant associated to a certain braid closely relates to the number of irreducible representations.

This paper organizes as follows:
In Section 2,  we recall the definition of the Schr\"{o}dinger module over the Drinfel'd double $D(A)$ of a finite-dimensional Hopf algebra $A$.  
We also describe the definition of another Schr\"{o}dinger representation of $D(A)$ on $A^{\ast{\rm cop}}$, which is introduced by Fang \cite{Fang}.  
We refer the corresponding left $D(A)$-module as the {\em dual Schr\"{o}dinger module}. 
Following \cite{HuZhang} we also describe the definition of Radford's induction functors and its properties. 
It is shown that the Schr\"{o}dinger module and the dual Sch\"{o}dinger module are isomorphic to the images of the trivial left $A$-module and the trivial right $A$-comodule under Radford's induction functors, respectively. 
Furthermore, we examine the relationship between the Schr\"{o}dinger module over $D(A^{\ast})$ and the dual Schr\"{o}dinger module over $D(A)$.

In Section 3,  we study the categorical aspects of the Schr\"{o}dinger module and the dual Schr\"{o}dinger module. 
We introduce a Schr\"{o}dinger object for a monoidal category $\mathcal{C}$ as the object of $\mathcal{Z}(\mathcal{C})$ representing the functor ${\rm Hom}_{\mathcal{C}}(\Pi(-), \mathbb{I})$, 
where $\mathcal{Z}(\mathcal{C})$ is the monoidal center of $\mathcal{C}$, $\Pi: \mathcal{Z}(\mathcal{C}) \to \mathcal{C}$ is the forgetful functor, 
and $\mathbb{I}$ is the unit object of $\mathcal{C}$. 
By using the properties of Radford's induction functor, we show that the Schr\"{o}dinger module over $D(A)$ is a Schr\"odinger object for ${}_A \Mod$ under the identification $\mathcal{Z}({}_A \Mod) \approx {}_{D(A)}\Mod$. 
Once this characterization is obtained, the invariance of the Schr\"odinger module (stated above) is easily proved. 
A similar result for the dual Schr\"odinger module is also proved.

In Section 4, based on our category-theoretical understanding of the Schr\"{o}dinger module, 
we introduce a family of monoidal Morita invariants parameterized by braids. 
We give formulas for the invariants associated with a certain series of braids, and give some applications. 
Note that some monoidal Morita invariants, such as ones introduced in \cite{EG1} and \cite{Shimizu}, factor through the Drinfel'd double construction. 
Our invariants have an advantage that they do not factor through that. 
On the other hand, our invariants have a disadvantage in the non-semisimple situation: For any braid $\boldsymbol{b}$, the braided dimension of the Schr\"{o}dinger module of $D(A)$ associated with $\boldsymbol{b}$ is zero unless $A$ is cosemisimple. 
From this result, we could say that our invariants are not interesting as monoidal Morita invariants for non-cosemisimple Hopf algebras. 
However, endomorphisms on the Sch\"{o}dinger module induced by braids are not generally zero, and thus may have some information about $A$. 
To demonstrate, we give an example of a morphism induced by a braid, which turns out to be closely related to the unimodularlity of $A$.

\par 
Throughout this paper, $\boldsymbol{k}$ is an arbitrary field, and all tensor products $\otimes $ are taken over $\boldsymbol{k}$. 
For $\boldsymbol{k}$-vector spaces $X$ and $Y$, denoted by $T_{X, Y}$ is the $\boldsymbol{k}$-linear isomorphism from $X\otimes Y$ to $Y\otimes X$ defined by $T_{X,Y}(x\otimes y)=y\otimes x$ for all $x\in X$ and $y\in Y$. 
For a coalgebra $(C, \Delta , \varepsilon )$ and $c\in C$ we use the Sweedler's notation $\Delta (c)=\sum c_{(1)}\otimes c_{(2)}$. 
If $(M, \rho )$ is a right $C$-comodule and $m\in M$, then we also use the notation $\rho (m)=\sum m_{(0)}\otimes m_{(1)}$. 
For a Hopf algebra $A$, denoted by $\Delta _A$, $\varepsilon _A$ and $S_A$ are 
the coproduct, the counit, and the antipode of $A$,  respectively. 
If the antipode of $A$ is bijective, then one has two Hopf algebras $A^{{\rm cop}}$ and $A^{{\rm op}}$, which are defined from $A$ by replacing $\Delta_A$ by the  opposite coproduct $\Delta _A^{{\rm cop}}=T_{A,A}\circ \Delta _A$ and replacing the product by the opposite product, respectively. 
A Hopf algebra map $\alpha : A \longrightarrow B$ induces a Hopf algebra map form $A^{\rm cop}$ to $B^{\rm cop}$. 
The map is denoted by $\alpha^{\rm cop}$. 
If  $A$ is a finite-dimensional, then the dual vector space $A^{\ast}$ is also a Hopf algebra so that 
$\langle pq,\ a\rangle =\sum p(a_{(1)})q(a_{(2)}), 
1_{A^{\ast}}=\varepsilon _A, 
\langle \Delta _{A^{\ast}}(p),\ a\otimes b\rangle =p(ab),\ 
\varepsilon _{A^{\ast}}(p)=p(1_A),\ 
S_{A^{\ast}}(p)=p\circ S_A$ for 
$p,q\in A^{\ast}$ and $a,b\in A$, where $\langle \ , \ \rangle $ stands for the natural pairing between $A$ and $A^{\ast}$, or $A\otimes A$ and $A^{\ast}\otimes A^{\ast}$. 
Denoted by ${}_A\Mod$ is the $\boldsymbol{k}$-linear monoidal category whose objects are left $A$-modules and morphisms are $A$-module maps, and 
$\Mod^A$ is the $\boldsymbol{k}$-linear monoidal category whose objects are right $A$-comodules and morphisms are $A$-comodule maps. 

\par 
For general facts on Hopf algebras, refer to Abe's book \cite{Abe}, Montgomery's book \cite{Mon} and Sweedler's book \cite{Sw}, and for general facts on monoidal categories, refer to MacLane's book \cite{Mac}, Kassel's book \cite{Kassel} and Joyal and Street's paper \cite{JS}. 

\section{Schr\"{o}dinger modules of the Drinfel'd double}
\par 

\par \smallskip 
\subsection{Preliminaries: the Drinfel'd double}

Let $A$ be a finite-dimensional Hopf algebra $A$ over the field $\boldsymbol{k}$. 
The Drinfel'd double $D(A)$ of $A$ \cite{Dri} is the Hopf algebra such that 
as a coalgebra $D(A)=A^{\ast{\rm cop}}\otimes A$, and 
the multiplication is given by 
$$(p\otimes   a)\cdot (p^{\prime}\otimes  a^{\prime})
=\sum \langle p_{(1)}^{\prime}, S_A^{-1}(a_{(3)})\rangle 
\langle p_{(3)}^{\prime}, a_{(1)}\rangle 
p p_{(2)}^{\prime}\otimes a_{(2)}a^{\prime}$$ 
for all $p, p^{\prime}\in A^{\ast}$ and $a, a^{\prime}\in A$, where 
$((\Delta _A\otimes {\rm id})\circ \Delta _A)(a)=\sum a_{(1)}\otimes a_{(2)}\otimes a_{(3)},\ 
((\Delta _{A^{\ast}}\otimes {\rm id})\circ \Delta _{A^{\ast}})(p)=\sum p_{(1)}\otimes p_{(2)}\otimes p_{(3)}$.
The unit element is $\varepsilon _A\otimes 1_A$, and the antipode is given by 
$$S_{D(A)}(p\otimes  a)=\sum \langle p_{(1)}, a_{(3)}\rangle \langle S_{A^{\ast}}^{-1}(p_{(3)}), a_{(1)} \rangle S_{A^{\ast}}^{-1}(p_{(2)})\otimes  S_A(a_{(2)}).$$

For all $p\in A^{\ast}$ and $a\in A$,  the element $p\otimes a$ in $D(A)$ is frequently written in the form $p\bowtie a$, since the Drinfel'd double can be viewed as a bicrossed product $A^{\ast{\rm cop}}\bowtie A$ due to Majid \cite{MajidPhysics}.  
The Drinfel'd double $D(A)$ has a canonical quasitriangular structure as described below \cite{Dri}. 
Let $\{e_i\}_{i=1}^d$ be a basis for $A$, and $\{e_i^{\ast}\}_{i=1}^d$ be its dual basis for $A^{\ast}$.  Then 
$\mathcal{R}=\sum_{i=1}^d (\varepsilon _A\bowtie  e_i)\otimes (e_i^{\ast}\bowtie  1_A) \in D(A)\otimes D(A)$ satisfies 
the following conditions. 
\begin{enumerate}
\item[$\bullet$] $\Delta ^{\textrm{cop}}(x)=\mathcal{R}\cdot \Delta (x)\cdot \mathcal{R}^{-1} \ \ \mbox{ for all } x\in D(A)$, 
\item[$\bullet$] $(\Delta \otimes \textrm{id})(\mathcal{R})=\mathcal{R}_{13}\mathcal{R}_{23}$, 
\item[$\bullet$] $(\textrm{id}\otimes \Delta )(\mathcal{R})=\mathcal{R}_{13}\mathcal{R}_{12}$. 
\end{enumerate}
Here $\mathcal{R}_{ij}$ denotes the element in $D(A)\otimes D(A)\otimes D(A)$ obtained by substituting the first and the second components in 
$\mathcal{R}$ to the $i$-th and $j$-th components, respectively, and substituting $1$ to elsewhere. 
Thus, the pair $(D(A), \mathcal{R})$ is a quasitriangular Hopf algebra \cite{Dri}. 
\par 
  Let $A$ and $B$ be two finite-dimensional Hopf algebras over $\boldsymbol{k}$. If they are isomorphic, then their Drinfel'd doubles are. More precisely, an isomorphism $f: A \longrightarrow B$ of Hopf algebras yields an isomorphism of quasitriangular Hopf algebras
  \begin{equation*}
    D(f) := (f^{-1})^{\ast} \otimes f : D(A) \longrightarrow D(B).
  \end{equation*}

Let $A$ be a finite-dimensional Hopf algebra over $\boldsymbol{k}$.
As shown in \cite[Theorem 3]{Ra4} and \cite[Propositions 2.10 \& 3.5]{Shimizu}, 
for the Drinfel'd doubles $(D(A), \mathcal{R})$, $(D(A^{{\rm op\kern0.1em cop}\ast }), \tilde{\mathcal{R}})$ and 
$(D(A^{\ast}), \mathcal{R}^{\prime})$, there are isomorphisms: 
\begin{equation}\label{eq2-11}
(D(A), \mathcal{R})\cong (D(A^{{\rm op\kern0.1em cop}\ast })^{{\rm op}}, \tilde{\mathcal{R}}_{21})
\cong (D(A^{\ast})^{{\rm op}}, \mathcal{R}_{21}^{\prime})\cong (D(A^{\ast})^{{\rm cop}}, \mathcal{R}_{21}^{\prime}).
\end{equation} 
Here, the first isomorphism $f_1: (D(A), \mathcal{R})\longrightarrow (D(A^{{\rm op\kern0.1em cop}\ast })^{{\rm op}}, \tilde{\mathcal{R}}_{21})$ is given by 
$$f_1(p\bowtie a)=a\bowtie p\qquad (p\in A^{\ast},\ a\in A)$$
under the identification $A^{\ast\ast}=A$. 
The second isomorphism is given by $f_2:=S\otimes  {}^t\kern-0.1em S^{-1} : (D(A^{{\rm op\kern0.1em cop}\ast })^{{\rm op}}, \tilde{\mathcal{R}}_{21})
\longrightarrow (D(A^{\ast})^{{\rm op}}, \mathcal{R}_{21}^{\prime})$ under the identification $A^{\ast\ast}=A$, where 
we regard $S$ as a Hopf algebra isomorphism from  $A^{{\rm op\kern0.1em cop}}$ to $A$. 
Finally, the third isomorphism is given by the antipode $S_{D(A^{\ast})}$ \cite[Proposition 2.10(1)]{Shimizu}. 
Composing $f_1, f_2$ and $S_{D(A^{\ast})}$ we have an  isomorphism of quasitriangular Hopf algebras 
$\phi _A : (D(A), \mathcal{R})\longrightarrow (D(A^{\ast})^{{\rm cop}}, \mathcal{R}_{21}^{\prime})$ such that
\begin{equation}\label{eq2-12}
\phi _A(p\bowtie a)=(\iota _A(1_A)\bowtie p)\cdot (\iota _A(a)\bowtie \varepsilon _A)
\end{equation}
for all $p\in A^{\ast}$ and $a\in A$, where $\iota _A: A\longrightarrow A^{\ast\ast}$ is the usual isomorphism of vector spaces.
We note the following property of the isomorphism $\phi_A$:

  \begin{lem}
    \label{lem:Drinfeld-double-iso}
    $\phi_{A^{\ast}} \circ \phi_A = D(\iota_A)$.
  \end{lem}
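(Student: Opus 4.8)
The plan is to verify the identity on a set of algebra generators, taking advantage of the fact that both sides are algebra homomorphisms. First I would note that, as a morphism of the underlying algebras, the codomain $D(A^{\ast})^{\mathrm{cop}}$ of $\phi_A$ is literally the same algebra as the domain $D(A^{\ast})$ of $\phi_{A^{\ast}}$ (the superscript $\mathrm{cop}$ alters only the comultiplication), so the composite $\phi_{A^{\ast}}\circ\phi_A\colon D(A)\to D(A^{\ast\ast})$ is a well-defined algebra isomorphism. Since $\iota_A\colon A\to A^{\ast\ast}$ is an isomorphism of Hopf algebras, $D(\iota_A)$ is likewise an algebra isomorphism $D(A)\to D(A^{\ast\ast})$. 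Hence it suffices to show that the two maps agree on a generating set of $D(A)$.

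Next I would record that $D(A)$ is generated as an algebra by the two subalgebras $\{\,p\bowtie 1_A \mid p\in A^{\ast}\,\}$ and $\{\,\varepsilon_A\bowtie a \mid a\in A\,\}$; this follows from the factorization $p\bowtie a=(p\bowtie 1_A)\cdot(\varepsilon_A\bowtie a)$, which is an immediate consequence of the multiplication rule for $D(A)$. Thus it is enough to evaluate both sides on the elements $p\bowtie 1_A$ and $\varepsilon_A\bowtie a$.

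To handle the left-hand side I would first simplify $\phi_A$ on these generators using the defining formula \eqref{eq2-12} together with the unit--counit identity $\iota_A(1_A)=\varepsilon_{A^{\ast}}=1_{A^{\ast\ast}}$, which collapses one of the two factors in \eqref{eq2-12} to the identity of $D(A^{\ast})$; this yields $\phi_A(p\bowtie 1_A)=\varepsilon_{A^{\ast}}\bowtie p$ and $\phi_A(\varepsilon_A\bowtie a)=\iota_A(a)\bowtie\varepsilon_A$. Applying $\phi_{A^{\ast}}$, now read off from \eqref{eq2-12} for the Hopf algebra $A^{\ast}$, and simplifying in exactly the same way, I would obtain $\iota_{A^{\ast}}(p)\bowtie\varepsilon_{A^{\ast}}$ and $\varepsilon_{A^{\ast\ast}}\bowtie\iota_A(a)$, respectively. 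For the right-hand side I would use the canonical identification $(\iota_A^{-1})^{\ast}=\iota_{A^{\ast}}$, so that $D(\iota_A)=\iota_{A^{\ast}}\otimes\iota_A$ sends $p\bowtie a$ to $\iota_{A^{\ast}}(p)\bowtie\iota_A(a)$; evaluating on the two generators and using $\iota_{A^{\ast}}(\varepsilon_A)=\varepsilon_{A^{\ast\ast}}$ and $\iota_A(1_A)=\varepsilon_{A^{\ast}}$ gives precisely the same two expressions, which completes the verification.

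The computations themselves are routine, and the only thing demanding attention is the bookkeeping among the iterated duals $A^{\ast},A^{\ast\ast},A^{\ast\ast\ast}$ and the several occurrences of $\mathrm{cop}$. The genuinely non-formal ingredients are the canonical compatibilities $\iota_A(1_A)=\varepsilon_{A^{\ast}}$ and $(\iota_A^{-1})^{\ast}=\iota_{A^{\ast}}$: it is exactly these identifications that make the superfluous factors cancel and force the two evaluations to coincide, and I expect checking them to be the main (albeit modest) obstacle.
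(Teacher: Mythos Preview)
Your proposal is correct and is essentially the same argument as the paper's: both rely on the factorization $p\bowtie a=(p\bowtie 1_A)\cdot(\varepsilon_A\bowtie a)$, the multiplicativity of $\phi_{A^{\ast}}$, and the identity $\iota_{A^{\ast}}=(\iota_A^{-1})^{\ast}$, evaluating $\phi_{A^{\ast}}\circ\phi_A$ on each factor and matching it against $D(\iota_A)$. The paper merely compresses your generator-by-generator check into a single line for the general element $p\bowtie a$.
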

  \begin{proof}
    As is well-known, $\iota_{A^{\ast}} = (\iota_A^{-1})^{\ast}$. Hence, for all $a \in A$ and $p \in A^{\ast}$,
    \begin{align*}
      (\phi_{A^{\ast}}\circ \phi_A)(p \bowtie a)
      & = \phi_{A^{\ast}}(\iota _A(1_A) \bowtie p) \cdot \phi_{A^{\ast}}(\iota_A(a) \bowtie \varepsilon_A) \\
      & = (\iota_{A^{\ast}}(p) \bowtie \iota_A(1_A)) \cdot (\iota_{A^{\ast}}(\varepsilon_A) \bowtie \iota_A(a)) = D(\iota_A)(p \bowtie a).
      \qedhere
    \end{align*}
  \end{proof}

\par \smallskip 
\subsection{Schr\"{o}dinger modules}

The Drinfel'd double $D(A)$ has a canonical representation, which is called the Schr\"{o}dinger representation as described in Majid's book \cite[Examples 6.1.4 \& 7.1.8]{MajidBook}. 
This representation is obtained by unifying the left adjoint action of $A$ and the right $A^{\ast}$-action $\leftharpoonup $ (See below (\ref{eq2-1}) and (\ref{eq2-2}) for precise definition). 
It is generalized to quasi-Hopf case by Bulacu and Torrecillas \cite[Section 3]{BulacuTorrecillas}.  
Another generalization is given by Fang \cite[Section 2]{Fang}. 
He introduced two Schr\"{o}dinger representations for the Drinfel'd double defined by a generalized Hopf pairing. 
One corresponds to the original Schr\"{o}dinger representation, and the other corresponds to a dual version of it. 
Specializing in our setting, we will describe these representations below. 
\par 
There are four actions defined as follows. 
\begin{align}
a\blacktriangleright c & =\sum a_{(1)}cS(a_{(2)}) \qquad (a,c\in A), \label{eq2-1}\\ 
a\leftharpoonup  p & =\sum \langle p , a_{(1)}\rangle a_{(2)} \qquad (p\in A^{\ast},\ a\in A), \label{eq2-2}\\ 
q \blacktriangleleft p  & = \sum S(p_{(1)})qp_{(2)} \qquad (p, q\in A^{\ast}) \label{eq2-3},\\ 
a\rightharpoonup  q & =\sum q_{(1)} \langle q_{(2)}, a\rangle \qquad (q\in A^{\ast},\ a\in A). \label{eq2-4}
\end{align}

By using these actions two left actions $\bullet $ of $D(A)$ on $A$ and $A^{\ast}$ can be defined by 
\begin{align}
(p\bowtie a) \bullet b  
& = (a\blacktriangleright b)\leftharpoonup S^{-1}(p), \label{eq2-5}\\ 
(p\bowtie a) \bullet q 
& = (a \rightharpoonup q) \blacktriangleleft S^{-1}(p)\label{eq2-6}
\end{align}
\noindent 
for all $a, b\in A^{\ast}$ and $p,q\in A^{\ast}$. 
We call the left $D(A)$-modules $(A, \bullet)$ and $(A^{\ast}, \bullet)$ the {\em Schr\"{o}dinger module} and the {\em dual Schr\"{o}dinger module}, and denote them by ${}_{D(A)}A$ and ${}_{D(A)}A^{\ast{\rm cop}}$, respectively.

\begin{prop}\label{2-7}
Regard  the dual Schr\"{o}dinger module ${}_{D(A^{\ast})}(A^{\ast})^{\ast{\rm cop}}$ of $D(A^*)$ as a left $D(A)$-module by pull-back along the isomorphism $\phi _A$ defined by (\ref{eq2-12}). 
Then 
${}_{D(A^{\ast})}(A^{\ast})^{\ast{\rm cop}}$ is isomorphic to the Schr\"{o}dinger module ${}_{D(A)}A$ as a left $D(A)$-module. 
\end{prop}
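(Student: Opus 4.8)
The plan is to exhibit an explicit linear isomorphism between the two underlying spaces and then check that it is $D(A)$-linear. The dual Schr\"{o}dinger module ${}_{D(A^\ast)}(A^\ast)^{\ast\mathrm{cop}}$ has underlying vector space $(A^\ast)^\ast = A^{\ast\ast}$, which the canonical map $\iota_A$ identifies with $A$, the very space underlying the Schr\"{o}dinger module ${}_{D(A)}A$. One's first guess for the intertwiner is $\iota_A$ itself, but a short computation shows this fails, and that the correct map is
$$\psi := \iota_A \circ S_A^{-1} : A \longrightarrow A^{\ast\ast} = (A^\ast)^\ast,$$
which is bijective since $S_A$ is. I would therefore claim that $\psi$ is the desired isomorphism and reduce the whole problem to verifying $\psi\bigl((p \bowtie a) \bullet b\bigr) = \phi_A(p \bowtie a) \bullet \psi(b)$ for all $p \in A^\ast$ and $a, b \in A$, where on the right the pulled-back action is $x \cdot v = \phi_A(x) \bullet v$ (note that the $D(A^\ast)$-module structure only uses the algebra structure, so the \lq\lq$\mathrm{cop}$'' of the codomain of $\phi_A$ is immaterial).

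Next I would reduce to generators. Since $p \bowtie a = (p \bowtie 1_A) \cdot (\varepsilon_A \bowtie a)$, the algebra $D(A)$ is generated by the two families $p \bowtie 1_A$ $(p \in A^\ast)$ and $\varepsilon_A \bowtie a$ $(a \in A)$; because both actions are multiplicative and $\phi_A$ is an algebra homomorphism, it suffices to verify the intertwining relation on these two families. Using the defining formula \eqref{eq2-12} together with the identity $\iota_A(1_A) = \varepsilon_{A^\ast}$ (so that $\iota_A(1_A) \bowtie \varepsilon_A$ is the unit of $D(A^\ast)$), I would first record that $\phi_A$ sends the generators to $\phi_A(p \bowtie 1_A) = \iota_A(1_A) \bowtie p$ and $\phi_A(\varepsilon_A \bowtie a) = \iota_A(a) \bowtie \varepsilon_A$.

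The heart of the argument is then a direct computation of the dual Schr\"{o}dinger action of $D(A^\ast)$ on $(A^\ast)^\ast$ applied to these two elements. Unwinding \eqref{eq2-6} for the Hopf algebra $A^\ast$, the element $\iota_A(1_A) \bowtie p$ acts by $p \rightharpoonup (-)$, while $\iota_A(a) \bowtie \varepsilon_A$ acts by $(-) \blacktriangleleft S^{-1}(\iota_A(a))$, where $\rightharpoonup$, $\blacktriangleleft$ and the antipode are those attached to $A^\ast$ and $(A^\ast)^\ast$. Transporting these back to $A$ through the Hopf-algebra isomorphism $\iota_A$ (which intertwines product, coproduct and antipode) and comparing with $(p \bowtie 1_A) \bullet b = b \leftharpoonup S^{-1}(p)$ and $(\varepsilon_A \bowtie a) \bullet b = a \blacktriangleright b$, the two required equalities follow from the anti-multiplicativity and anti-comultiplicativity of $S_A$, specifically from $\Delta \circ S_A^{-1} = (S_A^{-1} \otimes S_A^{-1}) \circ \Delta_A^{\mathrm{cop}}$ and $\langle S_{A^\ast}^{-1}(p), a \rangle = \langle p, S_A^{-1}(a) \rangle$.

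I expect the main obstacle to be purely bookkeeping: keeping track of the nested dualizations $A \to A^\ast \to (A^\ast)^\ast$ and of which antipode ($S_A$, $S_{A^\ast}$, or $S_{(A^\ast)^\ast}$) acts where, and in particular realizing that the naive identification $\iota_A$ must be twisted by $S_A^{-1}$ for the adjoint-type and $\leftharpoonup$-type pieces to match. Once the correct map $\psi$ is pinned down, each of the two generator checks is a one-line Sweedler-notation manipulation.
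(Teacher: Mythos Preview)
Your proposal is correct and takes essentially the same approach as the paper: both identify the explicit isomorphism $\iota_A\circ S_A^{-1}$ (equivalently its inverse $S_A\circ\iota_A^{-1}$, which is how the paper states it) and verify $D(A)$-linearity by direct Sweedler computation. The only organizational difference is that the paper computes the pulled-back action of a general element $p\bowtie a$ in one pass, whereas you split into the two generating families $p\bowtie 1_A$ and $\varepsilon_A\bowtie a$; this makes each check shorter but is otherwise the same argument.
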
 
\begin{proof}
Let $\bar{ \bullet}$ be the left action on ${}_{D(A^{\ast})}(A^{\ast})^{\ast{\rm cop}}=A$ as a left $D(A)$-module by pull-back along $\phi _A$. 
Then, for $p\in A^{\ast}$ and $a,b\in A$, we have 
\begin{align*}
(p\bowtie a)\bar{\ \bullet \ } b 
&=\phi _A(p\bowtie a)\bullet b \\ 
&=(1_A\bowtie p)\bullet \bigl( (a\bowtie \varepsilon _A)\bullet b\bigr) \\ 
&=(1_A\bowtie p)\bullet \bigl( \sum a_{(2)}bS^{-1}(a_{(1)})\bigr) \\ 
&=\sum \langle a_{(4)}b_{(2)} S^{-1}(a_{(1)}),\ p\rangle a_{(3)}b_{(1)}S^{-1}(a_{(2)}) \\ 
&=\sum \langle a_{(1)}S(b_{(2)})S(a_{(4)}),\ S^{-1}(p)\rangle S^{-1}\bigl( a_{(2)}S(b_{(1)})S(a_{(3)})\bigr) \\ 
&=S^{-1}\bigl( (p\bowtie a) \bullet S(b)\bigr) . 
\end{align*}
This implies that 
$S\bigl( (p\bowtie a)\bar{\ \bullet\ } b\bigr) =(p\bowtie a) \bullet S(b)$. 
Thus, the composition $S\circ \iota _A^{-1}: A^{\ast\ast} \longrightarrow A$ gives an isomorphism ${}_{D(A^{\ast})}(A^{\ast})^{\ast{\rm cop}}\longrightarrow {}_{D(A)}A$ of left $D(A)$-modules. 
\end{proof}

\begin{cor}\label{2-8}
  Regard the dual Schr\"{o}dinger module ${}_{D(A)}(A)^{\ast{\rm cop}}$ as a left $D(A^{\ast})$-module by pull-back along the isomorphism 
$(\phi_A^{-1})^{\rm cop} : (D(A^{\ast}), \mathcal{R}^{\prime})\longrightarrow (D(A)^{{\rm cop}}, \mathcal{R}_{21})$. 
Then 
${}_{D(A)}(A)^{\ast{\rm cop}}$ is isomorphic to the Schr\"{o}dinger module ${}_{D(A^{\ast})}A^{\ast}$ as a left $D(A^{\ast})$-module. 
\end{cor}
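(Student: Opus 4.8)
The plan is to deduce this corollary from Proposition~\ref{2-7} applied to $A^{\ast}$ in place of $A$, transporting the resulting module isomorphism along the canonical identification $\iota_A : A \to A^{\ast\ast}$ by means of Lemma~\ref{lem:Drinfeld-double-iso}. The argument is purely formal provided one keeps careful track of the iterated dualizations and of the ${\rm cop}$/${\rm op}$ decorations.

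First I would apply Proposition~\ref{2-7} verbatim with $A^{\ast}$ substituted for $A$. This produces an isomorphism of left $D(A^{\ast})$-modules between the Schr\"odinger module ${}_{D(A^{\ast})}A^{\ast}$ and the pull-back of the dual Schr\"odinger module ${}_{D(A^{\ast\ast})}(A^{\ast\ast})^{\ast{\rm cop}}$ along $\phi_{A^{\ast}} : D(A^{\ast}) \to D(A^{\ast\ast})^{{\rm cop}}$. It then remains to recognize this left-hand pull-back as the one appearing in the statement.

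Next I would reconcile the two pull-backs. Since the pull-back of a module depends only on the underlying algebra homomorphism, and the ${\rm cop}$ decoration does not affect the algebra structure, the maps $(\phi_A^{-1})^{{\rm cop}}$ and $\phi_A^{-1}$ induce the same pull-back functor. Lemma~\ref{lem:Drinfeld-double-iso} gives $\phi_{A^{\ast}}\circ \phi_A = D(\iota_A)$, hence $\phi_{A^{\ast}} = D(\iota_A)\circ \phi_A^{-1}$ as algebra homomorphisms; since pull-back is contravariant, pulling back along $\phi_{A^{\ast}}$ equals pulling back first along $D(\iota_A)$ and then along $(\phi_A^{-1})^{{\rm cop}}$. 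Thus I only need to check that pulling back ${}_{D(A^{\ast\ast})}(A^{\ast\ast})^{\ast{\rm cop}}$ along the isomorphism $D(\iota_A) : D(A) \to D(A^{\ast\ast})$ recovers the dual Schr\"odinger module ${}_{D(A)}(A)^{\ast{\rm cop}}$. This is the naturality of the dual Schr\"odinger construction with respect to $\iota_A$, realized by the transpose $\iota_{A^{\ast}} = (\iota_A^{-1})^{\ast} : A^{\ast} \to A^{\ast\ast\ast}$, and it follows by a direct comparison of the defining actions (\ref{eq2-3}), (\ref{eq2-4}) and (\ref{eq2-6}). Feeding this identification back into the isomorphism of Proposition~\ref{2-7} then yields ${}_{D(A)}(A)^{\ast{\rm cop}} \cong {}_{D(A^{\ast})}A^{\ast}$ as left $D(A^{\ast})$-modules.

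The two invocations of Proposition~\ref{2-7} and Lemma~\ref{lem:Drinfeld-double-iso} are routine; the only delicate point is the final naturality step, where one must verify both that the canonical identification $A \cong A^{\ast\ast}$ intertwines the $D(\iota_A)$-twisted action on ${}_{D(A^{\ast\ast})}(A^{\ast\ast})^{\ast{\rm cop}}$ with the action on ${}_{D(A)}(A)^{\ast{\rm cop}}$ and that the ${\rm cop}$ decorations match up correctly. I expect this bookkeeping of dualizations and twists, rather than any substantive computation, to be the main obstacle.
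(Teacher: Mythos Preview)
Your proposal is correct and follows essentially the same route as the paper's proof: apply Proposition~\ref{2-7} to $A^{\ast}$, use Lemma~\ref{lem:Drinfeld-double-iso} to factor pull-back along $\phi_{A^{\ast}}$ as pull-back along $D(\iota_A)$ followed by pull-back along $\phi_A^{-1}$, and invoke naturality of the dual Schr\"odinger module under $\iota_A$. The paper packages these steps as functors $F_1$ (induced by $D(\iota_A^{-1})$) and $F_2$ (induced by $\phi_{A^{\ast}}$) with $F_2\circ F_1$ induced by $\phi_A^{-1}$, and treats the naturality step $F_1({}_{D(A)}A^{\ast{\rm cop}}) = {}_{D(A^{\ast\ast})}(A^{\ast\ast})^{\ast{\rm cop}}$ as evident, whereas you spell it out more explicitly; otherwise the arguments coincide.
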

\begin{proof}
  Let $F_1: {}_{D(A)}\Mod \longrightarrow {}_{D(A^{**})}\Mod$ and $F_2: {}_{D(A)}\Mod \longrightarrow {}_{D(A^*)}\Mod$ be the isomorphisms of categories induced by isomorphisms $D(\iota_A^{-1})$ and $\phi_{A^*}$, respectively. By Lemma~\ref{lem:Drinfeld-double-iso}, $F = F_2\circ F_1$ is the isomorphism of categories induced by $\phi_{A}^{-1}$. Applying Proposition~\ref{2-7} to $A^*$, we have
  \begin{equation*}
    F({}_{D(A)}(A)^{\ast{\rm cop}})
    = (F_2\circ F_1)({}_{D(A)}(A)^{\ast{\rm cop}})
    = F_2({}_{D(A^{**})}(A^{**})^{\ast{\rm cop}})
    = {}_{D(A^{\ast})}A^{\ast}. \qedhere
  \end{equation*}
\end{proof}

\par \smallskip 
\subsection{Radford's induction functors and Schr\"{o}dinger modules} 
In this subsection we show that the Schr\"{o}dinger module ${}_{D(A)}A$ is isomorphic to the image of the trivial $A$-module under Radford's induction functor. 
This implies that the Schr\"{o}dinger modules are remarkable objects from the viewpoint of category theory. 
\par 
Radford's induction functors are described  by using Yetter-Drinfel'd modules, which were introduced in \cite{Yetter} and called crossed bimodules. 
Let us recall the definition of Yetter-Drinfel'd modules \cite{RadfordTowber}. 
Let $A$ be a bialgebra over $\boldsymbol{k}$.
Suppose that $M$ is a vector space over $\boldsymbol{k}$ equipped with a left $A$-module action $\cdot $ and a right $A$-comodule coaction $\rho$ on it. 
The triple $M=(M, \cdot, \rho )$ is called \textit{a Yetter-Drinfel'd $A$-module}, 
if the following compatibility condition, that is called the Yetter-Drinfel'd condition, is satisfied for all $a\in A$ and  $m\in M$: 
\begin{equation}\label{eq2-7}
\sum (a_{(1)}\cdot m_{(0)})\otimes (a_{(2)}m_{(1)})
=\sum (a_{(2)}\cdot m)_{(0)}\otimes (a_{(2)}\cdot m)_{(1)}a_{(1)}. 
\end{equation}
 A $\boldsymbol{k}$-linear map $f: M\longrightarrow N$ between two Yetter-Drinfel'd $A$-modules is called \textit{a Yetter-Drinfel'd homomorphism} if $f$ is an $A$-module map and an $A$-comodule map. 
For two Yetter-Drinfel'd $A$-modules $M$ and $N$, 
the tensor product $M\otimes N$ becomes a Yetter-Drinfel'd $A$-module with the action and coaction: 
\begin{align*}
a\cdot (m\otimes n) = \sum (a_{(1)}\cdot  m)\otimes (a_{(2)}\cdot n),\\ 
m\otimes n \longmapsto \sum m_{(0)}\otimes n_{(0)}\otimes n_{(1)}m_{(1)}
\end{align*}
for all $a\in A,\ m\in M,\ n\in N$. 
Then, the Yetter-Drinfel'd $A$-modules and the Yetter-Drinfel'd homomorphisms make a monoidal category together with the above tensor products.
This category is called the Yetter-Drinfel'd category, and denoted by ${}_A\mathcal{YD}^A$. 
By Yetter \cite{Yetter} it is proved that 
the Yetter-Drinfel'd category  ${}_A\mathcal{YD}^A$ has a prebraiding $c$, which is a collection of Yetter-Drinfel'd homomorphisms 
$c_{M,N} : M\otimes N\longrightarrow N\otimes M$  defined by 
$$c_{M,N}(m\otimes n)=\sum n_{(0)}\otimes (n_{(1)}\cdot m)$$
for all $m\in M,\ n\in N$. 
Furthermore, he also proved that if $A$ is a Hopf algebra, then the prebraiding $c$ is a braiding for ${}_A\mathcal{YD}^A$. 

\par 
In \cite{Radford3} Radford constructed a functor from ${}_A\Mod$ to ${}_A\mathcal{YD}^A$, that is a right adjoint of the forgetful functor $R_A: {}_A\mathcal{YD}^A\longrightarrow {}_A\Mod$ as shown in \cite{HuZhang}.  

\par \smallskip \noindent 
\begin{lem}[{\bf Radford\cite[Proposition 2]{Radford3}, Hu and Zhang\cite[Lemma 2.1]{HuZhang}}]\label{2-1}
Let $A$ be a bialgebra over $\boldsymbol{k}$, and suppose that $A^{{\rm op}}$ has antipode $\overline{S}$. 
Let $L\in {}_A\Mod$. Then 
$L\otimes A\in {}_A\mathcal{YD}^A$, where 
the left $A$ action $\cdot $ and the right $A$-coaction $\rho $ are given by 
\begin{align*}
h\cdot (l\otimes a)&=\sum (h_{(2)}\cdot l)\otimes h_{(3)}a\overline{S}(h_{(1)}),\\ 
\rho (l\otimes a)&=\sum (l\otimes a_{(1)})\otimes a_{(2)}
\end{align*}
for all $h,a\in A$ and $l\in L$. 
The correspondence $L\mapsto L\otimes A$ is extended to a functor  $I_A: {}_A\Mod\longrightarrow  {}_A\mathcal{YD}^A$ that is a right adjoint of the forgetful functor $R_A$. 
The functor $I_A$ will be referred to as Radford's induction functor derived from left $A$-modules. 
\end{lem}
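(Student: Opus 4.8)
The plan is to verify three things in turn: that $L \otimes A$ with the stated action and coaction is a Yetter--Drinfel'd module, that $L \mapsto L \otimes A$ is functorial, and that the resulting functor $I_A$ is right adjoint to $R_A$. For the module axioms the key input is that $\overline{S}$, being the antipode of $A^{\mathrm{op}}$, is anti-multiplicative for $A$ (that is, $\overline{S}(gh) = \overline{S}(h)\overline{S}(g)$) and satisfies $\overline{S}(1) = 1$; combined with the fact that $\Delta$ is an algebra map, a short computation shows $g \cdot (h \cdot (l \otimes a)) = (gh) \cdot (l \otimes a)$, both sides collapsing to $\sum (g_{(2)}h_{(2)} \cdot l) \otimes g_{(3)}h_{(3)} a \overline{S}(h_{(1)})\overline{S}(g_{(1)})$, while the unit axiom is immediate. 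That $\rho$ makes $L \otimes A$ a right $A$-comodule is just coassociativity and counitality applied to the $A$-tensorand.

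The second fact about $\overline{S}$ that I would record is that, as the antipode of $A^{\mathrm{op}}$, it is anti-comultiplicative, $\Delta(\overline{S}(a)) = \sum \overline{S}(a_{(2)}) \otimes \overline{S}(a_{(1)})$, and satisfies the twisted antipode identity $\sum \overline{S}(h_{(2)}) h_{(1)} = \varepsilon(h) 1$. The cleanest route through the Yetter--Drinfel'd condition is to prove the equivalent ``derived coaction formula'' $\rho(h \cdot m) = \sum (h_{(2)} \cdot m_{(0)}) \otimes h_{(3)} m_{(1)} \overline{S}(h_{(1)})$ for $m = l \otimes a$. Expanding the left-hand side, the coproduct of $h_{(3)} a \overline{S}(h_{(1)})$ splits by anti-comultiplicativity of $\overline{S}$; expanding the right-hand side splits $h_{(2)}$ for the inner action; both sides reduce to the single expression $\sum (h_{(3)} \cdot l) \otimes h_{(4)} a_{(1)} \overline{S}(h_{(2)}) \otimes h_{(5)} a_{(2)} \overline{S}(h_{(1)})$ indexed by the fourfold iterated coproduct $\Delta^{(4)}(h)$, so they agree. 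That this derived formula is equivalent to the Yetter--Drinfel'd condition (\ref{eq2-7}) then follows by substituting $h \mapsto h_{(2)}$, right-multiplying the last tensorand by $h_{(1)}$, and contracting via $\sum \overline{S}(h_{(2)}) h_{(1)} = \varepsilon(h) 1$.

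Functoriality is immediate: for an $A$-linear $f \colon L \to L'$ the map $I_A(f) := f \otimes \mathrm{id}_A$ is visibly $A$-colinear and, since $f$ commutes with the action, $A$-linear, hence a Yetter--Drinfel'd homomorphism, and $I_A$ preserves identities and composites.

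For the adjunction $R_A \dashv I_A$ I would exhibit the natural bijection $\mathrm{Hom}_{{}_A\mathcal{YD}^A}(M, I_A(L)) \cong \mathrm{Hom}_{{}_A\Mod}(R_A(M), L)$ directly, sending an $A$-linear $g \colon M \to L$ to $\widetilde{g} := (g \otimes \mathrm{id}_A) \circ \rho_M$, i.e. $\widetilde g(m) = \sum g(m_{(0)}) \otimes m_{(1)}$, and conversely sending $\widetilde g$ to $(\mathrm{id}_L \otimes \varepsilon) \circ \widetilde g$. Colinearity of $\widetilde g$ is exactly coassociativity of $\rho_M$ and uses nothing about $g$; the only place the hypotheses on $M$ enter is in checking that $\widetilde g$ is $A$-linear, where I would apply the derived coaction formula to $M$ together with the $A$-linearity of $g$. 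The two assignments are mutually inverse by the counit axiom $\sum m_{(0)} \varepsilon(m_{(1)}) = m$ in one direction and, in the other, by colinearity of $\widetilde g$ composed with $\mathrm{id}_L \otimes \varepsilon \otimes \mathrm{id}_A$; naturality in $M$ and $L$ is clear from the formulas. The main obstacle throughout is the Sweedler bookkeeping in the Yetter--Drinfel'd verification, but the derived coaction formula tames it by making both sides visibly equal to the same $\Delta^{(4)}(h)$-expression, and it is precisely this formula that is reused to supply the $A$-linearity of the adjunction transpose.
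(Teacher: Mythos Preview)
Your proof is correct and complete. The paper itself does not supply a proof of this lemma: it is quoted from Radford and from Hu--Zhang, and the only thing the paper adds is the explicit description of the adjunction isomorphism in equation~(\ref{eq2-9}), namely $\varphi(f)(m)=\sum f(m_{(0)})\otimes m_{(1)}$, which is exactly your map $g\mapsto\widetilde g$. So there is nothing to compare at the level of strategy; you have filled in what the paper leaves to the references.

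One small remark on exposition: your route through the ``derived coaction formula'' $\rho(h\cdot m)=\sum (h_{(2)}\cdot m_{(0)})\otimes h_{(3)}m_{(1)}\overline{S}(h_{(1)})$ is a good choice, and you correctly use both directions of its equivalence with the Yetter--Drinfel'd condition (one direction to verify that $L\otimes A$ is Yetter--Drinfel'd, the other to check $A$-linearity of $\widetilde g$ for a general $M\in{}_A\mathcal{YD}^A$). It might be worth stating explicitly that the implication from~(\ref{eq2-7}) to the derived formula uses the identity $\sum h_{(2)}\overline{S}(h_{(1)})=\varepsilon(h)1$ (the other half of the antipode axiom for $A^{\rm op}$), since you only wrote down $\sum\overline{S}(h_{(2)})h_{(1)}=\varepsilon(h)1$; both hold, but the one you need for that implication is the former.
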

 
From the above lemma there is a natural $\boldsymbol{k}$-linear isomorphism $\varphi : {\rm Hom}_{{}_A\Mod}(R_A(M), V)$ $\longrightarrow {\rm Hom}_{{}_A\mathcal{YD}^A}(M, I_A(V))$ for all $M\in {}_A\mathcal{YD}^A$ and $V\in {}_A\Mod$. 
This isomorphism is given by 
\begin{equation}\label{eq2-9}
\begin{aligned}
f\in {\rm Hom}_{{}_A\Mod}(R_A(M), V) \longmapsto  &\ \varphi (f) \in  {\rm Hom}_{{}_A\mathcal{YD}^A}(M, I_A(V)), \\ 
&\quad   \bigl( \varphi (f)\bigr) (m) =\sum f(m_{(0)})\otimes m_{(1)}\quad (m\in M). 
\end{aligned}
\end{equation}

Now we recall the following easy lemma from the category theory: 

\begin{lem}
  \label{lem:adj-transfer}
  Let $P: \mathcal{C}^{\prime} \longrightarrow \mathcal{C}$ and $Q: \mathcal{D}^{\prime} \longrightarrow \mathcal{D}$ be functors, where $\mathcal{C}$, $\mathcal{C}^{\prime}$, $\mathcal{D}$ and $\mathcal{D}^{\prime}$ are arbitrary categories, 
and suppose that there are equivalences $F: \mathcal{C} \longrightarrow \mathcal{D}$ and $F^{\prime}: \mathcal{C}^{\prime} \longrightarrow \mathcal{D}^{\prime}$ of categories such that the diagram
  \begin{equation*}
    \begin{CD}
      \mathcal{C}^{\prime} @>{F^{\prime}}>> \mathcal{D}^{\prime} \\
      @V{P}V{}V @V{}V{Q}V \\
      \mathcal{C}  @>>{F}>  \mathcal{D}
    \end{CD}
  \end{equation*}
  commutes up to isomorphism. 
If $P$ has a right (left) adjoint $I$, then $J = F^{\prime} \circ I \circ \overline{F}$ is a right (left) adjoint to $Q$, where $\overline{F}$ is a quasi-inverse functor of $F$.
\end{lem}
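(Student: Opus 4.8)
The plan is to exploit the universal property of adjunctions directly, constructing the required natural isomorphism of Hom-sets by transporting the given adjunction $P \dashv I$ across the equivalences $F$ and $F'$. Suppose $I$ is a right adjoint of $P$, so that there is a natural isomorphism
\begin{equation*}
  \alpha_{X,Y} : \mathrm{Hom}_{\mathcal{C}}(P(X), Y) \longrightarrow \mathrm{Hom}_{\mathcal{C}'}(X, I(Y))
\end{equation*}
natural in $X \in \mathcal{C}'$ and $Y \in \mathcal{C}$. I want to produce a natural isomorphism
\begin{equation*}
  \mathrm{Hom}_{\mathcal{D}}(Q(X'), Y') \cong \mathrm{Hom}_{\mathcal{D}'}(X', J(Y'))
\end{equation*}
for $X' \in \mathcal{D}'$ and $Y' \in \mathcal{D}$, where $J = F' \circ I \circ \overline{F}$.

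First I would record the isomorphism $\gamma : F \circ P \xrightarrow{\ \sim\ } Q \circ F'$ coming from the hypothesis that the square commutes up to isomorphism, and fix a quasi-inverse $\overline{F}$ together with the unit/counit isomorphisms $\eta : \mathrm{id} \xrightarrow{\sim} \overline{F} \circ F$ and $\varepsilon : F \circ \overline{F} \xrightarrow{\sim} \mathrm{id}$ of the equivalence $F$. The key chain of natural isomorphisms is then assembled one step at a time. Starting from $\mathrm{Hom}_{\mathcal{D}}(Q(X'), Y')$, I apply the equivalence $\overline{F}$ (which is fully faithful) to rewrite this as $\mathrm{Hom}_{\mathcal{C}}(\overline{F}Q(X'), \overline{F}(Y'))$. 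Using the isomorphism $\gamma$ and $\eta$ to identify $\overline{F} \circ Q \circ F'$ with $P$ up to natural isomorphism, I can replace $\overline{F}Q(X')$ by $P(\overline{F'}{}(X'))$ — more carefully, I first use that $F'$ is an equivalence to write $X' \cong F'(\overline{F'}X')$, then transport $\gamma$ across. Next I apply the given adjunction isomorphism $\alpha$ to pass to $\mathrm{Hom}_{\mathcal{C}'}(\overline{F'}(X'), I\overline{F}(Y'))$, and finally apply the fully faithful functor $F'$ to obtain $\mathrm{Hom}_{\mathcal{D}'}(X', F' I \overline{F}(Y')) = \mathrm{Hom}_{\mathcal{D}'}(X', J(Y'))$.

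The conceptually cleanest route, which I would prefer to the string-of-isomorphisms bookkeeping, is to observe that since $F$ and $F'$ are equivalences they are themselves both left and right adjoints (with adjoint $\overline{F}$, $\overline{F'}$), and that the composite of adjoints is an adjoint of the composite. Because the square commutes up to isomorphism, $Q \cong F \circ P \circ \overline{F'}$ as functors $\mathcal{D}' \to \mathcal{D}$; an isomorphic functor has the same adjoints, so it suffices to exhibit a right adjoint of $F \circ P \circ \overline{F'}$. Reading off the composite of the right adjoints $F'$, $I$, $\overline{F}$ in the correct order gives exactly $J = F' \circ I \circ \overline{F}$, and the left-adjoint case is verbatim the same with "right" replaced by "left" throughout, since an equivalence is simultaneously a left and right adjoint. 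I would present this composition-of-adjunctions argument as the main proof.

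The only genuine subtlety — and the step I expect to demand the most care — is the direction-matching of the equivalences: one must check that it is $\overline{F'}$ (not $F'$) that appears on the right in $Q \cong F \circ P \circ \overline{F'}$, so that its right adjoint $F'$ lands on the \emph{left} of the composite $J = F' \circ I \circ \overline{F}$, consistent with the order in which adjoints of a composite compose (the right adjoint of $G \circ H$ is $H^{\mathrm{r}} \circ G^{\mathrm{r}}$, reversing the order). Once this order is pinned down the verification is purely formal, and because the statement asserts only the existence of an adjoint rather than any compatibility with prescribed units or counits, no coherence diagram needs to be checked; this is exactly why the lemma is labelled \emph{easy}.
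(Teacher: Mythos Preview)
Your proposal is correct. The first approach you sketch---a chain of Hom-set isomorphisms using full faithfulness of $\overline{F}$, the commutativity isomorphism $\gamma$, the adjunction $\alpha$, and full faithfulness of $F'$---is exactly what the paper does (the paper runs the chain starting from $\mathrm{Hom}_{\mathcal{D}'}(X,J(Y))$ rather than $\mathrm{Hom}_{\mathcal{D}}(Q(X'),Y')$, but this is the same argument read backwards).

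Your preferred second route, via $Q \cong F \circ P \circ \overline{F'}$ and the fact that adjoints compose, is a genuinely different and slightly more conceptual packaging: instead of writing out the Hom-set chain by hand, you invoke once that an equivalence is ambidextrously self-adjoint and once that the right adjoint of a composite is the composite of right adjoints in reverse order. This buys you a one-line proof that handles left and right adjoints uniformly, at the cost of appealing to two standard facts rather than displaying the isomorphism explicitly. The paper's version has the minor advantage that the natural isomorphism is written out concretely, which can be convenient when one later needs to identify the unit or counit of the transferred adjunction (as happens implicitly in the applications to Radford's induction functor). Either argument is entirely adequate for the lemma as stated.
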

\begin{proof}
  We only prove the case where $P$ has a right adjoint. Let $\overline{F}^{\prime}$ be a quasi-inverse of $F^{\prime}$. 
Then there are isomorphisms
  \begin{align*}
    \mathrm{Hom}_{\mathcal{D}^{\prime}} \Big( X, J(Y) \Big)
    & \cong \mathrm{Hom}_{\mathcal{C}^{\prime}} \Big( \overline{F}^{\prime}(X), (I \circ \overline{F})(Y) \Big) \\
    & \cong \mathrm{Hom}_{\mathcal{C}}  \Big((P \circ \overline{F}^{\prime})(X),  \overline{F}(Y) \Big) \\
    & \cong \mathrm{Hom}_{\mathcal{C}}  \Big( (\overline{F} \circ Q)(X), \overline{F}(Y) \Big)
    \cong \mathrm{Hom}_{\mathcal{D}}  \Big( Q(X), Y \Big) , 
  \end{align*}
which are natural in $X \in \mathcal{D}^{\prime}$ and $Y \in \mathcal{D}$. 
Hence $J$ is a right adjoint to $Q$.
\end{proof}

If $A$ is a finite-dimensional Hopf algebra over $\boldsymbol{k}$, 
then a Yetter-Drinfel'd $A$-module $M$ becomes a left $D(A)$-module by the action given by 
$(p \bowtie a) \cdot m = \sum \langle p, (a m)_{(1)} \rangle (a m)_{(0)}$ for $p \in A^{\ast}$, $a \in A$ and $m \in M$. 
This construction establishes an isomorphism ${}_A\mathcal{YD}^A \cong {}_{D(A)}\boldsymbol{\sf M}$ of $\boldsymbol{k}$-linear braided monoidal categories (see, {\it e.g.}, \cite{Majid1}).

For a while, we denote by $F^{\prime} : {}_A\mathcal{YD}^A \longrightarrow {}_{D(A)}\boldsymbol{\sf M}$ and $R^{\prime}_A:{}_{D(A)}\boldsymbol{\sf M} \longrightarrow {}_{A}\boldsymbol{\sf M}$ the above isomorphism and the restriction functor, respectively. 
Since $R_A^{\prime} \circ F^{\prime} = R_A$ ($= {\rm id} \circ R_A$), 
the functor $I^{\prime}_A = F^{\prime}\circ I_A$ is a right adjoint to $R'_A$ by Lemma~\ref{lem:adj-transfer}. 
In what follows, based on this observation, 
we identify ${}_{D(A)} \boldsymbol{\sf M}$ with ${}_{A}\mathcal{YD}^A$ via the isomorphism $F^{\prime}$ and, abusing notation, write $R^{\prime}_A$ and $I^{\prime}_A$ as $R_A$ and $I_A$, respectively. 

\par \smallskip \noindent 
\begin{prop}\label{2-3}
Let $A$ be a finite-dimensional Hopf algebra over $\boldsymbol{k}$. 
Then, the $\boldsymbol{k}$-linear map $\Phi : {}_{D(A)}A\longrightarrow I_A(\boldsymbol{k})$ defined by $\Phi (a)=1\otimes S^{-1}(a)$ for all $a\in A$ is an isomorphism of left $D(A)$-modules. 
Here, $\boldsymbol{k}$ in $I_A(\boldsymbol{k})$ means the trivial left $A$-module. 
\end{prop}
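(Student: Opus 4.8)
The plan is to show that $\Phi$ is a bijective $D(A)$-module map. Bijectivity is immediate: since $A$ is finite-dimensional its antipode $S$ is bijective, and under the canonical identification $I_A(\boldsymbol{k}) = \boldsymbol{k} \otimes A \cong A$ via $1 \otimes x \mapsto x$, the map $\Phi$ is just $a \mapsto S^{-1}(a)$, a linear isomorphism. Hence the entire content is the $D(A)$-linearity $\Phi\bigl((p \bowtie a) \bullet b\bigr) = (p \bowtie a) \cdot \Phi(b)$ for all $p \in A^{\ast}$ and $a, b \in A$. To cut down the bookkeeping, I would first observe that $D(A)$ is generated as an algebra by the two families $\{\varepsilon_A \bowtie a : a \in A\}$ and $\{p \bowtie 1_A : p \in A^{\ast}\}$; indeed a direct application of the multiplication rule gives $(p \bowtie 1_A)(\varepsilon_A \bowtie a) = p \bowtie a$. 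Since a $\boldsymbol{k}$-linear map intertwining the actions of a generating set automatically intertwines the whole algebra, it suffices to verify $D(A)$-linearity on these two families.

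Next I would record the concrete structures. By Lemma~\ref{2-1} with $\overline{S} = S^{-1}$, the module $I_A(\boldsymbol{k}) = \boldsymbol{k} \otimes A$ carries the $A$-action $h \cdot (1 \otimes x) = \sum 1 \otimes h_{(2)} x S^{-1}(h_{(1)})$, where I have used $\sum \varepsilon(h_{(2)}) h_{(3)} = h_{(2)}$ to collapse the trivial module factor, together with the $A$-coaction $\rho(1 \otimes x) = \sum (1 \otimes x_{(1)}) \otimes x_{(2)}$. The $D(A)$-action on a Yetter--Drinfel'd module reads $(p \bowtie a) \cdot m = \sum \langle p, (am)_{(1)} \rangle (am)_{(0)}$; specializing to the two generators gives $(\varepsilon_A \bowtie a) \cdot m = a \cdot m$ (by the comodule counit axiom) and $(p \bowtie 1_A) \cdot m = \sum \langle p, m_{(1)} \rangle m_{(0)}$.

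For the first family, the left-hand side is $(\varepsilon_A \bowtie a) \bullet b = a \blacktriangleright b = \sum a_{(1)} b S(a_{(2)})$, so $\Phi\bigl((\varepsilon_A \bowtie a) \bullet b\bigr) = \sum 1 \otimes S^{-1}(a_{(1)} b S(a_{(2)}))$; the right-hand side is $(\varepsilon_A \bowtie a)\cdot \Phi(b) = a \cdot (1 \otimes S^{-1}(b)) = \sum 1 \otimes a_{(2)} S^{-1}(b) S^{-1}(a_{(1)})$, and these agree because $S^{-1}$ is an anti-algebra homomorphism, whence $S^{-1}(a_{(1)} b S(a_{(2)})) = a_{(2)} S^{-1}(b) S^{-1}(a_{(1)})$. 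For the second family, $(p \bowtie 1_A) \bullet b = b \leftharpoonup S_{A^{\ast}}^{-1}(p) = \sum \langle S_{A^{\ast}}^{-1}(p), b_{(1)} \rangle b_{(2)}$, and using $\langle S_{A^{\ast}}^{-1}(p), x \rangle = \langle p, S^{-1}(x) \rangle$ gives $\Phi\bigl((p \bowtie 1_A) \bullet b\bigr) = \sum \langle p, S^{-1}(b_{(1)}) \rangle\, 1 \otimes S^{-1}(b_{(2)})$; on the other side $(p \bowtie 1_A) \cdot \Phi(b) = \sum \langle p, S^{-1}(b)_{(2)} \rangle\, 1 \otimes S^{-1}(b)_{(1)}$, which by $\Delta(S^{-1}(b)) = \sum S^{-1}(b_{(2)}) \otimes S^{-1}(b_{(1)})$ equals the same expression. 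This matches both generators and completes the verification.

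I expect the main obstacle to be purely bookkeeping rather than conceptual: correctly identifying $\overline{S}$ with $S^{-1}$, simplifying the $A$-action on the \emph{trivial} module via $\sum \varepsilon(h_{(2)}) h_{(3)} = h_{(2)}$, and keeping straight that $S^{-1}$ reverses the multiplication and flips the comultiplication, together with the adjunction $\langle S_{A^{\ast}}^{-1}(p), x \rangle = \langle p, S^{-1}(x) \rangle$ between the two antipodes. None of these steps is deep, but a misplaced order or a single slip in a Sweedler manipulation would destroy the match, so the care lies in tracking the indices.
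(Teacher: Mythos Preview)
Your proof is correct. Both your argument and the paper's are direct verifications that $\Phi$ intertwines the $D(A)$-actions; the only difference is organizational: the paper computes the action of a general element $p\bowtie h$ on $I_A(\boldsymbol{k})$ in one pass (carrying up to five Sweedler indices) and then matches it with the Schr\"odinger action via the identity $S\bigl((p\bowtie h)\cdot S^{-1}(a)\bigr)=(p\bowtie h)\bullet a$, whereas you first reduce to the generating families $\varepsilon_A\bowtie a$ and $p\bowtie 1_A$ and handle each with a two- or three-index computation. Your decomposition keeps the bookkeeping lighter at the cost of an extra (easy) reduction step; the paper's single computation is more compact but index-heavier. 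Neither approach offers a conceptual advantage over the other.
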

\begin{proof}
Let $V$ be a left $A$-module. 
The left $A^{\ast}$ action corresponding to the right coaction of $I_A(V)\in {}_A\mathcal{YD}^A$ is given by 
$$p\cdot (v\otimes a)
=\sum \langle p, (v\otimes a)_{(1)}\rangle (v\otimes a)_{(0)}
=\sum \langle p, a_{(2)}\rangle \kern0.2em v\otimes a_{(1)}$$
for all $p\in A^{\ast},\ v\in V,\ a\in A$. 
Thus, the left $D(A)$-action on $I_A(V)$ is given as follows. 
\begin{align*}
(p\bowtie h)\cdot (v\otimes a)
&=p\cdot (h\cdot (v\otimes a)) \\ 
&=\sum p\cdot ((h_{(2)}\cdot v)\otimes h_{(3)}aS^{-1}(h_{(1)})) \\ 
&=\sum \langle p,\ (h_{(3)}aS^{-1}(h_{(1)}))_{(2)}\rangle \kern0.2em (h_{(2)}\cdot v)\otimes (h_{(3)}aS^{-1}(h_{(1)}))_{(1)} \\ 
&=\sum \langle p,\ h_{(5)}
a_{(2)}S^{-1}(h_{(1)})\rangle \kern0.2em (h_{(3)}\cdot v)\otimes (h_{(4)}a_{(1)}S^{-1}(h_{(2)})) 
\end{align*}
for all $p\in A^{\ast},\ h,a\in A,\ v\in V$. 
In particular, when $V$ is the trivial $A$-module $\boldsymbol{k}$, 
\begin{align*}
(p\bowtie  h)\cdot (1\otimes a)
&=\sum \langle p,\ h_{(4)}
a_{(2)}S^{-1}(h_{(1)})\rangle \kern0.2em 1\otimes (h_{(3)}a_{(1)}S^{-1}(h_{(2)})) \\ 
&=\sum \langle S^{-1}(p),\ h_{(1)}
S(a_{(2)})S(h_{(4)})\rangle \kern0.2em 1\otimes S^{-1}(h_{(2)}S(a_{(1)})S(h_{(3)})). 
\end{align*}
So, identifying $\boldsymbol{k}\otimes H=H$, we have 
$$
(p\bowtie  h)\cdot a=
\sum \langle S^{-1}(p),\ h_{(1)}
S(a_{(2)})S(h_{(4)})\rangle \kern0.2em  S^{-1}(h_{(2)}S(a_{(1)})S(h_{(3)})).$$
This is equivalent to 
\begin{equation}\label{eq2-8}
S((p\bowtie  h)\cdot  S^{-1}(a))=\sum \langle S^{-1}(p),\ h_{(1)}
a_{(1)}S(h_{(4)})\rangle \kern0.2em  h_{(2)}a_{(2)}S(h_{(3)}). 
\end{equation}
The right-hand side of the above equation coincides with the left action of the Schr\"{o}dinger module  ${}_{D(A)}A$. 
\end{proof} 

\par \medskip 
As in a similar to Lemma~\ref{2-1}, the following holds. 

\par \smallskip \noindent 
\begin{lem}[{\bf Radford\cite[Proposition 1]{Radford3}, Hu and Zhang\cite[Remark 2.2]{HuZhang}}]\label{2-4}
Let $A$ be a bialgebra over $\boldsymbol{k}$, and suppose that $A^{{\rm op}}$ has antipode $\overline{S}$. 
Let $N\in \Mod^A$. Then 
$A\otimes N\in {}_A\mathcal{YD}^A$, where 
the left $A$ action $\cdot $ and the right $A$-coaction $\rho $ are given by 
\begin{align*}
h\cdot (a\otimes n)&=ha\otimes n,\\ 
\rho (h\otimes n)&=\sum (h_{(2)}\otimes n_{(0)})\otimes h_{(3)}n_{(1)}\overline{S}(h_{(1)})
\end{align*}
for all $h,a\in A$ and $n\in N$. 
The correspondence $N \mapsto A\otimes N$ is extended to a functor 
$I^A: \Mod^A \longrightarrow {}_A\mathcal{YD}^A$ that  is a left adjoint of the forgetful functor $R^A$.  
The functor $I^A$ will be referred to as Radford's induction functor derived from right $A$-comodules. 
\end{lem}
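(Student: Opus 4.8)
The plan is to mirror the proof of Lemma~\ref{2-1}, interchanging the roles played by the module and comodule structures. Throughout I will use that $\overline{S}$, being the antipode of $A^{\mathrm{op}}$, is both an anti-algebra and an anti-coalgebra map of $A$, and satisfies $\sum h_{(2)}\overline{S}(h_{(1)}) = \varepsilon(h)1_A = \sum \overline{S}(h_{(2)})h_{(1)}$ for all $h \in A$. The argument has four parts: $A \otimes N$ is an object of ${}_A\mathcal{YD}^A$, the construction is functorial, and $I^A$ is left adjoint to $R^A$.

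First I would check that $A \otimes N$ is a genuine object of ${}_A\mathcal{YD}^A$. That $h \cdot (a \otimes n) = ha \otimes n$ defines a left $A$-module is immediate from associativity of the product of $A$. For the coaction $\rho$, the counit axiom follows at once from $\varepsilon \circ \overline{S} = \varepsilon$ together with the counit axiom for the $N$-coaction, while coassociativity is a direct Sweedler computation: expanding $(\rho \otimes \mathrm{id})\rho$ and $(\mathrm{id} \otimes \Delta)\rho$ on $h \otimes n$ and using coassociativity of $\Delta_A$, coassociativity of the $N$-coaction, and the anti-comultiplicativity of $\overline{S}$, both sides reduce to the common expression $\sum h_{(3)} \otimes n_{(0)} \otimes h_{(4)} n_{(1)} \overline{S}(h_{(2)}) \otimes h_{(5)} n_{(2)} \overline{S}(h_{(1)})$.

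Next comes the main point, the Yetter--Drinfel'd compatibility (\ref{eq2-7}) for $A \otimes N$. Substituting the two structures into (\ref{eq2-7}), the left-hand side is readily seen to equal $\sum (h_{(1)}a_{(2)} \otimes n_{(0)}) \otimes h_{(2)} a_{(3)} n_{(1)} \overline{S}(a_{(1)})$. On the right-hand side one applies the coaction to $h_{(2)}a \otimes n$, uses the anti-multiplicativity $\overline{S}((h_{(2)}a)_{(1)}) = \overline{S}(a_{(1)})\overline{S}(h_{(2)})$, and then collapses the resulting factor by the antipode identity $\sum \overline{S}(h_{(2)})h_{(1)} = \varepsilon(h)1_A$; after the collapse the right-hand side becomes identical to the left-hand side. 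This is the step where the hypothesis on $\overline{S}$ is essential, and I expect it to be the main obstacle in the verification, because of the care required in tracking the coproduct legs of $h$. Functoriality is then routine: a comodule map $f\colon N \to N'$ induces $I^A(f) = \mathrm{id}_A \otimes f$, which one checks directly is both a module and a comodule map, hence a morphism in ${}_A\mathcal{YD}^A$.

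Finally I would establish the adjunction by exhibiting a natural isomorphism
\[
\mathrm{Hom}_{{}_A\mathcal{YD}^A}(A \otimes N, M) \;\cong\; \mathrm{Hom}_{\Mod^A}(N, R^A(M)).
\]
In one direction, send a Yetter--Drinfel'd map $g$ to $n \mapsto g(1_A \otimes n)$; since $\rho(1_A \otimes n) = \sum (1_A \otimes n_{(0)}) \otimes n_{(1)}$, this restriction is a comodule map. In the other direction, send a comodule map $f\colon N \to R^A(M)$ to $\widetilde{f}(a \otimes n) = a \cdot f(n)$. That $\widetilde{f}$ is a module map is clear; that it is a comodule map is exactly where the Yetter--Drinfel'd axiom of $M$ re-enters: rewriting (\ref{eq2-7}) for $M$ by applying the coaction to $a_{(2)} \cdot m$ and multiplying its second tensor leg on the right by $\overline{S}(a_{(1)})$, the identity $\sum a_{(2)}\overline{S}(a_{(1)}) \otimes a_{(3)} = 1_A \otimes a$ yields $\rho_M(a \cdot m) = \sum (a_{(2)} \cdot m_{(0)}) \otimes a_{(3)} m_{(1)} \overline{S}(a_{(1)})$, and feeding in $m = f(n)$ together with the fact that $f$ preserves coactions makes $\rho_M(\widetilde{f}(h \otimes n))$ agree with $(\widetilde{f} \otimes \mathrm{id})\rho_{A \otimes N}(h \otimes n)$. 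The two assignments are mutually inverse, since $\widetilde{f}(1_A \otimes n) = f(n)$ and, conversely, $a \otimes n = a \cdot (1_A \otimes n)$ with $g$ a module map forces $g(a \otimes n) = a \cdot g(1_A \otimes n)$; naturality in $N$ and $M$ is straightforward. Hence $I^A$ is left adjoint to $R^A$, completing the proof.
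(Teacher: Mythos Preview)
Your proof is correct. The paper does not give its own proof of this lemma: it is stated with attribution to Radford \cite[Proposition~1]{Radford3} and Hu--Zhang \cite[Remark~2.2]{HuZhang}, prefaced only by the remark that it holds ``as in a similar to Lemma~\ref{2-1}''. Your argument supplies exactly the direct verification that this remark gestures at --- checking the Yetter--Drinfel'd compatibility for $A\otimes N$ via the collapse $\sum \overline{S}(h_{(2)})h_{(1)}=\varepsilon(h)1$, and then exhibiting the adjunction bijection $g\mapsto g(1_A\otimes -)$, $f\mapsto \bigl(a\otimes n\mapsto a\cdot f(n)\bigr)$, which is the mirror image of the isomorphism~(\ref{eq2-9}) recorded after Lemma~\ref{2-1}. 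So your approach is precisely the one the paper implicitly invokes, just written out in full.
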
 

\par \smallskip 
Let $A$ be a finite-dimensional Hopf algebra over $\boldsymbol{k}$. 
The Hopf algebra embedding $\jmath : A^{\ast{\rm cop}}\hookrightarrow D(A)$ defined by $\jmath (p)=p\bowtie  1_A$ induces a monoidal functor ${}_{D(A)}\Mod \longrightarrow {}_{A^{\ast{\rm cop}}}\Mod\cong (\Mod^A)^{{\rm rev}}$. 
Here, $(\Mod^A)^{{\rm rev}}$ means the reversed monoidal category of $\Mod^A$. 
Under the identification ${}_A\mathcal{YD}^A={}_{D(A)}\Mod$, 
the above functor ${}_{D(A)}\Mod \longrightarrow (\Mod^A)^{{\rm rev}}$ coincides with the forgetful functor $R^A$ given in Lemma~\ref{2-4}. 
So, we denote this functor by $R^A$, again. 
By Lemmas~\ref{lem:adj-transfer} and \ref{2-4}, we see that  
the  Radford's induction  functor 
$I^A: (\Mod^A)^{{\rm rev}} \longrightarrow {}_A\mathcal{YD}^A= 
{}_{D(A)}\Mod$ is a left adjoint of $R^A: {}_{D(A)}\Mod\longrightarrow (\Mod^A)^{{\rm rev}}$. 

\par \smallskip 
Let $A$ be a finite-dimensional Hopf algebra over $\boldsymbol{k}$. 
Then the bialgebra $A^{{\rm op}}$ has an antipode since the antipode $S$ of $A$ is bijective \cite[Theorem 1]{Ra0}. 
Let $M=(M, \cdot , \rho _M)$ be a finite-dimensional Yetter-Drinfel'd $A$-module. 
Then the dual vector space $M^{\ast}$ becomes a Yetter-Drinfel'd $A$-module with respect to the following action $\cdot $ and coaction $\rho _{M^{\ast}}$: 
\begin{align*}
(a\cdot \alpha )(m)&=\alpha (S(a)\cdot m)\qquad (a\in A, \ \alpha \in M^{\ast},\ m\in M),\\ 
\rho _{M^{\ast}}(e_j^{\ast})&=\sum\limits_{i=1}^d e_i^{\ast}\otimes S^{-1}(a_{ji})\qquad (j=1,\dots ,d), 
\end{align*}

\noindent 
where $\{ e_i\}_{i=1}^d$ is a basis for $M$ and $\{ e_i^{\ast}\}_{i=1}^d$ is its dual basis, 
and $\rho _M(e_j)=\sum_{i=1}^d e_i\otimes a_{ij}$ for $j=1,\dots , d$. 

\par \smallskip \noindent 
\begin{prop}\label{2-5}
Let $A$ be a finite-dimensional Hopf algebra over $\boldsymbol{k}$. 
Then, the $\boldsymbol{k}$-linear map
 $\Phi : {}_{D(A)}A^{\ast{\rm cop}}\longrightarrow \bigl( I^A(\boldsymbol{k})\bigr)^{\ast}$ 
 defined by $\Phi (q)=S^{-1}(q)\otimes 1$ for all $q\in A^{\ast}$ is an isomorphism of left $D(A)$-modules. 
Here, $\boldsymbol{k}$ in $I^A(\boldsymbol{k})$ means the trivial right $A$-comodule. 
\end{prop}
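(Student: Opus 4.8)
The plan is to imitate the proof of Proposition~\ref{2-3}: compute the left $D(A)$-module structure of $(I^A(\boldsymbol{k}))^{\ast}$ in closed form, and then check that $\Phi$ intertwines it with the dual Schr\"odinger action $(\ref{eq2-6})$. Since the antipode $S$ of $A$ is bijective, so is the antipode of $A^{\ast}$, and $\Phi$ is the composite of $S_{A^{\ast}}^{-1}\colon A^{\ast}\to A^{\ast}$ with the canonical identification $A^{\ast}\cong A^{\ast}\otimes\boldsymbol{k}=(I^A(\boldsymbol{k}))^{\ast}$; in particular $\Phi$ is a $\boldsymbol{k}$-linear isomorphism, so the only point is its $D(A)$-linearity.

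First I would make the Yetter--Drinfel'd structure of $I^A(\boldsymbol{k})$ explicit. The trivial right $A$-comodule $\boldsymbol{k}$ has coaction $1\mapsto 1\otimes 1_A$, and since $\overline{S}=S^{-1}$, Lemma~\ref{2-4} identifies $I^A(\boldsymbol{k})$ with $A$ carrying the left action $h\cdot g=hg$ and the coaction $\rho(g)=\sum g_{(2)}\otimes g_{(3)}S^{-1}(g_{(1)})$. Dualizing by the formulas recalled immediately before the statement, the Yetter--Drinfel'd dual $(I^A(\boldsymbol{k}))^{\ast}$ is $A^{\ast}$ equipped with the $A$-action $(a\cdot\alpha)(g)=\alpha(S(a)g)$ and the coaction obtained by transposing $\rho$ through $S^{-1}$. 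Feeding this pair into $(p\bowtie a)\cdot\alpha=\sum\langle p,(a\cdot\alpha)_{(1)}\rangle(a\cdot\alpha)_{(0)}$ then gives the left $D(A)$-action on $(I^A(\boldsymbol{k}))^{\ast}$.

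With both module structures in hand, the task reduces to verifying the single identity $\Phi\bigl((p\bowtie a)\bullet q\bigr)=(p\bowtie a)\cdot\Phi(q)$ for all $p,q\in A^{\ast}$ and $a\in A$. Unwinding the definitions of $\Phi$ and of the dual Schr\"odinger action $(\ref{eq2-6})$, and then evaluating both $A^{\ast}$-valued sides on an arbitrary $g\in A$, this becomes a scalar identity in which the left-hand side is built from $a\rightharpoonup q$ and $q\blacktriangleleft S^{-1}(p)$ via $(\ref{eq2-3})$ and $(\ref{eq2-4})$, while the right-hand side is the pairing of $p$ against the dualized coaction applied to $S_{A^{\ast}}^{-1}(q)$. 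It is the exact analogue of equation $(\ref{eq2-8})$.

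The main obstacle is purely the antipode bookkeeping. The coaction of $I^A(\boldsymbol{k})$ already carries one $S^{-1}$, dualizing inserts another $S$, and the pairing against $p$ together with the twist $S_{A^{\ast}}^{-1}$ built into $\Phi$ contribute further antipodes; the crux is to reorganize these Sweedler factors so that the adjoint part of the coaction of $I^A(\boldsymbol{k})$ matches the right adjoint action $\blacktriangleleft$ of $(\ref{eq2-3})$, while the left multiplication $A$-action dualizes to the $\rightharpoonup$-action of $(\ref{eq2-4})$, the $S_{A^{\ast}}^{-1}$ in $\Phi$ absorbing the residual discrepancy. Once both sides are brought to a common normal form the verification is routine, if lengthy, exactly as in Proposition~\ref{2-3}.
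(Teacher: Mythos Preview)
Your proposal is correct and follows essentially the same approach as the paper: compute the Yetter--Drinfel'd (hence $D(A)$-module) structure on $(I^A(\boldsymbol{k}))^{\ast}$ explicitly and verify that $S_{A^{\ast}}^{-1}$ intertwines it with the dual Schr\"odinger action, culminating in the identity $S\bigl((p\bowtie h)\cdot S^{-1}(q)\bigr)=\sum\langle q_{(2)},h\rangle\,p_{(2)}q_{(1)}S^{-1}(p_{(1)})$ (equation~(\ref{eq2-10}) in the paper). The only cosmetic difference is that the paper first carries out the dualization for a general finite-dimensional comodule $W$ and then specializes to $W=\boldsymbol{k}$, whereas you go directly to the trivial comodule.
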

\begin{proof}
Let $W$ be a finite-dimensional right $A$-comodule. 
Applying Radford's induction functor to  the dual $A$-comodule $W^{\ast}$, 
we have a Yetter-Drinfel'd $A$-module $I^A(W^{\ast})$ $=A\otimes W^{\ast}$. 
As mentioned before Proposition~\ref{2-5}, 
the dual $\bigl( I^A(W^{\ast})\bigr)^{\ast}$ is also a Yetter-Drinfel'd $A$-module whose left action is given by 
\begin{equation}
(h\cdot f)(a\otimes \alpha ) 
=f(S(h)\cdot (a \otimes \alpha )) 
=f(S(h)a\otimes \alpha )
\end{equation}
for all $f \in (A\otimes W^{\ast})^{\ast},\ h, a \in H,\ \alpha \in W^{\ast}$. 
The coaction $\psi$ of $\bigl( I^A(W^{\ast})\bigr)^{\ast}$ is given as follows. 
Let $\{ x_s\}_{j=1}^r$ and $\{ e_i\}_{i=1}^d$ be bases for $A$ and $W$, respectively. 
Let $\{ f_{si}\}_{\substack{1\leq s\leq r \\ 1\leq i\leq d}}$ be the dual basis for $\{ x_s\otimes e_i^{\ast}\} _{\substack{1\leq s\leq r \\ 1\leq i\leq d}}$, that is, 
$f_{si}\in  (A\otimes W^{\ast})^{\ast}$ and 
$f_{si}(x_t\otimes e_j^{\ast})=\delta _{s,t}\delta _{i,j}$ for all $i,j=1,\dots , d$ and $s, t=1,\dots , r$. 
Then, 
$$\psi (f_{si})
=\sum\limits_{t=1}^r\sum\limits _{j=1}^d f_{tj}\otimes S^{-1}(a_{(s,i), (t,j)}),$$
where $a_{(t,j), (s,i)}\in A$ is defined by 
$\rho (x_s\otimes e_i^{\ast})=\sum_{j=1}^d\sum_{t=1}^r (x_t\otimes e_j^{\ast})\otimes a_{(t,j), (s,i)}$. 
Let $\rho_W$ be the coaction of $W$, and write $\rho_W(e_j)=\sum _{i=1}^d e_i\otimes a_{ij}\ (a_{ij}\in A)$. Since 
\begin{align*}
\rho (x_s\otimes e_i^{\ast})
&=\sum ((x_s)_{(2)}\otimes e_j^{\ast})\otimes (x_s)_{(3)}S^{-1}(a_{ij})S^{-1}((x_s)_{(1)}) \\ 
&=\sum \langle (x_s)_{(2)}, x_t^{\ast}\rangle (x_t\otimes e_j^{\ast})\otimes (x_s)_{(3)}S^{-1}((x_s)_{(1)}a_{ij})
\end{align*}
by Lemma~\ref{2-4}(1), we see that 
$a_{(t,j), (s,i)}=\sum \langle (x_s)_{(2)}, x_t^{\ast}\rangle (x_s)_{(3)}S^{-1}((x_s)_{(1)}a_{ij})$, 
and whence 
\begin{align*}
\psi (f_{si})
&=\sum\limits_{t=1}^r\sum\limits_{j=1}^d \langle (x_t)_{(2)}, x_s^{\ast}\rangle \kern0.2em 
f_{tj}\otimes S^{-1}\bigl( (x_t)_{(3)}S^{-1}((x_t)_{(1)}a_{ji})\bigr) .
\end{align*}

Under the canonical identification $(A\otimes W^{\ast})^{\ast}\cong W^{\ast\ast }\otimes A^{\ast}\cong W\otimes A^{\ast}$ as vector spaces, 
we regard $W\otimes A^{\ast}$ as a Yetter-Drinfel'd $A$-module. 
Then, from the above observation it follows that the action and coaction on $W\otimes A^{\ast}$ are given by 
\begin{align*}
a\cdot (w\otimes p)
&=w\otimes (a\cdot p), \\ 
\psi (w\otimes p)
&=\sum \limits_{t=1}^r \langle p, (x_t)_{(2)}\rangle (w_{(0)}\otimes x_t^{\ast})\otimes  S^{-1}\bigl( (x_t)_{(3)}S^{-1}((x_t)_{(1)}w_{(1)})\bigr)
\end{align*}
for all $a\in A, w\in W,\ p\in A^{\ast}$, where 
$a\cdot p$ is the action of $A$ on $A^{\ast}$ as the dual $A$-module of the left regular $A$-module. 
\par 
Considering the case where $W$ is the trivial right $A$-comodule $\boldsymbol{k}$, 
we have a Yetter-Drinfel'd $A$-module structure on $A^{\ast}$ whose action and coaction are given by 
\begin{align*}
a\cdot q
&=\sum\limits_{t=1}^r \langle a\cdot q, x_t\rangle x_t^{\ast} 
=\sum\limits_{t=1}^r \langle q, S(a)x_t\rangle x_t^{\ast} 
=\sum \langle q_{(1)}, S(a)\rangle q_{(2)}, \\ 
\psi (p)&=\sum \limits_{t=1}^r \langle p, (x_t)_{(2)}\rangle x_t^{\ast}\otimes  S^{-1}\bigl( (x_t)_{(3)}S^{-1}((x_t)_{(1)})\bigr).
\end{align*}

So, the corresponding left $D(A)$-module structure on $A^{\ast}$ is given by 
\begin{align*}
(p\bowtie h)\cdot q
&=p\cdot (h\cdot q) \\ 
&=\sum \sum \limits_{t=1}^r \langle h\cdot q, (x_t)_{(2)}\rangle \langle p, S^{-1}\bigl( (x_t)_{(3)}S^{-1}((x_t)_{(1)})\bigr) \rangle 
x_t^{\ast} \displaybreak[0]\\ 
&=\sum \sum \limits_{t=1}^r \langle h\cdot q, (x_t)_{(2)}\rangle \langle S^{-1}(p_{(2)}),  (x_t)_{(3)}\rangle \langle 
S^{-2}(p_{(1)}),  (x_t)_{(1)}\rangle x_t^{\ast} \\ 
&=\sum \sum \limits_{t=1}^r \langle S^{-2}(p_{(1)})(h\cdot q)S^{-1}(p_{(2)}) , x_t\rangle x_t^{\ast} \displaybreak[0]\\ 
&=\sum \langle q_{(1)}, S(h)\rangle S^{-2}(p_{(1)}) q_{(2)}S^{-1}(p_{(2)})
\end{align*}
for all $p,q\in A^{\ast}$ and $a\in A$. 
Therefore, the left $D(A)$ action on $A^{\ast}\cong \bigl(I^A(\boldsymbol{k}^{\ast})\bigr) ^{\ast}$ satisfies the equation 
\begin{equation}\label{eq2-10}
S((p\bowtie h)\cdot S^{-1}(q))=\sum \langle q_{(2)}, h\rangle p_{(2)} q_{(1)}
S^{-1}(p_{(1)}).
\end{equation}
The right-hand side coincides with the left $D(A)$ action on the dual Schr\"{o}dinger module ${}_{D(A)}A^{\ast{\rm cop}}$. 
Since $\boldsymbol{k}^{\ast}\cong \boldsymbol{k}$ as right $A$-comodules, the equation (\ref{eq2-10}) gives rise to an isomorphism 
$\bigl(I^A(\boldsymbol{k})\bigr) ^{\ast}\cong {}_{D(A)}A^{\ast{\rm cop}}$. 
\end{proof}

\subsection{Tensor products of Schr\"{o}dinger modules}

In this subsection, we compute the tensor product of Schr\"{o}dinger modules by using Propositions~\ref{2-3} and~\ref{2-5}. 
First, we provide the following lemma, which is proved straightforwardly.

\begin{lem}\label{2-10}
  Let $A$ be a Hopf algebra over $\boldsymbol{k}$ with bijective antipode. Then:
  \begin{itemize}
  \item [(1)] There is a natural isomorphism of Yetter-Drinfel'd modules
    \begin{equation*}
      \Phi: I_A(V) \otimes M \longrightarrow I_A(V \otimes R_A(M))
      \quad (V \in {}_A\Mod, M \in {}_A\mathcal{YD}^A)
    \end{equation*}
    given by $\Phi(v \otimes a \otimes m) = \sum v \otimes m_{(0)} \otimes m_{(1)} a$ for $v \in V$, $a \in A$ and $m \in M$.
  \item [(2)] There is a natural isomorphism of Yetter-Drinfel'd modules
    \begin{equation*}
      \Psi: I^A(V \otimes R^A(M)) \longrightarrow I^A(V) \otimes M
      \quad (V \in \Mod^A, M \in {}_A\mathcal{YD}^A)
    \end{equation*}
    given by $\Psi(a \otimes v \otimes m) = \sum a_{(1)} \otimes v \otimes a_{(2)} m$ for $v \in V$, $a \in A$ and $m \in M$.
  \end{itemize}
\end{lem}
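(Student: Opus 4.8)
The plan is to check directly, for each part, that the stated formula defines a morphism in ${}_A\mathcal{YD}^A$ which is invertible and natural in both arguments. I will carry out (1) and treat (2) as its mirror image, since the forgetful and induction functors involved are formally dual.

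For (1), the first step is to unwind the two Yetter-Drinfel'd structures. On the source $I_A(V)\otimes M=(V\otimes A)\otimes M$ the action and coaction are those of the tensor product in ${}_A\mathcal{YD}^A$ applied to the $I_A$-structure of Lemma~\ref{2-1}, while on the target $I_A(V\otimes R_A(M))=(V\otimes M)\otimes A$ they are the $I_A$-structure applied to the diagonal $A$-module $V\otimes R_A(M)$. With these in hand, $\Phi$ must be shown to be $A$-colinear and $A$-linear. Colinearity is the routine half: applying the target coaction to $\Phi(v\otimes a\otimes m)$ and applying $\Phi\otimes\mathrm{id}$ to the source coaction both yield $\sum (v\otimes m_{(0)}\otimes m_{(1)}a_{(1)})\otimes m_{(2)}a_{(2)}$ after a single use of coassociativity of the coaction of $M$. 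Linearity is where the Yetter-Drinfel'd condition (\ref{eq2-7}) is the essential input: expanding $\Phi\bigl(h\cdot((v\otimes a)\otimes m)\bigr)$ produces the block $(h_{(4)}\cdot m)_{(0)}\otimes (h_{(4)}\cdot m)_{(1)}h_{(3)}$, which (\ref{eq2-7}) rewrites as $(h_{(3)}\cdot m_{(0)})\otimes h_{(4)}m_{(1)}$; after this substitution the expression becomes $\sum (h_{(2)}\cdot v)\otimes (h_{(3)}\cdot m_{(0)})\otimes h_{(4)}m_{(1)}a\overline{S}(h_{(1)})$, which is exactly $h\cdot\Phi(v\otimes a\otimes m)$ computed on the target. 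Bijectivity follows by exhibiting $\Phi^{-1}(v\otimes m\otimes b)=\sum v\otimes S(m_{(1)})b\otimes m_{(0)}$ and collapsing the two composites by the antipode axioms; naturality is immediate because $\Phi$ leaves $v$ untouched and commutes with any colinear $g\colon M\to M'$.

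For (2), the map $\Psi$ is $A$-linear almost for free, because the action on $I^A(-)$ and on the first tensor factor of $I^A(V)\otimes M$ are both left multiplication, so $\Psi\bigl(h\cdot(a\otimes v\otimes m)\bigr)$ and $h\cdot\Psi(a\otimes v\otimes m)$ agree after one application of coassociativity of $\Delta$. As in (1), the colinearity check is now the substantive one and again runs on (\ref{eq2-7}): the block $(a_{(4)}m)_{(0)}\otimes(a_{(4)}m)_{(1)}a_{(3)}$ produced by the target coaction is converted to $(a_{(3)}\cdot m_{(0)})\otimes a_{(4)}m_{(1)}$. The inverse is $\Psi^{-1}(a\otimes v\otimes m)=\sum a_{(1)}\otimes v\otimes S(a_{(2)})m$, and naturality is again formal.

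The one genuine pitfall---the point that needs care rather than the sheer bookkeeping of four-fold coproducts---is the monoidal convention on comodules in part (2). One must take $V\otimes R^A(M)$ with the coaction of $(\Mod^A)^{\mathrm{rev}}$, namely $\rho(v\otimes m)=\sum v_{(0)}\otimes m_{(0)}\otimes m_{(1)}v_{(1)}$, in order to match the Yetter-Drinfel'd tensor coaction $\sum x_{(0)}\otimes y_{(0)}\otimes y_{(1)}x_{(1)}$ on the target $I^A(V)\otimes M$; using the naive coaction of $\Mod^A$ instead would leave the uncancellable discrepancy $m_{(1)}v_{(1)}$ versus $v_{(1)}m_{(1)}$ in the final degree. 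Once this is fixed in accordance with the identification $I^A\colon(\Mod^A)^{\mathrm{rev}}\to{}_A\mathcal{YD}^A$ used above, the comodule identity and both naturality squares close, completing the proof.
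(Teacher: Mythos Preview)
Your proposal is correct. The paper itself does not give a proof of this lemma, merely saying it ``is proved straightforwardly,'' so your direct verification is exactly the kind of argument the authors have in mind; you simply carry it out in full. Your observation about the $(\Mod^A)^{\mathrm{rev}}$ convention in part~(2) is a genuine point that the paper leaves implicit, and it is good that you flagged it: without it the colinearity check fails exactly as you describe.
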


For a Hopf algebra $A$, we denote by $A_{\mathrm{ad}}$ the adjoint representation of $A$, \textit{i.e.}, the vector space $A$ endowed with the left $A$-module structure given by~\eqref{eq2-1}.

\begin{prop}
  \label{prop:Sch-mod-tensor}
  Let $A$ be a finite-dimensional Hopf algebra over $\boldsymbol{k}$. Then
  \begin{equation*}
    ({}_{D(A)}A)^{\otimes n} \cong I_A(A_{\mathrm{ad}}^{\otimes (n-1)}).
  \end{equation*}
\end{prop}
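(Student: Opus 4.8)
The plan is to argue by induction on $n$, using Proposition~\ref{2-3} for the base case and Lemma~\ref{2-10}(1) for the inductive step; the only computation I must carry out by hand is to identify the underlying left $A$-module of $I_A(\boldsymbol{k})$. For the base case $n = 1$, note that the empty tensor power $A_{\mathrm{ad}}^{\otimes 0}$ is the unit object of ${}_A\Mod$, namely the trivial module $\boldsymbol{k}$, so Proposition~\ref{2-3} gives ${}_{D(A)}A \cong I_A(\boldsymbol{k}) = I_A(A_{\mathrm{ad}}^{\otimes 0})$ as left $D(A)$-modules, which settles it.

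The key preliminary computation is the claim that $R_A(I_A(\boldsymbol{k})) \cong A_{\mathrm{ad}}$ as left $A$-modules, with the antipode supplying the isomorphism. By Lemma~\ref{2-1} applied to $L = \boldsymbol{k}$ the trivial module, $I_A(\boldsymbol{k}) = \boldsymbol{k} \otimes A \cong A$ carries the left $A$-action $h \cdot a = \sum h_{(2)}\, a\, S^{-1}(h_{(1)})$, since $\overline{S} = S^{-1}$ and the action on $\boldsymbol{k}$ is via $\varepsilon$. Using that $S^{-1}$ is an algebra anti-homomorphism, a direct check shows that $S^{-1} : A_{\mathrm{ad}} \to R_A(I_A(\boldsymbol{k}))$ intertwines the adjoint action $h \blacktriangleright a = \sum h_{(1)} a\, S(h_{(2)})$ of~\eqref{eq2-1} with the action above; as $S$ is bijective, $S^{-1}$ is the desired isomorphism of left $A$-modules, and tensoring with $\mathrm{id}_V$ gives $V \otimes R_A(I_A(\boldsymbol{k})) \cong V \otimes A_{\mathrm{ad}}$ for every $V \in {}_A\Mod$.

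For the inductive step I assume $({}_{D(A)}A)^{\otimes n} \cong I_A(A_{\mathrm{ad}}^{\otimes(n-1)})$ and compute, using the inductive hypothesis on the left factor and Proposition~\ref{2-3} on the right factor,
\begin{equation*}
  ({}_{D(A)}A)^{\otimes(n+1)} = ({}_{D(A)}A)^{\otimes n} \otimes {}_{D(A)}A
  \cong I_A\bigl(A_{\mathrm{ad}}^{\otimes(n-1)}\bigr) \otimes I_A(\boldsymbol{k}).
\end{equation*}
Applying Lemma~\ref{2-10}(1) with $V = A_{\mathrm{ad}}^{\otimes(n-1)}$ and $M = I_A(\boldsymbol{k})$ rewrites the right-hand side as $I_A\bigl(A_{\mathrm{ad}}^{\otimes(n-1)} \otimes R_A(I_A(\boldsymbol{k}))\bigr)$, and the key computation above identifies $R_A(I_A(\boldsymbol{k}))$ with $A_{\mathrm{ad}}$, yielding $I_A(A_{\mathrm{ad}}^{\otimes n})$. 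Since every map in this chain is an isomorphism of Yetter-Drinfel'd modules, i.e.\ of left $D(A)$-modules under the identification ${}_A\mathcal{YD}^A \cong {}_{D(A)}\Mod$, the induction closes.

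The only genuine obstacle is the key computation, and within it the bookkeeping of the antipode twist: $R_A(I_A(\boldsymbol{k}))$ is $A$ equipped with the shifted action $h \cdot a = \sum h_{(2)} a\, S^{-1}(h_{(1)})$, which matches the standard adjoint representation $A_{\mathrm{ad}}$ only after transporting along $S^{-1}$. Everything else is a formal consequence of Lemma~\ref{2-10}(1), Proposition~\ref{2-3}, and the associativity of the tensor product.
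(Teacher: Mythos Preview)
Your proof is correct and uses the same two ingredients as the paper (Proposition~\ref{2-3} and Lemma~\ref{2-10}(1)), but you have taken a slightly longer path than necessary. The paper peels off the \emph{leftmost} tensor factor rather than the rightmost one: writing $({}_{D(A)}A)^{\otimes n} \cong I_A(\boldsymbol{k}) \otimes ({}_{D(A)}A)^{\otimes(n-1)}$ and applying Lemma~\ref{2-10}(1) with $V=\boldsymbol{k}$ and $M=({}_{D(A)}A)^{\otimes(n-1)}$ gives $I_A\bigl(R_A(({}_{D(A)}A)^{\otimes(n-1)})\bigr)$ in one step, with no induction. The remaining identification $R_A({}_{D(A)}A)=A_{\mathrm{ad}}$ is then immediate from the very definition~\eqref{eq2-5} of the Schr\"{o}dinger action, since its restriction to $A$ \emph{is} the adjoint action~\eqref{eq2-1}; no antipode twist is needed. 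Your detour through $R_A(I_A(\boldsymbol{k}))$ and the $S^{-1}$ intertwiner is correct but avoidable: it arises only because you first replaced ${}_{D(A)}A$ by $I_A(\boldsymbol{k})$ via Proposition~\ref{2-3} (which already involves $S^{-1}$) before restricting, rather than restricting the Schr\"{o}dinger module directly.
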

\begin{proof}
By Proposition~\ref{2-3} and Lemma~\ref{2-10},
  \begin{equation*}
    ({}_{D(A)}A)^{\otimes n}
    \cong I_A(\boldsymbol{k}) \otimes ({}_{D(A)} A)^{\otimes (n - 1)}
    \cong (I_A \circ R_A)(({}_{D(A)} A)^{\otimes (n - 1)})
    \cong I_A(A_{\mathrm{ad}}^{\otimes (n-1)}). \qedhere
  \end{equation*}
\end{proof}

The following result is a non-semisimple generalization of a part of \cite[Proposition 4]{Burciu}.

\begin{prop}
  $({}_{D(A)}A) \otimes ({}_{D(A)}A^{\ast{\rm cop}}) \cong D(A)$ as left $D(A)$-modules.
\end{prop}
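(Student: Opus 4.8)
The plan is to combine the tensor-product formula of Proposition~\ref{prop:Sch-mod-tensor} with the duality Proposition~\ref{2-5}, realizing the left-hand side as an image under Radford's induction functor and then identifying that image with the regular $D(A)$-module. First I would rewrite the two tensor factors via the categorical descriptions already established: by Proposition~\ref{2-3} we have ${}_{D(A)}A \cong I_A(\boldsymbol{k})$, and by Proposition~\ref{2-5} we have ${}_{D(A)}A^{\ast{\rm cop}} \cong (I^A(\boldsymbol{k}))^{\ast}$. The key move is to use Lemma~\ref{2-10}(1) with $V = \boldsymbol{k}$ and $M = {}_{D(A)}A^{\ast{\rm cop}}$, giving
\begin{equation*}
  ({}_{D(A)}A) \otimes ({}_{D(A)}A^{\ast{\rm cop}})
  \cong I_A(\boldsymbol{k}) \otimes M
  \cong I_A(\boldsymbol{k} \otimes R_A(M))
  \cong I_A(R_A(M)),
\end{equation*}
so the problem reduces to computing the underlying left $A$-module $R_A({}_{D(A)}A^{\ast{\rm cop}})$ and then evaluating $I_A$ on it.

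Next I would identify the restricted module $R_A({}_{D(A)}A^{\ast{\rm cop}})$ explicitly. Restricting the $D(A)$-action of~\eqref{eq2-6} along the embedding $A \hookrightarrow D(A)$, $a \mapsto \varepsilon_A \bowtie a$, leaves only the $\rightharpoonup$-action, so $R_A({}_{D(A)}A^{\ast{\rm cop}})$ is $A^{\ast}$ with the left $A$-action $a \cdot q = a \rightharpoonup q = \sum q_{(1)} \langle q_{(2)}, a\rangle$. I would then recognize this as a standard module: it is the left $A$-module obtained from the right regular $A^{\ast}$-module structure, or equivalently the left coregular action, which is well known to be free of rank one over $A$ when $A$ is finite-dimensional. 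Concretely, the map $A \to A^{\ast}$ sending $a \mapsto a \rightharpoonup \lambda$ for a suitable cointegral/dual-basis element, or more cleanly the identification of $A^{\ast}$ with $A_{\mathrm{ad}}$-type data, should exhibit $R_A({}_{D(A)}A^{\ast{\rm cop}})$ as isomorphic to $A$ with the left regular action (possibly twisted by $S$). The cleanest route is probably to check directly that $A^{\ast}$ with the $\rightharpoonup$-action is isomorphic to the left regular $A$-module $A$; this is the non-semisimple heart of the argument and corresponds to the freeness input from \cite[Proposition 4]{Burciu}.

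Finally I would apply $I_A$ to this identification. Once $R_A({}_{D(A)}A^{\ast{\rm cop}}) \cong A$ (the left regular module) is established, I get
\begin{equation*}
  ({}_{D(A)}A) \otimes ({}_{D(A)}A^{\ast{\rm cop}})
  \cong I_A(A) = A \otimes A
\end{equation*}
as a Yetter-Drinfel'd module, and hence as a $D(A)$-module under the identification ${}_A\mathcal{YD}^A \cong {}_{D(A)}\Mod$. It then remains to check that $I_A$ applied to the free rank-one $A$-module yields the regular $D(A)$-module $D(A)$; this should follow because $D(A) = A^{\ast{\rm cop}} \otimes A$ as a vector space, and the induced $D(A)$-action from Lemma~\ref{2-1} (with $L = A$) matches the left regular action of $D(A)$ on itself after the dictionary $\boldsymbol{k}$-linear iso $A^{\ast} \otimes A \cong D(A)$ is applied. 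The main obstacle I expect is the second step: verifying in the non-semisimple setting that the coregular $\rightharpoonup$-action makes $A^{\ast}$ free of rank one over $A$ (rather than merely when $A$ is semisimple), and tracking the antipode twists so that the induced action genuinely coincides with the left regular representation of $D(A)$; the bookkeeping in matching the Yetter-Drinfel'd structure of $I_A(A)$ with the coalgebra/algebra structure of $D(A)$ is where the calculation is most delicate.
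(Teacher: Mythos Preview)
Your approach is correct but takes a different route than the paper. The paper gives a pure Yoneda argument: using ${}_{D(A)}A \cong I_A(\boldsymbol{k})$, ${}_{D(A)}A^{\ast{\rm cop}} \cong I^A(\boldsymbol{k})^*$, the duality adjunction, the adjunction $R_A \dashv I_A$, and the immediate fact $(R_A \circ I^A)(\boldsymbol{k}) \cong A$ from Lemma~\ref{2-4}, one shows directly that ${\rm Hom}_{D(A)}(X, ({}_{D(A)}A) \otimes ({}_{D(A)}A^{\ast{\rm cop}})) \cong X^*$ naturally in $X$, and likewise for $D(A)$ via the standard isomorphism ${\rm Hom}_H(X, Y \otimes H) \cong {\rm Hom}_{\boldsymbol{k}}(X, Y)$. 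Your reduction via Lemma~\ref{2-10} to $I_A\bigl(R_A({}_{D(A)}A^{\ast{\rm cop}})\bigr)$ is fine, and your worry about the non-semisimple case is unfounded: $(A^*, \rightharpoonup)$ is isomorphic to the left regular $A$-module for \emph{any} finite-dimensional Hopf algebra, since such algebras are Frobenius (Larson--Sweedler) and $a \mapsto a \rightharpoonup \lambda$ is an $A$-linear bijection for any nonzero integral $\lambda \in A^*$. Your final step $I_A(A) \cong D(A)$ is also true, but the direct action-matching you propose is unpleasant; the clean proof is again Yoneda, via ${\rm Hom}_{D(A)}(X, I_A(A)) \cong {\rm Hom}_A(R_A(X), A) \cong X^*$, so your route ultimately converges with the paper's. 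The paper's path is more economical---a single Yoneda computation rather than a reduction followed by one---while yours has the virtue of producing an explicit intermediate description of the tensor product as an induced module.
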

\begin{proof}
  Let, in general, $H$ be a finite-dimensional Hopf algebra. As is well-known, there are natural isomorphisms of vector spaces
  \begin{equation}
    \label{eq:prop-2-11-proof-1}
    {\rm Hom}_H(X \otimes H, Y) \cong {\rm Hom}_{\boldsymbol{k}}(X, Y) \cong {\rm Hom}_H(X, Y \otimes H)
    \quad (X, Y \in {}_H \boldsymbol{\sf M}).
  \end{equation}
  Since $(R_A\circ I^A)(\boldsymbol{k}) \cong A$ as left $A$-modules, there are natural isomorphisms of vector spaces
  \begin{align*}
    {\rm Hom}_{D(A)}(X, ({}_{D(A)}A) \otimes ({}_{D(A)}A^{\ast{\rm cop}}))
    & \cong {\rm Hom}_{D(A)}(X, I_A(\boldsymbol{k}) \otimes I^A(\boldsymbol{k})^*) \\
    & \cong {\rm Hom}_{D(A)}(X \otimes I^A(\boldsymbol{k}), I_A(\boldsymbol{k})) \\
    & \cong {\rm Hom}_{A}(R_A(X \otimes I^A(\boldsymbol{k})), \boldsymbol{k}) \\
    & \cong {\rm Hom}_{A}(R_A(X) \otimes A, \boldsymbol{k}) \\
    & \cong R_A(X)^{\ast} = X^{\ast}
    \quad \text{(by~(\ref{eq:prop-2-11-proof-1}))}
  \end{align*}
  for $X \in {}_{D(A)} \boldsymbol{\sf M}$. On the other hand, ${\rm Hom}_{D(A)}(X, D(A)) \cong X^{\ast}$ by~(\ref{eq:prop-2-11-proof-1}). Hence the result follows from Yoneda's lemma.
\end{proof}

\section{Categorical aspects of Schr\"{o}dinger modules}

\subsection{The center of a monoidal category}

We first recall the center construction for monoidal categories, which was introduced by Drinfel'd (see Joyal and Street \cite{JS2} and Majid \cite{Majid2}).
Let $\mathcal{C}=(\mathcal{C}, \otimes, \mathbb{I}, a, r, l)$ be a monoidal category, 
and let $V \in \mathcal{C}$ be an object. A \textit{half-braiding} for $V$ is a natural isomorphism $c_{-,V}: (-) \otimes V \longrightarrow V \otimes (-)$ such that the diagram
\begin{equation*}
  \begin{CD}
    (X \otimes Y) \otimes V
    @>{c_{X \otimes Y, V}}>{}>
    V \otimes (X \otimes Y)
    @<{a_{V,X,Y}^{-1}}<{}<
    (V \otimes X) \otimes Y \\
    @V{a_{X,Y,V}}V{}V @.
    @A{}A{c_{X,V} \otimes \mathrm{id}_Y}A \\
    X \otimes (Y \otimes V)
    @>>{\mathrm{id}_X \otimes c_{Y,V}}>
    X \otimes (V \otimes Y)
    @>>{a_{X,V,Y}^{-1}}>
    (X \otimes V) \otimes Y
  \end{CD}
\end{equation*}
commutes for all $X, Y \in \mathcal{C}$. 
The {\em center} of $\mathcal{C}$, denoted by $\mathcal{Z}(\mathcal{C})$, is the category defined as follows: 
An object of $\mathcal{Z}(\mathcal{C})$ is a pair $(V, c_{-,V})$ consisting of an object $V \in \mathcal{C}$ and a half braiding $c_{-,V}$ for $V$. 
If $(V, c_{-,V})$ and $(W, c_{-,W})$ are objects of $\mathcal{Z}(\mathcal{C})$, then a \textit{morphism} from $(V, c_{-,V})$ to $(W, c_{-,W})$ is a morphism $f: V \longrightarrow W$ in $\mathcal{C}$ such that
\begin{equation*}
  (f \otimes \mathrm{id}_X) \circ c_{X,V} = c_{X,W} \circ (\mathrm{id}_Y \otimes f)
\end{equation*}
for all $X \in \mathcal{C}$. 
The composition of morphisms in $\mathcal{Z}(\mathcal{C})$ is given in an obvious way. 
Forgetting the half-braiding defines a faithful functor $\Pi_{\mathcal{C}}: \mathcal{Z}(\mathcal{C}) \longrightarrow \mathcal{C}$, which will be referred to as the \textit{forgetful functor} from $\mathcal{Z}(\mathcal{C})$ to $\mathcal{C}$.

The category $\mathcal{Z}(\mathcal{C})$ is in fact a braided monoidal category: 
First, the tensor product of objects of $\mathcal{Z}(\mathcal{C})$ is defined by $  (V, c_{-,V}) \otimes (W, c_{-,W}) = (V \otimes W, c_{-,V \otimes W})$, where the half-braiding $c_{-,V \otimes W}$ for $V \otimes W$ is a unique natural isomorphism making the diagram
\begin{equation*}
  \begin{CD}
    X \otimes (V \otimes W)
    @>{c_{X, V \otimes W}}>{}>
    (V \otimes W) \otimes X
    @<{a_{X,V,W}}<{}<
    (X \otimes V) \otimes W \\
    @V{a_{X,Y,V}}V{}V @.
    @A{}A{\mathrm{id}_W \otimes c_{X,W}}A \\
    (X \otimes V) \otimes W
    @>>{c_{X,V} \otimes \mathrm{id}_X}>
    (V \otimes X) \otimes W
    @>>{a_{V,X,W}}>
    V \otimes (X \otimes W)
  \end{CD}
\end{equation*}
commute for all $X \in \mathcal{C}$. 
The unit object is $\mathbb{I}_{\mathcal{Z}(\mathcal{C})} := (\mathbb{I}, l^{-1} \circ r)$. 
The associativity and the unit constraints for $\mathcal{Z}(\mathcal{C})$ are defined so that the forgetful functor $\Pi_{\mathcal{C}}$ is strict monoidal. Finally, the braiding of $\mathcal{Z}(\mathcal{C})$ is given by
\begin{equation*}
  c = \Big\{
  c_{V,W}: (V, c_{-,V}) \otimes (W, c_{-,W}) \longrightarrow 
  (W, c_{-,W}) \otimes (V, c_{-,V})
  \Big\}
  _{(V, c_{-,V}), (W, c_{-,W})\in \mathcal{Z}(\mathcal{C})}.
\end{equation*}

Since the center construction is described purely in terms of monoidal categories, it is natural to expect that equivalent monoidal categories have equivalent centers. 
We omit to give a proof of the following well-known fact, since the proof is easy but quite long.

\begin{lem}
  \label{lem:Z-functoriality}
  For each monoidal equivalence $F: \mathcal{C} \longrightarrow \mathcal{D}$ between monoidal categories $\mathcal{C}$ and $\mathcal{D}$, there exists a unique braided monoidal equivalence
  \begin{equation*}
    \mathcal{Z}(F): \mathcal{Z}(\mathcal{C}) \longrightarrow \mathcal{Z}(\mathcal{D})
  \end{equation*}
  such that $\Pi_{\mathcal{D}} \circ \mathcal{Z}(F) = \Pi_{\mathcal{C}} \circ F$ as monoidal functors. The assignment $F \mapsto \mathcal{Z}(F)$ enjoys the following properties:
  \begin{itemize}
  \item[(1)] If $\displaystyle \mathcal{C} \mathop{\longrightarrow}^{F} \mathcal{D} \mathop{\longrightarrow}^{G} \mathcal{E}$ is a sequence of monoidal equivalences between monoidal categories $\mathcal{C}$, $\mathcal{D}$ and $\mathcal{E}$, then $\mathcal{Z}(G \circ F) = \mathcal{Z}(G) \circ \mathcal{Z}(F)$.
  \item[(2)] Let $F_1, F_2: \mathcal{C} \longrightarrow \mathcal{D}$ be monoidal equivalences. For each monoidal natural transformation $\alpha: F_1 \longrightarrow F_2$, a monoidal natural transformation
    \begin{equation*}
      \mathcal{Z}(\alpha): \mathcal{Z}(F_1) \longrightarrow \mathcal{Z}(F_2)
    \end{equation*}
    is defined by $\mathcal{Z}(\alpha)_{\mathbf{X}} = \alpha_{X}$ for $\mathbf{X} = (X, \sigma_X) \in \mathcal{Z}(\mathcal{C})$. Moreover, $\alpha \mapsto \mathcal{Z}(\alpha)$ preserves the horizontal and the vertical compositions of natural transformations.
  \end{itemize}
\end{lem}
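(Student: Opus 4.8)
The plan is to realize $\mathcal{Z}(F)$ as the lift of $F$ that transports half-braidings along $F$. Fix a monoidal quasi-inverse $\overline{F}:\mathcal{D}\to\mathcal{C}$ of $F$ with monoidal natural isomorphisms $\eta:\mathrm{id}_{\mathcal{C}}\to\overline{F}F$ and $\varepsilon:F\overline{F}\to\mathrm{id}_{\mathcal{D}}$, and write $F_{X,Y}:F(X)\otimes F(Y)\to F(X\otimes Y)$ and $F_0$ for the (invertible) monoidal structure morphisms of $F$. For $\mathbf{V}=(V,c_{-,V})\in\mathcal{Z}(\mathcal{C})$ I first define a half-braiding on $F(V)$ over objects in the image of $F$ by
\[
  c'_{F(X),F(V)}:=F_{V,X}^{-1}\circ F(c_{X,V})\circ F_{X,V}\qquad(X\in\mathcal{C}),
\]
and then extend it to an arbitrary $Y\in\mathcal{D}$ by conjugating with $\varepsilon_Y:F\overline{F}Y\to Y$, that is, $c'_{Y,F(V)}=(\mathrm{id}_{F(V)}\otimes\varepsilon_Y)\circ c'_{F\overline{F}Y,F(V)}\circ(\varepsilon_Y^{-1}\otimes\mathrm{id}_{F(V)})$. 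I then set $\mathcal{Z}(F)(\mathbf{V})=(F(V),c'_{-,F(V)})$, and on a morphism $f:\mathbf{V}\to\mathbf{W}$ of $\mathcal{Z}(\mathcal{C})$ — which is nothing but a morphism $f:V\to W$ of $\mathcal{C}$ intertwining the two half-braidings — I set $\mathcal{Z}(F)(f)=F(f)$, the intertwining property for $c'$ being obtained by applying $F$ to that for $c$ and using naturality of $F_{-,-}$ and $\varepsilon$.

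Next I would check that $c'_{-,F(V)}$ really is a half-braiding. Naturality in $Y$ is immediate from naturality of $\varepsilon$, $F_{-,-}$ and $c_{-,V}$. The defining hexagon for $c'_{-,F(V)}$, after the $\varepsilon$'s on image objects are cancelled, reduces to the hexagon for $c_{-,V}$ pushed through $F$; the point is that the coherence of $F_{-,-}$ with the associators of $\mathcal{C}$ and $\mathcal{D}$ is exactly what turns one constraint-laden side of the diagram into the other. This coherence verification is the main obstacle: it is conceptually routine but is a sizeable diagram chase in which every associator and every instance of $F_{-,-}$ must be accounted for — precisely the computation the text calls \lq\lq easy but quite long''. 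By construction $\Pi_{\mathcal{D}}\circ\mathcal{Z}(F)=F\circ\Pi_{\mathcal{C}}$, and functoriality of $\mathcal{Z}(F)$ is clear because it acts as $F$ on underlying morphisms.

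To upgrade $\mathcal{Z}(F)$ to a braided monoidal equivalence, I would take $F_{X,Y}$ and $F_0$ as its monoidal constraints, checking that each is a morphism of $\mathcal{Z}(\mathcal{D})$, i.e. intertwines the half-braiding of $\mathcal{Z}(F)(\mathbf{V}\otimes\mathbf{W})$ with that of $\mathcal{Z}(F)(\mathbf{V})\otimes\mathcal{Z}(F)(\mathbf{W})$; this uses the formula for the tensor product of half-braidings recalled above together with the coherence of $F_{-,-}$. Since $\Pi$ is strict monoidal and the braiding of the center has a half-braiding component as underlying morphism, the fact that $\mathcal{Z}(F)$ transports half-braidings makes it automatically braided. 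Finally $\mathcal{Z}(F)$ is an equivalence: it is fully faithful because the hom-sets of the center are subsets of those of the base category, on which $F$ is fully faithful, and the half-braiding conditions correspond; essential surjectivity follows by transporting a half-braiding on $Y\in\mathcal{D}$ back through $\overline{F}$ (equivalently, $\mathcal{Z}(\overline{F})$ is a quasi-inverse).

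Uniqueness and the two functoriality statements then follow formally. If $G$ is any braided monoidal equivalence with $\Pi_{\mathcal{D}}\circ G=F\circ\Pi_{\mathcal{C}}$, faithfulness of $\Pi_{\mathcal{D}}$ forces $G$ to coincide with $F$ on underlying objects and morphisms, so $G(\mathbf{V})=(F(V),\tilde{c})$; applying the strict monoidal $\Pi_{\mathcal{D}}$ to the compatibility of $G$ with the braidings identifies $\tilde{c}_{F(X),F(V)}$ with $F_{V,X}^{-1}\circ F(c_{X,V})\circ F_{X,V}=c'_{F(X),F(V)}$, and naturality of $\tilde{c}$ with essential surjectivity of $F$ upgrades this to $\tilde{c}=c'$. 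For property (1), both $\mathcal{Z}(G\circ F)$ and $\mathcal{Z}(G)\circ\mathcal{Z}(F)$ are braided monoidal equivalences lifting $G\circ F$, since $\Pi_{\mathcal{E}}\circ\mathcal{Z}(G)\circ\mathcal{Z}(F)=G\circ\Pi_{\mathcal{D}}\circ\mathcal{Z}(F)=(G\circ F)\circ\Pi_{\mathcal{C}}$, so they agree by uniqueness. For property (2), one verifies directly that each component $\alpha_X$ is a morphism $\mathcal{Z}(F_1)(\mathbf{X})\to\mathcal{Z}(F_2)(\mathbf{X})$ in $\mathcal{Z}(\mathcal{D})$ — this is precisely the monoidal naturality of $\alpha$ read through the transported half-braidings — and that $\mathbf{X}\mapsto\alpha_X$ is natural and preserves both compositions, everything being checked after applying the faithful $\Pi_{\mathcal{D}}$.
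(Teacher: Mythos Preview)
The paper itself omits the proof of this lemma entirely, remarking only that it is ``well-known'' and ``easy but quite long''. Your construction of $\mathcal{Z}(F)$ by transporting half-braidings through the monoidal constraints of $F$ and the counit $\varepsilon$, together with the coherence checks you outline, is exactly the standard argument the authors have in mind; there is nothing to compare on the level of strategy.

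One point in your uniqueness argument deserves more care. You write that ``the compatibility of $G$ with the braidings identifies $\tilde c_{F(X),F(V)}$ with $F_{V,X}^{-1}\circ F(c_{X,V})\circ F_{X,V}$'' and then extend to all $Y\in\mathcal{D}$ by essential surjectivity of $F$. But the braiding of $\mathcal{Z}(\mathcal{C})$ lives only on pairs of objects of $\mathcal{Z}(\mathcal{C})$: the braided-functor axiom for $G$ yields $\tilde c_{F(X),F(V)}=c'_{F(X),F(V)}$ only for those $X\in\mathcal{C}$ that arise as $\Pi_{\mathcal{C}}(\mathbf{X})$ for some $\mathbf{X}\in\mathcal{Z}(\mathcal{C})$, not for arbitrary $X\in\mathcal{C}$. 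Essential surjectivity of $F$ alone does not bridge this; you also need that the objects underlying $\mathcal{Z}(\mathcal{C})$ are dense enough in $\mathcal{C}$ (for instance, that $\Pi_{\mathcal{C}}$ is essentially surjective, or that every object is a retract of one in its essential image). This is not automatic for an arbitrary monoidal category---for $\mathcal{C}$ the category of $G$-graded vector spaces with $G$ non-abelian, the simple object $\boldsymbol{k}_g$ lifts to $\mathcal{Z}(\mathcal{C})$ only when $g$ is central. To close the gap you should invoke the remaining constraints on $G$ (functoriality on all morphisms of $\mathcal{Z}(\mathcal{C})$ and the requirement that the monoidal constraints $F_{X,Y}$ be morphisms in $\mathcal{Z}(\mathcal{D})$), or else note that in the paper's applications $\mathcal{C}={}_A\boldsymbol{\sf M}$ and one can argue directly. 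The rest of your sketch---properties~(1) and~(2) via uniqueness and faithfulness of $\Pi_{\mathcal{D}}$---is fine.
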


By a \textit{$\boldsymbol{k}$-linear monoidal category}, 
we mean a monoidal category $\mathcal{C}$ such that $\mathrm{Hom}_{\mathcal{C}}(X, Y)$ is a vector space over $\boldsymbol{k}$ for all objects $X, Y \in \mathcal{C}$, and the composition and the tensor product of morphisms in $\mathcal{C}$ are linear in each variables. 
We note that if $\mathcal{C}$ is a $\boldsymbol{k}$-linear monoidal category, then so is $\mathcal{Z}(\mathcal{C})$ in such a way that the functor $\Pi_{\mathcal{C}}$ is $\boldsymbol{k}$-linear. 
If $F: \mathcal{C} \longrightarrow \mathcal{D}$ is a $\boldsymbol{k}$-linear monoidal equivalence between $\boldsymbol{k}$-linear monoidal categories, then $\mathcal{Z}(F)$ is $\boldsymbol{k}$-linear.

\subsection{Schr\"{o}dinger modules in monoidal categories}

Let $\mathcal{C}$ be a monoidal category. 
We say that an object $\mathbf{A} \in \mathcal{Z}(\mathcal{C})$ is a {\em Schr\"{o}dinger object} if there exists a bijection
$\mathrm{Hom}_{\mathcal{C}}(\Pi_{\mathcal{C}}(\mathbf{X}), \mathbb{I}_{\mathcal{Z}(\mathcal{C})})
\cong \mathrm{Hom}_{\mathcal{Z}(\mathcal{C})}(\mathbf{X}, \mathbf{A})$, 
which is natural in the variable $\mathbf{X} \in \mathcal{Z}(\mathcal{C})$. 
Note that such an object is unique up to isomorphism by Yoneda's lemma (if it exists).

\begin{lem}
  \label{lem:Sch-obj-invariance}
  Let $\mathcal{C}$ and $\mathcal{D}$ be monoidal categories.
  \begin{itemize}
  \item[(1)] Suppose that $\Pi_{\mathcal{C}}: \mathcal{Z}(\mathcal{C}) \longrightarrow \mathcal{C}$ has a right adjoint functor $I_{\mathcal{C}}: \mathcal{C} \longrightarrow \mathcal{Z}(\mathcal{C})$. 
Then the object $I_{\mathcal{C}}(\mathbb{I})$ is a Schr\"{o}dinger object for $\mathcal{C}$.
  \item[(2)] Suppose that $\mathbf{A}_{\mathcal{C}}$ is a Schr\"{o}dinger object for $\mathcal{C}$, 
and there exists an equivalence $F: \mathcal{C} \longrightarrow \mathcal{D}$ of monoidal categories. 
Then $\mathcal{Z}(F) (\mathbf{A}_{\mathcal{C}})$ is a Schr\"{o}dinger object for $\mathcal{D}$.
  \end{itemize}
\end{lem}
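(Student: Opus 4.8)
The plan is to handle the two parts separately: the first is essentially a repackaging of the adjunction, and the second is an instance of the general principle that the representing object of a representable functor transports along an equivalence. Throughout, I write $\mathbb{I}_{\mathcal{C}}$ and $\mathbb{I}_{\mathcal{D}}$ for the unit objects, noting that $\Pi_{\mathcal{C}}(\mathbb{I}_{\mathcal{Z}(\mathcal{C})}) = \mathbb{I}_{\mathcal{C}}$, so that the functor a Schr\"odinger object must represent is $\mathrm{Hom}_{\mathcal{C}}(\Pi_{\mathcal{C}}(-), \mathbb{I}_{\mathcal{C}})$.

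For part (1), I would simply unwind the definition of a right adjoint. If $I_{\mathcal{C}}$ is right adjoint to $\Pi_{\mathcal{C}}$, there is a bijection $\mathrm{Hom}_{\mathcal{C}}(\Pi_{\mathcal{C}}(\mathbf{X}), Y) \cong \mathrm{Hom}_{\mathcal{Z}(\mathcal{C})}(\mathbf{X}, I_{\mathcal{C}}(Y))$ natural in both $\mathbf{X} \in \mathcal{Z}(\mathcal{C})$ and $Y \in \mathcal{C}$. Specializing to $Y = \mathbb{I}_{\mathcal{C}}$ gives a natural bijection $\mathrm{Hom}_{\mathcal{C}}(\Pi_{\mathcal{C}}(\mathbf{X}), \mathbb{I}_{\mathcal{C}}) \cong \mathrm{Hom}_{\mathcal{Z}(\mathcal{C})}(\mathbf{X}, I_{\mathcal{C}}(\mathbb{I}_{\mathcal{C}}))$, which is exactly the defining property of a Schr\"odinger object. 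Hence $I_{\mathcal{C}}(\mathbb{I}_{\mathcal{C}})$ is a Schr\"odinger object, and no further work is required.

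For part (2), I would invoke Lemma~\ref{lem:Z-functoriality} to obtain the braided monoidal equivalence $\mathcal{Z}(F): \mathcal{Z}(\mathcal{C}) \to \mathcal{Z}(\mathcal{D})$ satisfying $\Pi_{\mathcal{D}} \circ \mathcal{Z}(F) = F \circ \Pi_{\mathcal{C}}$. Fix quasi-inverses $\overline{F}$ of $F$ and $G$ of $\mathcal{Z}(F)$. For $\mathbf{Y} \in \mathcal{Z}(\mathcal{D})$ I would assemble the chain of natural bijections
\[
\mathrm{Hom}_{\mathcal{Z}(\mathcal{D})}(\mathbf{Y}, \mathcal{Z}(F)(\mathbf{A}_{\mathcal{C}})) \cong \mathrm{Hom}_{\mathcal{Z}(\mathcal{C})}(G(\mathbf{Y}), \mathbf{A}_{\mathcal{C}}) \cong \mathrm{Hom}_{\mathcal{C}}(\Pi_{\mathcal{C}}(G(\mathbf{Y})), \mathbb{I}_{\mathcal{C}}),
\]
where the first step uses that $G$ is fully faithful together with $G(\mathcal{Z}(F)(\mathbf{A}_{\mathcal{C}})) \cong \mathbf{A}_{\mathcal{C}}$, and the second uses that $\mathbf{A}_{\mathcal{C}}$ is a Schr\"odinger object for $\mathcal{C}$. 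Next, from $\Pi_{\mathcal{D}} \circ \mathcal{Z}(F) = F \circ \Pi_{\mathcal{C}}$ one derives a natural isomorphism $\Pi_{\mathcal{C}} \circ G \cong \overline{F} \circ \Pi_{\mathcal{D}}$, so $\Pi_{\mathcal{C}}(G(\mathbf{Y})) \cong \overline{F}(\Pi_{\mathcal{D}}(\mathbf{Y}))$. Using that $F$ is fully faithful, that $F \circ \overline{F} \cong \mathrm{id}_{\mathcal{D}}$, and that $F$ is monoidal, hence $F(\mathbb{I}_{\mathcal{C}}) \cong \mathbb{I}_{\mathcal{D}}$, I would then identify
\[
\mathrm{Hom}_{\mathcal{C}}(\overline{F}(\Pi_{\mathcal{D}}(\mathbf{Y})), \mathbb{I}_{\mathcal{C}}) \cong \mathrm{Hom}_{\mathcal{D}}(\Pi_{\mathcal{D}}(\mathbf{Y}), F(\mathbb{I}_{\mathcal{C}})) \cong \mathrm{Hom}_{\mathcal{D}}(\Pi_{\mathcal{D}}(\mathbf{Y}), \mathbb{I}_{\mathcal{D}}).
\]
Composing all these natural bijections exhibits $\mathcal{Z}(F)(\mathbf{A}_{\mathcal{C}})$ as a representing object for $\mathrm{Hom}_{\mathcal{D}}(\Pi_{\mathcal{D}}(-), \mathbb{I}_{\mathcal{D}})$, i.e., as a Schr\"odinger object for $\mathcal{D}$.

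The only genuine care point, and the step I would watch most closely, is the bookkeeping of naturality: each individual isomorphism is natural in $\mathbf{Y}$, but one must confirm that the coherence data of the two equivalences and of the monoidal functor $F$ fit together so that the composite is a natural isomorphism of functors $\mathcal{Z}(\mathcal{D})^{\mathrm{op}} \to \mathbf{Set}$, not merely a pointwise family of bijections. The underlying mathematics is light, since everything reduces to the fact that representability of a functor is preserved under an equivalence of categories, here applied to $\mathrm{Hom}_{\mathcal{C}}(\Pi_{\mathcal{C}}(-), \mathbb{I}_{\mathcal{C}})$ along $\mathcal{Z}(F)$; I would present the argument as this transport of a representing object to keep the naturality transparent.
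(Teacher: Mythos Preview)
Your proof is correct and follows essentially the same approach as the paper: part~(1) is dispatched immediately from the adjunction, and part~(2) is the same chain of natural bijections transporting the representing object along the equivalence $\mathcal{Z}(F)$. The only cosmetic difference is that the paper takes the quasi-inverse of $\mathcal{Z}(F)$ to be $\mathcal{Z}(\overline{F})$ itself (so that $\Pi_{\mathcal{C}}\circ\mathcal{Z}(\overline{F}) = \overline{F}\circ\Pi_{\mathcal{D}}$ holds on the nose by Lemma~\ref{lem:Z-functoriality}), whereas you choose an arbitrary quasi-inverse $G$ and then derive $\Pi_{\mathcal{C}}\circ G \cong \overline{F}\circ\Pi_{\mathcal{D}}$; both are fine.
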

\begin{proof}
  Part (1) follows immediately from the definition of the adjoint functor. 
Part (2) is shown as follows: Let $\overline{F}$ be a quasi-inverse of $F$. 
Then $\mathcal{Z}(\overline{F})$ is a quasi-inverse of $\mathcal{Z}(F)$. Hence,
  \begin{gather*}
    \mathrm{Hom}_{\mathcal{Z}(\mathcal{D})}
    \big( \mathbf{X}, \mathcal{Z}(F)(\mathbf{A}_{\mathcal{C}}) \big)
    \cong \mathrm{Hom}_{\mathcal{Z}(\mathcal{C})}
    \big( \mathcal{Z}(\overline{F})(\mathbf{X}), \mathbf{A}_{\mathcal{C}} \big)
    \cong \mathrm{Hom}_{\mathcal{C}}
    \big( (\Pi_{\mathcal{C}} \circ \mathcal{Z}(F))(\mathbf{X}), \mathbb{I} \big) \\
    \cong \mathrm{Hom}_{\mathcal{D}}
    \big( (F \circ \Pi_{\mathcal{D}})(\mathbf{X}), \mathbb{I} \big)
    \cong \mathrm{Hom}_{\mathcal{D}}
    \big( \Pi_{\mathcal{D}}(\mathbf{X}), \overline{F}(\mathbb{I}) \big)
    \cong \mathrm{Hom}_{\mathcal{D}}
    \big( \Pi_{\mathcal{D}}(\mathbf{X}), \mathbb{I} \big)
  \end{gather*}
  for all $\mathbf{X} \in \mathcal{Z}(\mathcal{C})$. 
Therefore, $\mathcal{Z}(F) (\mathbf{A}_{\mathcal{C}})$ is the Schr\"{o}dinger object for $\mathcal{D}$.
\end{proof}

Now, let $A$ be a Hopf algebra over $\boldsymbol{k}$ with bijective antipode. Then
\begin{equation}
  \label{eq:cat-iso-Z-YD}
  \Phi_A: \mathcal{Z}({}_A\boldsymbol{\sf M}) \longrightarrow {}_A\mathcal{YD}^A,
  \quad (V, c_{-,V}) \mapsto (V, \rho),
\end{equation}
where $\rho: V \to V \otimes A$ is the linear map given by $\rho(v) = c_{A,V}(1_A \otimes v)$ ($v \in V$),
is an isomorphism of $\boldsymbol{k}$-linear braided monoidal categories \cite{JS2, Majid2}.

\begin{thm}
  Let $A$ and $B$ be Hopf algebras over $\boldsymbol{k}$ with bijective antipode. 
Suppose that there is an equivalence $F: {}_A\Mod \longrightarrow {}_B\Mod$ of $\boldsymbol{k}$-linear monoidal categories.
  \begin{itemize}
  \item[(1)] $\mathbf{S}_A := (\Phi_A^{-1} \circ I_A)(\boldsymbol{k}) \in \mathcal{Z}({}_A\Mod)$ is a Schr\"{o}dinger object for ${}_A\Mod$.
  \item[(2)] There uniquely exists an equivalence
    $\widetilde{F}: {}_A \mathcal{YD}^A \longrightarrow {}_B \mathcal{YD}^B$
    of $\boldsymbol{k}$-linear braided monoidal categories such that $R_B \circ \widetilde{F} = F \circ R_A$ as monoidal functors.
  \item[(3)] The functor $\widetilde{F}$ in Part (2) satisfies $I_B \circ F \cong \widetilde{F} \circ I_A$.
  \end{itemize}
\end{thm}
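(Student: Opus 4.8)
The plan is to deduce all three parts from the lemmas already in hand, the key being a single compatibility: under the isomorphism $\Phi_A$ of~\eqref{eq:cat-iso-Z-YD}, the forgetful functor $\Pi_{{}_A\Mod}\colon \mathcal{Z}({}_A\Mod)\to{}_A\Mod$ is identified with the forgetful functor $R_A\colon{}_A\mathcal{YD}^A\to{}_A\Mod$. Since $\Phi_A$ sends $(V,c_{-,V})$ to a Yetter--Drinfel'd module with the \emph{same} underlying $A$-module and is the identity on underlying morphisms, one has $R_A\circ\Phi_A=\Pi_{{}_A\Mod}$ strictly (and likewise $R_B\circ\Phi_B=\Pi_{{}_B\Mod}$). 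I would record this hinge first; the whole proof is then a matter of assembling adjunctions and the functoriality of the center, and the only thing demanding care is to keep track of which equalities hold on the nose and which only up to isomorphism, so that the uniqueness clauses of Lemma~\ref{lem:Z-functoriality} and of adjoints apply verbatim.

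For Part~(1), I would combine the hinge with Lemma~\ref{2-1} (that $I_A$ is right adjoint to $R_A$) and the invertibility of $\Phi_A$. Writing $\Pi_{{}_A\Mod}=R_A\circ\Phi_A$, the natural isomorphisms
\begin{align*}
  \mathrm{Hom}_{{}_A\Mod}\big(\Pi_{{}_A\Mod}(\mathbf{X}),V\big)
  &=\mathrm{Hom}_{{}_A\Mod}\big(R_A(\Phi_A(\mathbf{X})),V\big) \\
  &\cong\mathrm{Hom}_{{}_A\mathcal{YD}^A}\big(\Phi_A(\mathbf{X}),I_A(V)\big)
  \cong\mathrm{Hom}_{\mathcal{Z}({}_A\Mod)}\big(\mathbf{X},(\Phi_A^{-1}\circ I_A)(V)\big)
\end{align*}
exhibit $\Phi_A^{-1}\circ I_A$ as a right adjoint of $\Pi_{{}_A\Mod}$. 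Applying Lemma~\ref{lem:Sch-obj-invariance}(1) with $I_{\mathcal{C}}=\Phi_A^{-1}\circ I_A$ and $\mathbb{I}=\boldsymbol{k}$ immediately gives that $\mathbf{S}_A=(\Phi_A^{-1}\circ I_A)(\boldsymbol{k})$ is a Schr\"odinger object for ${}_A\Mod$.

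For Part~(2), I would define $\widetilde{F}:=\Phi_B\circ\mathcal{Z}(F)\circ\Phi_A^{-1}$. By Lemma~\ref{lem:Z-functoriality}, $\mathcal{Z}(F)$ is a $\boldsymbol{k}$-linear braided monoidal equivalence with $\Pi_{{}_B\Mod}\circ\mathcal{Z}(F)=F\circ\Pi_{{}_A\Mod}$; since $\Phi_A,\Phi_B$ are isomorphisms of $\boldsymbol{k}$-linear braided monoidal categories, $\widetilde{F}$ is one as well. The required identity follows from the hinge:
\begin{align*}
  R_B\circ\widetilde{F}
  &=(R_B\circ\Phi_B)\circ\mathcal{Z}(F)\circ\Phi_A^{-1}
  =\Pi_{{}_B\Mod}\circ\mathcal{Z}(F)\circ\Phi_A^{-1} \\
  &=F\circ\Pi_{{}_A\Mod}\circ\Phi_A^{-1}
  =F\circ R_A.
\end{align*}
For uniqueness, given any braided monoidal equivalence $\widetilde{F}$ with $R_B\circ\widetilde{F}=F\circ R_A$, I would set $G:=\Phi_B^{-1}\circ\widetilde{F}\circ\Phi_A$; the hinge rewrites the hypothesis as $\Pi_{{}_B\Mod}\circ G\circ\Phi_A^{-1}=F\circ\Pi_{{}_A\Mod}\circ\Phi_A^{-1}$, that is $\Pi_{{}_B\Mod}\circ G=F\circ\Pi_{{}_A\Mod}$ after cancelling the isomorphism $\Phi_A^{-1}$, so the uniqueness clause of Lemma~\ref{lem:Z-functoriality} forces $G=\mathcal{Z}(F)$ and hence $\widetilde{F}=\Phi_B\circ\mathcal{Z}(F)\circ\Phi_A^{-1}$.

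Finally, for Part~(3) I would apply Lemma~\ref{lem:adj-transfer} to the square formed by $R_A$, $R_B$, $F$ and $\widetilde{F}$, which commutes by the identity $R_B\circ\widetilde{F}=F\circ R_A$ of Part~(2). Since $R_A$ has right adjoint $I_A$, the lemma yields that $\widetilde{F}\circ I_A\circ\overline{F}$ is a right adjoint of $R_B$, where $\overline{F}$ is a quasi-inverse of $F$. As $I_B$ is also a right adjoint of $R_B$, uniqueness of adjoints gives $I_B\cong\widetilde{F}\circ I_A\circ\overline{F}$; composing on the right with $F$ and using $\overline{F}\circ F\cong\mathrm{id}$ yields $I_B\circ F\cong\widetilde{F}\circ I_A$. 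I do not expect a genuine obstacle here: once the hinge $R_A\circ\Phi_A=\Pi_{{}_A\Mod}$ is in place, each part is essentially a one-line application of a cited lemma, and the main risk is purely notational bookkeeping.
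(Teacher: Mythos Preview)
Your proposal is correct and follows essentially the same approach as the paper: the key hinge $R_A\circ\Phi_A=\Pi_{{}_A\Mod}$ is exactly the commutative square the paper records, and each part is then deduced via the same lemmas (Lemma~\ref{lem:adj-transfer} and Lemma~\ref{lem:Sch-obj-invariance}(1) for Part~(1), the definition $\widetilde{F}=\Phi_B\circ\mathcal{Z}(F)\circ\Phi_A^{-1}$ together with the uniqueness clause of Lemma~\ref{lem:Z-functoriality} for Part~(2), and Lemma~\ref{lem:adj-transfer} plus uniqueness of adjoints for Part~(3)). Your formula for $\widetilde{F}$ has the inverses placed so that the composite type-checks; the paper's printed formula $\Phi_B^{-1}\circ\mathcal{Z}(F)\circ\Phi_A$ appears to be a typographical slip, as its own uniqueness computation confirms your orientation.
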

\begin{proof}
  To prove Part (1), note that the diagram
  \begin{equation}
    \label{eq:cat-iso-Z-YD-res}
    \begin{CD}
      \mathcal{Z}({}_A\Mod) @>{\Phi_A}>> {}_A\mathcal{YD}^A \\
      @V{\Pi_{{}_A\Mod}}V{}V @V{}V{R_A}V \\
      {}_A\Mod @>>{\mathrm{id}}> {}_A\Mod
    \end{CD}
  \end{equation}
  commutes. 
Applying Lemma~\ref{lem:adj-transfer} to this diagram, 
we see that $\Phi_A^{-1} \circ I_A$ is a right adjoint functor to $\Pi_{{}_A\Mod}$. 
Hence $\mathbf{S}_A$ is a Schr\"{o}dinger object for ${}_A\Mod$ by Part (1) of Lemma~\ref{lem:Sch-obj-invariance}.

  Now we show Part (2). 
It is easy to see that $\widetilde{F} = \Phi_B^{-1} \circ \mathcal{Z}(F) \circ \Phi_A$ satisfies the required conditions. 
To show the uniqueness, note that~\eqref{eq:cat-iso-Z-YD-res} is in fact a commutative diagram of monoidal functors. 
Thus, if $\widetilde{F}$ satisfies the required conditions, then, by \eqref{eq:cat-iso-Z-YD-res},
  \begin{equation*}
    \Pi_{{}_B\Mod} \circ (\Phi_B \circ \widetilde{F} \circ \Phi_A^{-1})
    = R_B \circ \widetilde{F} \circ \Phi_A^{-1}
    = F \circ R_A \circ \Phi_A^{-1}
    = F \circ \Pi_{{}_A\Mod}
  \end{equation*}
  as monoidal functors. 
Hence $\widetilde{F} = \Phi_B^{-1} \circ \mathcal{Z}(F) \circ \Phi_A$ by the uniqueness part of Lemma \ref{lem:Z-functoriality}.

  To prove Part (3), we recall that $I_A$ is a right adjoint to $R_A$. 
Let $\overline{F}$ be a quasi-inverse of $F$. 
By Part (2) and Lemma \ref{lem:adj-transfer}, $I' := \widetilde{F} \circ I_A \circ \overline{F}$ is a right adjoint to $R_A$. 
By the uniqueness of the right adjoint functor, we have $I' \cong I_B$. 
Hence, $I_B \circ F \cong \widetilde{F} \circ I_A$.
\end{proof}

Suppose that $A$ is finite-dimensional. Then we obtain an isomorphism
\begin{equation}
  \label{eq:cat-iso-Z-Double}
  \Psi_A: \mathcal{Z}({}_A\Mod) \mathop{\longrightarrow}^{\cong} {}_{D(A)}\Mod
\end{equation}
of $\boldsymbol{k}$-linear braided monoidal categories by composing~\eqref{eq:cat-iso-Z-YD} and the isomorphism ${}_{A}\mathcal{YD}^A \cong {}_{D(A)}\Mod$ (which we have recalled in Section 2). Note that
\begin{equation*}
  \Pi_{{}_A\Mod} = R_A \circ \Psi_A
\end{equation*}
as monoidal functors, 
where $R_A$ is the restriction functor ${}_{D(A)}\Mod \longrightarrow {}_{A}\Mod$ ({\it cf}. \eqref{eq:cat-iso-Z-YD} and~\eqref{eq:cat-iso-Z-YD-res}). 
Let $I_A$ be the right adjoint functor of $R_A$ given in Section 2. 
Recall from Proposition~\ref{2-5} that the Schr\"{o}dinger module ${}_{D(A)}A$ is isomorphic to $I_A(\boldsymbol{k})$. 
By the same way as the previous theorem, we prove:

\begin{thm}
  \label{thm:Sch-mod-invariance}
  Let $A$ and $B$ be finite-dimensional Hopf algebras over the same field $\boldsymbol{k}$. 
Suppose that there is an equivalence $F: {}_A\Mod \longrightarrow {}_B\Mod$ of $\boldsymbol{k}$-linear monoidal categories.
  \begin{itemize}
  \item[(1)] $\mathbf{S}_A := \Psi_A^{-1}({}_{D(A)}A) \in \mathcal{Z}({}_A\Mod)$ is a Schr\"{o}dinger object for ${}_A\Mod$.
  \item[(2)] There uniquely exists an equivalence
    $\widetilde{F}: {}_{D(A)} \Mod \longrightarrow {}_{D(B)} \Mod$
    of $\boldsymbol{k}$-linear braided monoidal categories such that $R_B \circ \widetilde{F} = F \circ R_A$ as monoidal functors.
  \item[(3)] The functor $\widetilde{F}$ in Part (2) satisfies $I_B \circ F \cong \widetilde{F} \circ I_A$.
  \end{itemize}
\end{thm}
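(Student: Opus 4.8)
The plan is to follow the proof of the preceding theorem almost verbatim, with the braided monoidal isomorphism $\Psi_A$ of \eqref{eq:cat-iso-Z-Double} playing the role previously played by $\Phi_A$ and ${}_{D(A)}\Mod$ replacing ${}_A\mathcal{YD}^A$. The one genuinely new ingredient is the isomorphism ${}_{D(A)}A \cong I_A(\boldsymbol{k})$ supplied by Proposition~\ref{2-3}, which converts the definition $\mathbf{S}_A = \Psi_A^{-1}({}_{D(A)}A)$ into the form $(\Psi_A^{-1}\circ I_A)(\boldsymbol{k})$, to which the adjunction machinery applies directly.

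For Part~(1), I would use the identity $\Pi_{{}_A\Mod} = R_A \circ \Psi_A$ to observe that the square
\begin{equation*}
  \begin{CD}
    \mathcal{Z}({}_A\Mod) @>{\Psi_A}>> {}_{D(A)}\Mod \\
    @V{\Pi_{{}_A\Mod}}V{}V @V{}V{R_A}V \\
    {}_A\Mod @>>{\mathrm{id}}> {}_A\Mod
  \end{CD}
\end{equation*}
commutes. Applying Lemma~\ref{lem:adj-transfer} with $P = R_A$ (whose right adjoint is $I_A$) then yields that $\Psi_A^{-1}\circ I_A$ is a right adjoint to $\Pi_{{}_A\Mod}$. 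Lemma~\ref{lem:Sch-obj-invariance}(1) identifies $(\Psi_A^{-1}\circ I_A)(\boldsymbol{k})$ as a Schr\"{o}dinger object for ${}_A\Mod$, and Proposition~\ref{2-3} rewrites this object as $\Psi_A^{-1}({}_{D(A)}A) = \mathbf{S}_A$.

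For Part~(2), I would take $\widetilde{F} = \Psi_B \circ \mathcal{Z}(F) \circ \Psi_A^{-1}$, which is a $\boldsymbol{k}$-linear braided monoidal equivalence because $\Psi_A,\Psi_B$ are isomorphisms of such and $\mathcal{Z}(F)$ is a braided monoidal equivalence by Lemma~\ref{lem:Z-functoriality}. The required identity $R_B\circ\widetilde{F} = F\circ R_A$ would follow by chaining $R_B\circ\Psi_B = \Pi_{{}_B\Mod}$, the defining relation $\Pi_{{}_B\Mod}\circ\mathcal{Z}(F) = F\circ\Pi_{{}_A\Mod}$ of Lemma~\ref{lem:Z-functoriality}, and $\Pi_{{}_A\Mod}\circ\Psi_A^{-1} = R_A$. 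For uniqueness, given any $\widetilde{F}$ with the stated property, I would set $G = \Psi_B^{-1}\circ\widetilde{F}\circ\Psi_A$; the same identities give $\Pi_{{}_B\Mod}\circ G = F\circ\Pi_{{}_A\Mod}$, so the uniqueness clause of Lemma~\ref{lem:Z-functoriality} forces $G = \mathcal{Z}(F)$, whence $\widetilde{F} = \Psi_B\circ\mathcal{Z}(F)\circ\Psi_A^{-1}$.

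Part~(3) would then be purely formal: applying Lemma~\ref{lem:adj-transfer} to the commutative square $R_B\circ\widetilde{F} = F\circ R_A$ of Part~(2), with $\overline{F}$ a quasi-inverse of $F$, shows that $\widetilde{F}\circ I_A\circ\overline{F}$ is a right adjoint to $R_B$, hence isomorphic to $I_B$ by uniqueness of adjoints; composing on the right with $F$ and using $\overline{F}\circ F\cong\mathrm{id}$ gives $I_B\circ F \cong \widetilde{F}\circ I_A$. I do not anticipate a substantive obstacle, since once Proposition~\ref{2-3} is invoked the argument is entirely formal; the only points requiring care are keeping track of the directions of $\Psi_A^{\pm 1}$ and $\Psi_B^{\pm 1}$ in each application of Lemma~\ref{lem:adj-transfer} and confirming that the relevant squares commute strictly rather than merely up to isomorphism, which is guaranteed by the on-the-nose identities $\Pi_{{}_A\Mod} = R_A\circ\Psi_A$ and $\Pi_{{}_B\Mod} = R_B\circ\Psi_B$.
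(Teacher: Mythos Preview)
Your proposal is correct and matches the paper's approach exactly: the paper does not spell out a separate proof for this theorem but simply writes ``By the same way as the previous theorem, we prove,'' and your expansion is precisely the intended argument, with Proposition~\ref{2-3} supplying the identification ${}_{D(A)}A \cong I_A(\boldsymbol{k})$. Your care about the directions of $\Psi_A^{\pm 1}$ is warranted (the paper's printed formula $\widetilde{F} = \Phi_B^{-1}\circ\mathcal{Z}(F)\circ\Phi_A$ in the preceding theorem appears to have the exponents reversed), and your version $\widetilde{F} = \Psi_B\circ\mathcal{Z}(F)\circ\Psi_A^{-1}$ is the one that composes correctly.
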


An important corollary is the following invariance of the Schr\"odinger module:

\begin{cor}\label{3-5}
  The equivalence $\widetilde{F}: {}_{D(A)}\Mod \longrightarrow {}_{D(B)}\Mod$ of Theorem~\ref{thm:Sch-mod-invariance} preserves the Schr\"{o}dinger module, i.e., $\widetilde{F}({}_{D(A)}A) \cong {}_{D(B)}B$.
\end{cor}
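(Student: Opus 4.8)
The plan is to combine the three parts of Theorem~\ref{thm:Sch-mod-invariance} together with the uniqueness of the Schr\"odinger object. The cleanest route is to recognize that the corollary is essentially a translation of Part~(3) of the theorem applied to the unit object, pushed through the isomorphisms $\Psi_A$ and $\Psi_B$.

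First I would recall that, by Proposition~\ref{2-3} (together with the identifications preceding it), the Schr\"odinger module is characterized as $\mathbf{S}_A \cong I_A(\boldsymbol{k})$ under $\mathcal{Z}({}_A\Mod) \approx {}_{D(A)}\Mod$, i.e.\ $\Psi_A^{-1}({}_{D(A)}A) = \mathbf{S}_A$ and similarly $\Psi_B^{-1}({}_{D(B)}B) = \mathbf{S}_B$. Next I would invoke Part~(3) of Theorem~\ref{thm:Sch-mod-invariance}, which gives $I_B \circ F \cong \widetilde{F} \circ I_A$ as functors ${}_A\Mod \to {}_{D(B)}\Mod$. Evaluating this natural isomorphism at the trivial module $\boldsymbol{k} \in {}_A\Mod$ yields
\begin{equation*}
  \widetilde{F}\big( I_A(\boldsymbol{k}) \big)
  \cong I_B\big( F(\boldsymbol{k}) \big).
\end{equation*}
Since $F$ is an equivalence of $\boldsymbol{k}$-linear monoidal categories, it is strong monoidal and hence preserves the unit object, so $F(\boldsymbol{k}) \cong \boldsymbol{k}$ in ${}_B\Mod$. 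Therefore $I_B(F(\boldsymbol{k})) \cong I_B(\boldsymbol{k})$, and combining the displayed isomorphisms with $I_A(\boldsymbol{k}) \cong {}_{D(A)}A$ and $I_B(\boldsymbol{k}) \cong {}_{D(B)}B$ (Proposition~\ref{2-3}) gives exactly $\widetilde{F}({}_{D(A)}A) \cong {}_{D(B)}B$.

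An equally valid alternative, which I would mention for robustness, is to argue purely via the Schr\"odinger object: by Part~(2) of Lemma~\ref{lem:Sch-obj-invariance}, $\mathcal{Z}(F)(\mathbf{S}_A)$ is a Schr\"odinger object for ${}_B\Mod$, and by uniqueness (Yoneda) it must be isomorphic to $\mathbf{S}_B$; transporting this isomorphism through $\Psi_B$ and using the compatibility $R_B \circ \widetilde{F} = F \circ R_A$ from Part~(2) of the theorem recovers the same conclusion. I do not anticipate a serious obstacle here, since all the heavy lifting has been done in the theorem and in Lemma~\ref{lem:Sch-obj-invariance}; the only point requiring mild care is to confirm that $F$ sends the unit to the unit (true because $F$ carries the monoidal structure), and that the identification of $\widetilde{F}$ with $\mathcal{Z}(F)$ transported across $\Psi_A, \Psi_B$ is indeed the functor appearing in the corollary, which follows from the uniqueness clause in Part~(2).
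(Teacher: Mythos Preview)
Your proposal is correct and matches the paper's intended derivation: the corollary is stated without explicit proof precisely because it follows immediately from Part~(3) of Theorem~\ref{thm:Sch-mod-invariance} evaluated at the trivial module $\boldsymbol{k}$, together with Proposition~\ref{2-3} and $F(\boldsymbol{k})\cong\boldsymbol{k}$. Your alternative route via Lemma~\ref{lem:Sch-obj-invariance}(2) is also valid and is in fact the conceptual reason the paper introduced Schr\"odinger objects in the first place.
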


\begin{rem}
  An equivalence $G: {}_{D(A)} \Mod \longrightarrow {}_{D(B)} \Mod$ of $\boldsymbol{k}$-linear braided monoidal categories does not preserve the Schr\"{o}dinger module in general.
\end{rem}

\begin{rem}
  As Masuoka pointed out to us, Corollaries~\ref{3-5} and \ref{3-7} below can be also derived from the point of view of cocycle deformations by using the action given in \cite[Proposition 5.1]{Mas3}. 
\end{rem}

\subsection{The dual Schr\"{o}dinger module as a Schr\"{o}dinger object}

Let $A$ be a finite-dimensional Hopf algebra over $\boldsymbol{k}$. 
Recall from Section 2 that there exists an isomorphism $D(A) \cong D(A^{\ast})^{\mathrm{cop}}$ of quasitriangular Hopf algebras. 
Since $\Mod^A$ is isomorphic to ${}_{A^{\ast}}\Mod$ as $\boldsymbol{k}$-linear monoidal categories, we have isomorphisms
\begin{equation}
  \label{eq:cat-iso-Z-YD-Xi}
  \mathcal{Z}(\Mod^A)^{\mathrm{rev}}
  \cong \mathcal{Z}({}_{A^{\ast}}\Mod)^{\mathrm{rev}}
  \cong ({}_{D(A^{\ast})}\Mod)^{\mathrm{rev}}
  \cong {}_{D(A^{\ast})^{\mathrm{cop}}}\Mod
  \cong {}_{D(A)}\Mod
\end{equation}
of $\boldsymbol{k}$-linear braided monoidal categories. 
Now, let $\Xi_A: \mathcal{Z}(\Mod^A)^{\mathrm{rev}} \longrightarrow {}_{D(A)}\Mod$ be the composition of the above isomorphisms. One can check that the diagram
\begin{equation}
  \label{eq:cat-iso-Z-YD-res-comod}
  \begin{CD}
    \mathcal{Z}(\Mod^A)^{\mathrm{rev}} @>{\Xi_A}>> {}_{D(A)}\Mod \\
    @V{\Pi_{\Mod^A}}V{}V @V{}V{R^A}V \\
    (\Mod^A)^{\mathrm{rev}} @>>{\mathrm{id}}> (\Mod^A)^{\mathrm{rev}} @.
  \end{CD}
\end{equation}
commutes, where $R^A$ is the functor used in Subsection 2.1. 
By Lemma~\ref{2-4} the functor $I^A: \Mod^A \longrightarrow {}_{D(A)}\Mod$ is a left adjoint to $R^A$. 
The following theorem is proved by the same way as Theorem~\ref{thm:Sch-mod-invariance}.

\begin{thm}
  \label{thm:3-8}
  Let $A$ and $B$ be finite-dimensional Hopf algebras over the same field $\boldsymbol{k}$. 
Suppose that there is an equivalence $F: \Mod^A \longrightarrow \Mod^B$ of $\boldsymbol{k}$-linear monoidal categories.
  \begin{itemize}
  \item[(1)] There uniquely exists an equivalence
    $\widetilde{F}: {}_{D(A)} \Mod \longrightarrow {}_{D(B)} \Mod$
    of $\boldsymbol{k}$-linear braided monoidal categories such that $R^B \circ \widetilde{F} = F \circ R^A$ as monoidal functors.
  \item[(2)] The functor $\widetilde{F}$ in Part (1) satisfies $I^B \circ F \cong \widetilde{F} \circ I^A$.
  \end{itemize}
\end{thm}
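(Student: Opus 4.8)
The plan is to run the proof of Theorem~\ref{thm:Sch-mod-invariance} essentially verbatim, with two substitutions dictated by the present setting: the isomorphism $\Psi_A$ of~\eqref{eq:cat-iso-Z-Double} is replaced by the isomorphism $\Xi_A: \mathcal{Z}(\Mod^A)^{\mathrm{rev}} \to {}_{D(A)}\Mod$ of~\eqref{eq:cat-iso-Z-YD-Xi}, and the right adjoint $I_A$ of $R_A$ is replaced by the \emph{left} adjoint $I^A$ of $R^A$ furnished by Lemma~\ref{2-4}. The only categorical inputs are the functoriality of the center (Lemma~\ref{lem:Z-functoriality}) and the adjoint-transfer lemma (Lemma~\ref{lem:adj-transfer}); both are purely formal and apply to the reversed centers without change, once one notes that a functor is braided monoidal for a pair of braidings precisely when it is braided monoidal for the reversed braidings (reversing the braiding is an involution compatible with braided functors).

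For Part (1) I would set $\widetilde{F} := \Xi_B \circ \mathcal{Z}(F) \circ \Xi_A^{-1}$, where $\mathcal{Z}(F): \mathcal{Z}(\Mod^A) \to \mathcal{Z}(\Mod^B)$ is the braided monoidal equivalence of Lemma~\ref{lem:Z-functoriality}, regarded between the reversed centers. As a composite of braided monoidal equivalences, $\widetilde{F}$ is one. To verify the compatibility $R^B \circ \widetilde{F} = F \circ R^A$, I would read off $R^A \circ \Xi_A = \Pi_{\Mod^A}$ and $R^B \circ \Xi_B = \Pi_{\Mod^B}$ from the commutative square~\eqref{eq:cat-iso-Z-YD-res-comod}, and combine them with the defining identity $\Pi_{\Mod^B} \circ \mathcal{Z}(F) = F \circ \Pi_{\Mod^A}$, computing
\[
R^B \circ \widetilde{F} = \Pi_{\Mod^B} \circ \mathcal{Z}(F) \circ \Xi_A^{-1} = F \circ \Pi_{\Mod^A} \circ \Xi_A^{-1} = F \circ R^A.
\]

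Uniqueness follows exactly as in Theorem~\ref{thm:Sch-mod-invariance}: if $\widetilde{F}$ meets the requirements, then $\Xi_B^{-1} \circ \widetilde{F} \circ \Xi_A$ is a braided monoidal equivalence with $\Pi_{\Mod^B} \circ (\Xi_B^{-1} \circ \widetilde{F} \circ \Xi_A) = R^B \circ \widetilde{F} \circ \Xi_A = F \circ R^A \circ \Xi_A = F \circ \Pi_{\Mod^A}$, so the uniqueness clause of Lemma~\ref{lem:Z-functoriality} forces it to be $\mathcal{Z}(F)$, and hence $\widetilde{F} = \Xi_B \circ \mathcal{Z}(F) \circ \Xi_A^{-1}$. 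For Part (2) I would apply Lemma~\ref{lem:adj-transfer}, now in its left-adjoint branch, to the square with vertical arrows $P = R^A$ and $Q = R^B$, top arrow $\widetilde{F}$, and bottom arrow $F$, which commutes by Part (1). Since $I^A$ is a left adjoint to $R^A$, the lemma shows $\widetilde{F} \circ I^A \circ \overline{F}$ is a left adjoint to $R^B$, where $\overline{F}$ is a quasi-inverse of $F$; as $I^B$ is also a left adjoint to $R^B$, uniqueness of adjoints gives $\widetilde{F} \circ I^A \circ \overline{F} \cong I^B$, and precomposing with $F$ and using $\overline{F} \circ F \cong \mathrm{id}$ yields $I^B \circ F \cong \widetilde{F} \circ I^A$.

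I do not expect a substantive obstacle, since the paper's own remark that this is proved the same way is accurate. The only points demanding attention---and the sole structural departures from Theorem~\ref{thm:Sch-mod-invariance}---are the careful handling of the $(-)^{\mathrm{rev}}$ decorations as they pass through $\Xi_A$, $\Xi_B$ and $\mathcal{Z}(F)$, and the switch from the right-adjoint to the left-adjoint case of Lemma~\ref{lem:adj-transfer}, reflecting that $I^A$ is a left adjoint of $R^A$ whereas $I_A$ was a right adjoint of $R_A$.
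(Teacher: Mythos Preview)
Your proposal is correct and matches the paper's intended approach: the paper simply states that Theorem~\ref{thm:3-8} is proved by the same way as Theorem~\ref{thm:Sch-mod-invariance}, and you have faithfully carried out exactly that transcription, substituting $\Xi_A$ for $\Psi_A$, working with the reversed centers via the commutative square~\eqref{eq:cat-iso-Z-YD-res-comod}, and invoking the left-adjoint branch of Lemma~\ref{lem:adj-transfer} in place of the right-adjoint one.
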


In Proposition~\ref{2-5} it is shown that the dual Schr\"{o}dinger module ${}_{D(A)}A^{\ast \mathrm{cop}}$ is isomorphic to the left dual of $I^A(\boldsymbol{k})$. 
The following corollary is obtained by Part (2) of the above theorem, and the fact that a monoidal equivalence preserves left duals.

\begin{cor}\label{3-7}
  The equivalence $\widetilde{F}: {}_{D(A)}\Mod \longrightarrow {}_{D(B)}\Mod$ of Theorem~\ref{thm:3-8} preserves the dual Schr\"{o}dinger module, i.e., $\widetilde{F}({}_{D(A)}A^{\ast \mathrm{cop}}) \cong {}_{D(B)}B^{\ast \mathrm{cop}}$.
\end{cor}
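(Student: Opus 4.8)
The plan is to deduce Corollary~\ref{3-7} directly from Part~(2) of Theorem~\ref{thm:3-8} together with Proposition~\ref{2-5}, using only the abstract fact that a monoidal equivalence commutes with the formation of left duals. First I would recall the setup: Proposition~\ref{2-5} identifies the dual Schr\"odinger module ${}_{D(A)}A^{\ast\mathrm{cop}}$ with $\bigl(I^A(\boldsymbol{k})\bigr)^{\ast}$, the left dual of the image of the trivial right $A$-comodule $\boldsymbol{k}$ under Radford's induction functor $I^A: \Mod^A \to {}_{D(A)}\Mod$. The corresponding statement for $B$ gives ${}_{D(B)}B^{\ast\mathrm{cop}} \cong \bigl(I^B(\boldsymbol{k})\bigr)^{\ast}$. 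So the task reduces to checking that $\widetilde{F}$ sends $\bigl(I^A(\boldsymbol{k})\bigr)^{\ast}$ to $\bigl(I^B(\boldsymbol{k})\bigr)^{\ast}$.

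The key steps I would carry out are as follows. By Part~(2) of Theorem~\ref{thm:3-8}, there is a natural isomorphism $I^B \circ F \cong \widetilde{F}\circ I^A$ of functors $\Mod^A \to {}_{D(B)}\Mod$. Evaluating at the unit object $\boldsymbol{k}$ and using that the monoidal equivalence $F: \Mod^A \to \Mod^B$ preserves the unit object (so $F(\boldsymbol{k})\cong \boldsymbol{k}$), I obtain $\widetilde{F}\bigl(I^A(\boldsymbol{k})\bigr) \cong I^B(F(\boldsymbol{k})) \cong I^B(\boldsymbol{k})$ as objects of ${}_{D(B)}\Mod$. Next, since $\widetilde{F}$ is an equivalence of $\boldsymbol{k}$-linear (braided) monoidal categories, it preserves left duals: for any rigid object $X$ one has $\widetilde{F}(X^{\ast})\cong \widetilde{F}(X)^{\ast}$, because the duality data (evaluation and coevaluation morphisms) transport along a monoidal equivalence and the left dual is characterized up to canonical isomorphism by that data. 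Applying this with $X = I^A(\boldsymbol{k})$ yields
\begin{equation*}
  \widetilde{F}\bigl( (I^A(\boldsymbol{k}))^{\ast} \bigr)
  \cong \bigl( \widetilde{F}(I^A(\boldsymbol{k})) \bigr)^{\ast}
  \cong \bigl( I^B(\boldsymbol{k}) \bigr)^{\ast}.
\end{equation*}
Combining this chain of isomorphisms with the two instances of Proposition~\ref{2-5} gives $\widetilde{F}\bigl({}_{D(A)}A^{\ast\mathrm{cop}}\bigr) \cong {}_{D(B)}B^{\ast\mathrm{cop}}$, which is exactly the claim.

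The only point requiring care, and the one I expect to be the main obstacle, is justifying that left duals are indeed formed in the relevant categories and are preserved by $\widetilde{F}$. Here ${}_{D(A)}\Mod$ is the representation category of a finite-dimensional Hopf algebra, hence it is rigid, so $I^A(\boldsymbol{k})$ has a genuine left dual and the symbol $(-)^{\ast}$ is meaningful; the same holds for $B$. The preservation statement is the standard categorical fact that any (braided) monoidal equivalence carries a left dual of $X$ to a left dual of its image—this is formal and follows from transporting the evaluation/coevaluation along the monoidal structure isomorphisms of $\widetilde{F}$ and invoking uniqueness of duals. I would simply cite this as the indicated ``fact that a monoidal equivalence preserves left duals'' rather than reproduce the diagram chase. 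No further computation is needed, since all the representation-theoretic content has already been absorbed into Proposition~\ref{2-5} and Theorem~\ref{thm:3-8}.
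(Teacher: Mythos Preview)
Your proposal is correct and follows exactly the same approach as the paper: the paper's one-line proof simply invokes Part~(2) of Theorem~\ref{thm:3-8} together with the fact that a monoidal equivalence preserves left duals, with Proposition~\ref{2-5} implicitly supplying the identification ${}_{D(A)}A^{\ast\mathrm{cop}}\cong (I^A(\boldsymbol{k}))^{\ast}$. You have simply made explicit the steps (evaluating at the unit object, using $F(\boldsymbol{k})\cong\boldsymbol{k}$) that the paper leaves to the reader.
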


We have introduced the notion of a Schr\"{o}dinger object to explain categorical nature of the Schr\"{o}dinger module. 
The following theorem claims that also the dual Schr\"{o}dinger object can be interpreted in terms of a Schr\"{o}dinger object:

\begin{thm}
  \label{thm:dual-Sch-mod-categorical-interp}
  $\mathbf{S}_A := \Xi_A^{-1}({}_{D(A)}A^{\ast \mathrm{cop}}) \in \mathcal{Z}(\boldsymbol{\sf M}^A)$ is a Schr\"{o}dinger object for $\boldsymbol{\sf M}^A$.
\end{thm}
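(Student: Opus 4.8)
The plan is to verify the defining property of a Schr\"{o}dinger object directly, namely to produce an isomorphism
\begin{equation*}
  \mathrm{Hom}_{\mathcal{Z}(\Mod^A)}(\mathbf{X}, \mathbf{S}_A) \cong \mathrm{Hom}_{\Mod^A}(\Pi_{\Mod^A}(\mathbf{X}), \mathbb{I})
\end{equation*}
natural in $\mathbf{X} \in \mathcal{Z}(\Mod^A)$; uniqueness of $\mathbf{S}_A$ is then automatic by the Yoneda argument recorded after the definition. First I would transport everything to ${}_{D(A)}\Mod$ along the isomorphism $\Xi_A$ of~\eqref{eq:cat-iso-Z-YD-Xi}. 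On the underlying (non-monoidal) categories $\Xi_A$ is an equivalence $\mathcal{Z}(\Mod^A) \to {}_{D(A)}\Mod$, and Hom-sets are insensitive to passing to reverse categories, so, since $\mathbf{S}_A = \Xi_A^{-1}({}_{D(A)}A^{\ast\mathrm{cop}})$, the left-hand side is naturally $\mathrm{Hom}_{{}_{D(A)}\Mod}(\Xi_A(\mathbf{X}), {}_{D(A)}A^{\ast\mathrm{cop}})$. The decisive input is then Proposition~\ref{2-5}, which identifies the dual Schr\"{o}dinger module with the \emph{left} dual $(I^A(\boldsymbol{k}))^{\ast}$ of $I^A(\boldsymbol{k})$.

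The heart of the argument is the chain of natural isomorphisms
\begin{align*}
\mathrm{Hom}_{\mathcal{Z}(\Mod^A)}(\mathbf{X}, \mathbf{S}_A)
& \cong \mathrm{Hom}_{{}_{D(A)}\Mod}(\Xi_A(\mathbf{X}),\, {}_{D(A)}A^{\ast\mathrm{cop}}) \\
& \cong \mathrm{Hom}_{{}_{D(A)}\Mod}(\Xi_A(\mathbf{X}),\, (I^A(\boldsymbol{k}))^{\ast}) \\
& \cong \mathrm{Hom}_{{}_{D(A)}\Mod}(\Xi_A(\mathbf{X}) \otimes I^A(\boldsymbol{k}),\, \mathbb{I}) \\
& \cong \mathrm{Hom}_{{}_{D(A)}\Mod}(I^A(\boldsymbol{k}) \otimes \Xi_A(\mathbf{X}),\, \mathbb{I}) \\
& \cong \mathrm{Hom}_{{}_{D(A)}\Mod}(I^A(R^A(\Xi_A(\mathbf{X}))),\, \mathbb{I}) \\
& \cong \mathrm{Hom}_{\Mod^A}(R^A(\Xi_A(\mathbf{X})),\, \mathbb{I}).
\end{align*}
Here the second line is Proposition~\ref{2-5}; the third uses that $I^A(\boldsymbol{k}) \cong A$ is finite-dimensional, hence dualizable in the rigid category ${}_{D(A)}\Mod$, via the left-dual adjunction $\mathrm{Hom}(Z \otimes W, \mathbb{I}) \cong \mathrm{Hom}(Z, W^{\ast})$; the fourth is the braiding of ${}_{D(A)}\Mod$, which interchanges the two tensor factors; the fifth is Lemma~\ref{2-10}(2) applied with $V = \boldsymbol{k}$ and $M = \Xi_A(\mathbf{X})$, together with $\boldsymbol{k} \otimes R^A(M) \cong R^A(M)$; and the last is the adjunction $I^A \dashv R^A$ combined with $R^A(\mathbb{I}) = \mathbb{I}$. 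Since $R^A \circ \Xi_A = \Pi_{\Mod^A}$ by~\eqref{eq:cat-iso-Z-YD-res-comod}, the final term is $\mathrm{Hom}_{\Mod^A}(\Pi_{\Mod^A}(\mathbf{X}), \mathbb{I})$, exactly as required.

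The step that genuinely requires care is the bookkeeping of left versus right duals against the reverse monoidal structure. Proposition~\ref{2-5} delivers the \emph{left} dual of $I^A(\boldsymbol{k})$, and the left-dual adjunction therefore forces the orientation $\Xi_A(\mathbf{X}) \otimes I^A(\boldsymbol{k})$; on the other hand Lemma~\ref{2-10}(2) is available only with the induced object $I^A(\boldsymbol{k})$ on the \emph{left}. Reconciling these two orientations is precisely what makes the braiding step necessary, and I would check carefully that it is legitimate, i.e. that $I^A(\boldsymbol{k})$ is dualizable and that every displayed arrow is natural in $\mathbf{X}$ (each is the action of a functor, a standard duality or adjunction isomorphism, or a component of the braiding, so naturality is routine). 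A conceptually cleaner but equivalent alternative would be to exhibit a right adjoint to $\Pi_{\Mod^A}$ (equivalently a coinduction right adjoint to $R^A$) and invoke Part~(1) of Lemma~\ref{lem:Sch-obj-invariance}; identifying the value of that right adjoint at $\mathbb{I}$ with $\mathbf{S}_A$ again reduces to Proposition~\ref{2-5}.
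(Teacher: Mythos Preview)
Your proof is correct and follows essentially the same route as the paper: transport along $\Xi_A$, invoke Proposition~\ref{2-5} to identify ${}_{D(A)}A^{\ast\mathrm{cop}}$ with $(I^A(\boldsymbol{k}))^{\ast}$, use duality and Lemma~\ref{2-10}(2) to rewrite as $\mathrm{Hom}_{D(A)}(I^A(R^A(-)),\boldsymbol{k})$, and finish with the adjunction $I^A \dashv R^A$ together with~\eqref{eq:cat-iso-Z-YD-res-comod}. The only cosmetic difference is that the paper collapses your duality-plus-braiding steps into the single line $\mathrm{Hom}_{D(A)}(X,{}_{D(A)}A^{\ast\mathrm{cop}})\cong \mathrm{Hom}_{D(A)}(I^A(\boldsymbol{k})\otimes X,\boldsymbol{k})$, whereas you make the tensor reorientation explicit.
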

\begin{proof}
  For simplicity, write $\mathrm{Hom}_{\boldsymbol{\sf M}^A}(X, Y)$ as ${\rm Hom}^A(X, Y)$. 
By Proposition~\ref{2-5} and Lemma~\ref{2-10}, we obtain natural isomorphisms
  \begin{align*}
    {\rm Hom}_{D(A)}(X, {}_{D(A)}A^{\ast{\rm cop}})
    & \cong {\rm Hom}_{D(A)}(I^A(\boldsymbol{k}) \otimes X, \boldsymbol{k}) \\
    & \cong {\rm Hom}_{D(A)}(I^A(\boldsymbol{k} \otimes R^A(X)), \boldsymbol{k})
    \cong {\rm Hom}^A(R^A(X), \boldsymbol{k})
  \end{align*}
  for $X \in {}_{D(A)}\boldsymbol{\sf M}$. Hence there are natural isomorphisms
  \begin{align*}
    \mathrm{Hom}_{\mathcal{Z}(\boldsymbol{\sf M}^A)}(\mathbf{X}, \mathbf{S}^A)
    & \cong \mathrm{Hom}_{D(A)}(\Xi_A(\mathbf{X}), {}_{D(A)}A^{\ast\mathrm{cop}}) \\
    & \cong \mathrm{Hom}^A \big( (R^A \circ \Xi_A)(\mathbf{X}), \boldsymbol{k} \big)
    \cong \mathrm{Hom}^A \big( \Pi_{\boldsymbol{\sf M}^A}(\mathbf{X}), \boldsymbol{k} \big)
  \end{align*}
  for $\mathbf{X} \in \mathcal{Z}(\boldsymbol{\sf M}^A)$. This shows that $\mathbf{S}_A \in \mathcal{Z}(\boldsymbol{\sf M}^A)$ is a Schr\"{o}dinger object for $\boldsymbol{\sf M}^A$.
\end{proof}

The following theorem is a slight weak version of Proposition~\ref{2-7} and its corollary. Emphasizing the role of the Schr\"odinger object, we now reexamine the proof of Proposition \ref{2-7} and its corollary.

\begin{thm}\label{3-10}
  For a finite-dimensional Hopf algebra $A$ over $\boldsymbol{k}$, we denote by
  \begin{equation}\label{eq3-7}
    F_A: ({}_{D(A^*)} \Mod)^{\mathrm{rev}} = {}_{D(A^{*})^{\mathrm{cop}}} \Mod \longrightarrow {}_{D(A)}\Mod
  \end{equation}
  the isomorphism of $\boldsymbol{k}$-linear braided monoidal categories induced by the isomorphism $\phi_A$ given by (\ref{eq2-12}). Then there are isomorphisms
  \begin{equation*}
    F_A \big( \,{}_{D(A^{\ast})}A^{\ast} \big) \cong {}_{D(A)}A^{\ast\mathrm{cop}}
    \text{\quad and \quad}
    F_A \big( \,{}_{D(A^{\ast})}(A^{\ast})^{\ast\mathrm{cop}} \big) \cong {}_{D(A)}A.
  \end{equation*}
\end{thm}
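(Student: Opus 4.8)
The plan is to prove both isomorphisms from the observation that $F_A$ is, by its very construction, one of the factors of the isomorphisms $\Xi_A$ and $\Psi_{A^*}$ that identify the (dual) Schr\"odinger modules with Schr\"odinger objects; uniqueness of Schr\"odinger objects (Yoneda, as noted after their definition) then finishes the argument. Throughout, $\mathbf{S}^A$ denotes the object $\Xi_A^{-1}({}_{D(A)}A^{\ast\mathrm{cop}})$ of Theorem~\ref{thm:dual-Sch-mod-categorical-interp} (a Schr\"odinger object for $\boldsymbol{\sf M}^A$), and $\mathbf{S}_{A^*}$ the object $\Psi_{A^*}^{-1}({}_{D(A^*)}A^*)$ of Theorem~\ref{thm:Sch-mod-invariance}(1) applied to $A^*$ (a Schr\"odinger object for ${}_{A^*}\Mod$).

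For the first isomorphism $F_A(\,{}_{D(A^{\ast})}A^{\ast}) \cong {}_{D(A)}A^{\ast\mathrm{cop}}$, I would begin by unwinding the definition of $\Xi_A$ in~\eqref{eq:cat-iso-Z-YD-Xi}. Writing $G_A\colon \boldsymbol{\sf M}^A \xrightarrow{\cong} {}_{A^{\ast}}\Mod$ for the canonical identification, that chain shows, on underlying objects, $\Xi_A = F_A \circ \Psi_{A^*} \circ \mathcal{Z}(G_A)$, whence $F_A \circ \Psi_{A^*} = \Xi_A \circ \mathcal{Z}(G_A^{-1})$ by Lemma~\ref{lem:Z-functoriality}. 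Since ${}_{D(A^*)}A^* = \Psi_{A^*}(\mathbf{S}_{A^*})$, applying $F_A$ yields $F_A(\,{}_{D(A^{\ast})}A^{\ast}) \cong \Xi_A(\mathcal{Z}(G_A^{-1})(\mathbf{S}_{A^*}))$. As $G_A^{-1}$ is a monoidal equivalence, Lemma~\ref{lem:Sch-obj-invariance}(2) shows $\mathcal{Z}(G_A^{-1})(\mathbf{S}_{A^*})$ is a Schr\"odinger object for $\boldsymbol{\sf M}^A$, hence isomorphic to $\mathbf{S}^A$ by uniqueness, and $\Xi_A(\mathbf{S}^A) = {}_{D(A)}A^{\ast\mathrm{cop}}$ gives the claim.

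For the second isomorphism $F_A(\,{}_{D(A^{\ast})}(A^{\ast})^{\ast\mathrm{cop}}) \cong {}_{D(A)}A$, the most economical route is to apply the first isomorphism to $A^{\ast}$ in place of $A$, obtaining $F_{A^*}(\,{}_{D(A^{**})}A^{**}) \cong {}_{D(A^*)}(A^*)^{\ast\mathrm{cop}}$, and then to apply $F_A$. The point is that the composite $F_A \circ F_{A^*}$ is the pull-back along $\phi_{A^*}\circ\phi_A$, which by Lemma~\ref{lem:Drinfeld-double-iso} equals $D(\iota_A)$; thus $F_A \circ F_{A^*}$ is the canonical identification ${}_{D(A^{**})}\Mod \cong {}_{D(A)}\Mod$ induced by $\iota_A$. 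Since the Schr\"odinger module is natural under the assignment $f \mapsto D(f)$, this identification sends ${}_{D(A^{**})}A^{**}$ to ${}_{D(A)}A$, which yields the second isomorphism.

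The main obstacle I anticipate is the bookkeeping of the reversed monoidal categories $(-)^{\mathrm{rev}}$ and the string of identifications: one must confirm that a Schr\"odinger object is insensitive to passage to $\mathcal{C}^{\mathrm{rev}}$ (clear, since its universal property refers only to hom-sets, the forgetful functor, and the unit object, all unchanged by $\mathrm{rev}$), and that the underlying-functor identity $\Xi_A = F_A \circ \Psi_{A^*} \circ \mathcal{Z}(G_A)$ holds exactly as written. For the second isomorphism the delicate step is the double-dual collapse: recognizing $F_A \circ F_{A^*}$ as the canonical identification via Lemma~\ref{lem:Drinfeld-double-iso}, and verifying that it carries the Schr\"odinger module over $D(A^{**})$ to the one over $D(A)$.
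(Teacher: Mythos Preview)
Your proposal is correct and follows essentially the same approach as the paper: for the first isomorphism you factor $F_A$ through $\Xi_A$, $\Psi_{A^*}$, and $\mathcal{Z}(G_A)$ and invoke the uniqueness of Schr\"odinger objects via Lemma~\ref{lem:Sch-obj-invariance}(2), exactly as the paper does; for the second you apply the first to $A^*$ and collapse the double dual via Lemma~\ref{lem:Drinfeld-double-iso}, which is precisely the paper's ``cf.\ the proof of Corollary~\ref{2-8}'' step. Your treatment is in fact slightly more explicit about the bookkeeping (the $\mathrm{rev}$ insensitivity of the Schr\"odinger-object property and the naturality of the Schr\"odinger module under $D(\iota_A)$), but the underlying argument is identical.
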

\begin{proof}
  To establish the first isomorphism, we first note that the category isomorphism $F_A$ is expressed as follows ({\it cf}. \eqref{eq:cat-iso-Z-YD-Xi}):
  \begin{equation*}
    \begin{CD}
      F_A: {}_{D(A^{\ast})^{\mathrm{cop}}} \Mod {\ \cong \ }
      @.
      ({}_{D(A^{\ast})}\Mod)^{\mathrm{rev}}
      @>{\Psi_{A^{\ast}}}>> \mathcal{Z}({}_{A^{\ast}}\Mod)^{\mathrm{rev}}
      @>{\cong}>> \mathcal{Z}(\Mod^A)^{\mathrm{rev}}
      @>{\Xi_A}>> {}_{D(A)} \Mod,
    \end{CD}
  \end{equation*}
  where the arrow $\mathcal{Z}({}_{A^{\ast}}\Mod)^{\mathrm{rev}} \longrightarrow \mathcal{Z}(\Mod^A)^{\mathrm{rev}}$ is the isomorphism induced by ${}_{A^{\ast}}\Mod \cong \Mod^A$. 
Theorem~\ref{thm:Sch-mod-invariance} tells us that ${}_{D(A^{\ast})}A^{\ast}$ is an object corresponding to a Schr\"{o}dinger object for ${}_{A^{\ast}}\Mod$. 
On the other hand, Theorem~\ref{thm:dual-Sch-mod-categorical-interp} tells us that ${}_{D(A)}A^{\ast}$ is an object corresponding to a Schr\"{o}dinger object for $\Mod^A$. 
  Hence, by Lemma~\ref{lem:Sch-obj-invariance}, we have $F_A({}_{D(A^{\ast})}A^{\ast}) \cong {}_{D(A)}A^{\ast\mathrm{cop}}$.

  Once the first isomorphism is obtained, the second isomorphism can be obtained by replacing $A$ with $A^*$ ({\it cf}. the proof of Corollary~\ref{2-8}).
\end{proof}

\par \medskip 
\section{Applications}
\par 
Motivated by the construction of quantum representations of the $n$-strand braid group $B_n$ 
due to Reshetikhin and Turaev \cite{RT}, 
a family of monoidal Morita invariants of a finite-dimensional Hopf algebra,
which is indexed by braids, can be obtained from the Schr\"{o}dinger module.

Let $A$ be a finite-dimensional Hopf algebra. 
It turns out that the invariant associated with the identity element $\boldsymbol{1} \in B_1$ is equal to the quantum dimension the Schr\"{o}dinger module ${}_{D(A)}A$ in the sense of Majid \cite{MajidBook}, 
and thus equal to $\mathrm{Tr}(S^2)$ by \cite[Example 9.3.8]{MajidBook}, 
where $S$ is the antipode of $A$ (see Bulacu-Torrecillas \cite{BulacuTorrecillas} for the case of quasi-Hopf algebras). 
As is well-known, $\mathrm{Tr}(S^2)$ has the following representation-theoretic meaning: 
$\mathrm{Tr}(S^2) \ne 0$ if and only if $A$ is semisimple and cosemisimple \cite{Rad1994}. 
In this section, we show that the invariants derived from other braids, like \raisebox{-0.4cm}{\includegraphics[height=1cm]{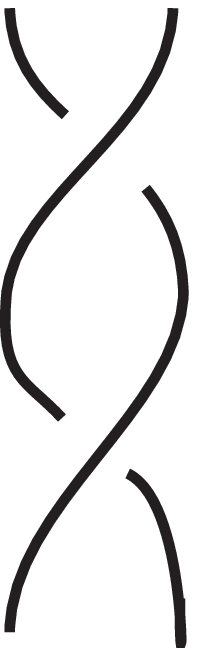}}, involve further interesting results connecting with representation theory.

The invariant associated with a braid $\boldsymbol{b}$ is, roughly speaking, defined by \lq\lq coloring'' the closure of $\boldsymbol{b}$ by the Schr\"{o}dinger module 
as if we were computing the quantum invariant of a (framed) link. 
Such an operation is not allowed in general since ${}_{D(A)}\Mod$ may not be a ribbon category. 
So we will use the (partial) braided trace, introduced below, to define invariants.

\par \smallskip 
\subsection{Partial traces in braided  monoidal categories} 

We briefly recall the concept of duals in a monoidal category $\mathcal{C}=(\mathcal{C}, \otimes , \mathbb{I} , a, r,l)$. 
For an object $X\in \mathcal{C}$ the triple $(X^{\ast}, e_X, n_X)$ consisting of an object $X^{\ast}\in \mathcal{C}$ and morphisms 
$e_X: X^{\ast}\otimes X \longrightarrow \mathbb{I} ,\  n_X: \mathbb{I}\longrightarrow X\otimes X^{\ast}$ in $\mathcal{C}$ is said to be a \textit{left dual} if two compositions 
\begin{align*}
&X\xrightarrow {\ l^{-1}\ } \mathbb{I} \otimes X \xrightarrow{n_X\otimes {\rm id}} (X\otimes X^{\ast})\otimes X \xrightarrow{\ a\ }  X\otimes (X^{\ast}\otimes X) \xrightarrow{{\rm id}\otimes e_X} X\otimes \mathbb{I} \xrightarrow{\ r\ } X,\\[0.1cm]  
&X^{\ast}\xrightarrow {\ r^{-1}\ } X^{\ast}\otimes \mathbb{I} \xrightarrow{{\rm id}\otimes n_X} X^{\ast}\otimes (X\otimes X^{\ast}) \xrightarrow{a^{-1}}  (X^{\ast}\otimes X)\otimes X^{\ast} \xrightarrow{e_X\otimes {\rm id}} \mathbb{I} \otimes X \xrightarrow{\ l\ }X
\end{align*}
are equal to ${\rm id}_X$ and ${\rm id}_{X^{\ast}}$, respectively. 
If all objects in $\mathcal{C}$ have left duals, then the monoidal category is called \textit{left rigid}. 
\par 
From now on, all monoidal categories are assumed to be strict although almost all definitions and results are not needed this assumption. 
\par 
Let $(\mathcal{C}, c)$ is a left rigid braided monoidal category. 
We choose a left dual $(X^{\ast}, e_X, n_X)$ for each object $X\in \mathcal{C}$. 
Let $f: X\otimes Y \longrightarrow X\otimes Z$ be an morphism in $\mathcal{C}$. 
Then the following composition  $\underline{{\rm Tr}}_c^{l, X}(f) : Y\longrightarrow Z$ can be defined: 
\begin{align*}
Y  \xrightarrow{\ l_Y^{-1}\ } 
\mathbb{I} \otimes Y & \xrightarrow{\ n_X\otimes {\rm id}_Y\ } 
X\otimes X^{\ast} \otimes Y \xrightarrow{\ c_{X^{\ast}, X}^{-1}\otimes {\rm id}_Y\ }  
X^{\ast}\otimes X \otimes Y \\ 
& \xrightarrow{ \ {\rm id}\otimes f\ } 
X^{\ast}\otimes X \otimes Z \xrightarrow{\ e_X\otimes {\rm id}_Z\ } 
\mathbb{I} \otimes Z \xrightarrow{\ l_Z\ } Z. 
\end{align*}

The morphism $\underline{{\rm Tr}}_c^{l, X}(f) : Y\longrightarrow Z$ is said to be the \textit{left partial braided trace} of $f$ on $X$. 
Similarly, for a morphism $f: Y\otimes X \longrightarrow Z\otimes X$, the \textit{right partial braided trace} of $f$ on $X$ is defined by the composition 
\begin{align*}
Y  \xrightarrow{\ r_Y^{-1}\ } 
Y\otimes \mathbb{I}  & \xrightarrow{\ {\rm id}_Y\otimes n_X\ } 
Y\otimes X\otimes X^{\ast} \xrightarrow{\ f\otimes {\rm id}\ } 
Z\otimes X \otimes X^{\ast} \\ 
& \xrightarrow{ \ {\rm id}\otimes c_{X, X^{\ast}}\ } 
Z\otimes X^{\ast} \otimes X  \xrightarrow{\ {\rm id}_Z\otimes e_X\ } 
Z\otimes \mathbb{I}\xrightarrow{\ r_Z\ } Z.
\end{align*}

\begin{figure}[htbp]
\centering 
\includegraphics[width=10cm]{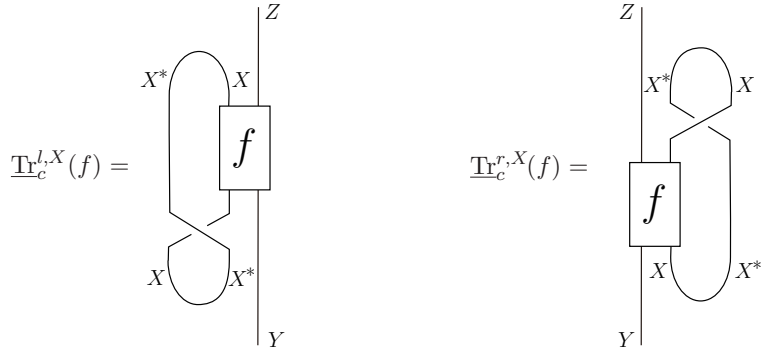}
\caption{the left and right partial traces}\label{Fig1}
\end{figure}

The left and right partial braided traces on $X$ are does not depend on the choice of left duals of $X$. 
\par 
For morphisms $f: X\otimes Y\longrightarrow X\otimes Z$ and $g: Y^{\prime}\longrightarrow Y,\ h: Z\longrightarrow Z^{\prime}$, we have 
\begin{equation}\label{eq4-2-1}
\underline{{\rm Tr}}_c^{l, X}\bigl( ({\rm id}_X\otimes h)\circ f\circ ({\rm id}_X\otimes g)\bigr) =
h\circ \underline{{\rm Tr}}_c^{l, X}(f)\circ g : Y\longrightarrow Z, 
\end{equation}
and for morphisms $f^{\prime}: Y\otimes X\longrightarrow Y\otimes X$ and $g: Y^{\prime}\longrightarrow Y,\ h: Z\longrightarrow Z^{\prime}$, we have 
\begin{equation}\label{eq4-2-2}
\underline{{\rm Tr}}_c^{r, X}\bigl( (h\otimes {\rm id}_X)\circ f^{\prime}\circ (g\otimes {\rm id}_X)\bigr) =
h\circ \underline{{\rm Tr}}_c^{r, X}(f^{\prime})\circ g : Y\longrightarrow Z. 
\end{equation}

\par 
For an endomorphism $f: X\longrightarrow X$ in $(\mathcal{C}, c)$, 
the left braided trace $\underline{{\rm Tr}}_c^{l,X}(f)$ and the right braided trace  $\underline{{\rm Tr}}_c^{r,X}(f)$ are defined by 
$\underline{{\rm Tr}}_c^{l,X}(f):=\underline{{\rm Tr}}_c^{l, X}(f\otimes {\rm id}_{ \mathbb{I}})$ and $\underline{{\rm Tr}}_c^r(f):=\underline{{\rm Tr}}_c^{r, X}({\rm id}_{ \mathbb{I}}\otimes f)$.  
They coincide with the following compositions, respectively. 
\begin{align*}
\underline{{\rm Tr}}_c^l(f) & :   \mathbb{I} \xrightarrow{\ n_X\ }X\otimes X^{\ast}\xrightarrow{c_{X^{\ast}, X}^{-1}}
X^{\ast}\otimes X \xrightarrow{{\rm id}\otimes f} X^{\ast}\otimes X\xrightarrow{\ e_X\ } \mathbb{I}, \\ 
\underline{{\rm Tr}}_c^r (f) & :  \mathbb{I} \xrightarrow{\ n_X\ }X\otimes X^{\ast}\xrightarrow{f\otimes {\rm id}} 
X\otimes X^{\ast}\xrightarrow{c_{X,X^{\ast}}}X^{\ast}\otimes X\xrightarrow{\ e_X\ } \mathbb{I}. 
\end{align*}

\par \smallskip  
\begin{lem}\label{4-3}
Let $(\mathcal{C}, c)$ and $(\mathcal{D}, c')$ be left rigid braided monoidal categories, and 
$(F,\phi ,\omega ):\mathcal{C}\longrightarrow \mathcal{D}$ be a braided monoidal functor. Then
\par 
(1) For any morphism $f: X\otimes Y\longrightarrow X\otimes Z$ in $\mathcal{C}$ 
\begin{equation}\label{eq4-0}
F\bigl( \underline{{\rm Tr}}_c^{l, X}(f)\bigr) =\underline{{\rm Tr}}_{c^{\prime}}^{l, F(X)}\bigl( \phi _{X, Z}^{-1}\circ F(f)\circ \phi _{X,Y}\bigr) .
\end{equation}
\par 
(2)  For any morphism  $f: Y\otimes X\longrightarrow Z\otimes X$ in $\mathcal{C}$ 
\begin{equation}
F\bigl( \underline{{\rm Tr}}_c^{r, X}(f)\bigr) =\underline{{\rm Tr}}_{c^{\prime}}^{r, F(X)}\bigl(\phi _{Z, X}^{-1}\circ F(f)\circ \phi _{Y, X}\bigr) .
\end{equation}
\end{lem}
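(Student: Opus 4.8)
The plan is to prove (1) directly from the definition of the left partial braided trace; part (2) is then entirely analogous. Since all monoidal categories are assumed strict, I suppress the associativity and unit constraints of $\mathcal{C}$ and $\mathcal{D}$, so that the only comparison morphisms appearing are the monoidal structure $\phi$ and the unit $\omega$ of $F$, which survive precisely because $F$ is not strict. Recall that $F$ is strong monoidal, so $\phi$ and $\omega$ are invertible.

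First I would fix, for each object $X$, a convenient left dual of $F(X)$ in $\mathcal{D}$. Since $F$ is strong monoidal, the triple $(F(X^{\ast}), \widetilde{e}, \widetilde{n})$ with
$$\widetilde{e} = \omega^{-1}\circ F(e_X)\circ \phi_{X^{\ast},X}, \qquad \widetilde{n} = \phi_{X,X^{\ast}}^{-1}\circ F(n_X)\circ \omega$$
is a left dual of $F(X)$; the two zig-zag identities follow from the coherence of $\phi$ for triple tensor products together with the functoriality of $F$ applied to the zig-zag identities for $(X^{\ast}, e_X, n_X)$. This is the standard fact that strong monoidal functors preserve left duals. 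Because the partial braided trace does not depend on the chosen left dual, I am free to compute $\underline{{\rm Tr}}_{c'}^{l, F(X)}$ using this particular dual.

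Next I would expand both sides. Applying $F$ to the defining composition of $\underline{{\rm Tr}}_c^{l,X}(f)$ and using functoriality expresses $F\bigl(\underline{{\rm Tr}}_c^{l,X}(f)\bigr)$ as a composite of $F(e_X\otimes {\rm id})$, $F({\rm id}\otimes f)$, $F(c_{X^{\ast},X}^{-1}\otimes {\rm id})$ and $F(n_X\otimes {\rm id})$. On the other side, I would write out $\underline{{\rm Tr}}_{c'}^{l, F(X)}\bigl(\phi_{X,Z}^{-1}\circ F(f)\circ \phi_{X,Y}\bigr)$ relative to the dual $(F(X^{\ast}),\widetilde{e},\widetilde{n})$. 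The core step is to move each $F$ of a tensor product past the comparison morphisms: the naturality of $\phi$ replaces $F(g\otimes h)\circ(\text{\emph{$\phi$'s}})$ by $(\text{\emph{$\phi$'s}})\circ\bigl(F(g)\otimes F(h)\bigr)$, and the coherence of $\phi$ reassociates the triple tensor products freely. The one genuinely structural input is the braiding compatibility of $F$, in the form
$$\phi_{X^{\ast},X}^{-1}\circ F(c_{X^{\ast},X}^{-1})\circ \phi_{X,X^{\ast}} = \bigl(c'_{F(X^{\ast}),F(X)}\bigr)^{-1},$$
which converts the factor $F(c_{X^{\ast},X}^{-1})$ produced by the expansion into the inverse braiding $c'$ demanded by the right-hand side.

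After these substitutions every adjacent pair $\phi\circ\phi^{-1}$, and the pair $\omega\circ\omega^{-1}$, cancels, and the surviving composite is exactly the right-hand side of \eqref{eq4-0}. I expect the main obstacle to be bookkeeping: keeping track of which tensor factors each $\phi_{\cdot,\cdot}$ and $\omega$ act on across the triple tensor products $X^{\ast}\otimes X\otimes Y$ and $X^{\ast}\otimes X\otimes Z$, and checking that the $\phi$/$\phi^{-1}$ pairs inserted to turn $F$ of a tensor product into a tensor product of $F$'s line up so as to cancel. A string-diagram rendering, as in Figure~\ref{Fig1}, would make this cancellation transparent and could replace the algebraic chase. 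Part (2) is proved in the same way, using instead the right-hand zig-zag and the compatibility of $F$ with $c'_{F(X),F(X^{\ast})}$ in place of its inverse.
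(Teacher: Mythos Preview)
Your proposal is correct and follows essentially the same approach as the paper: both construct the dual $(F(X^{\ast}),\widetilde e,\widetilde n)$ of $F(X)$ via the strong monoidal structure of $F$ with exactly the formulas you wrote, then compute the right-hand partial trace with respect to this dual and compare with $F$ applied to the defining composite of $\underline{{\rm Tr}}_c^{l,X}(f)$. The paper's proof is terser --- it simply asserts that the computation goes through --- whereas you have spelled out the role of naturality/coherence of $\phi$ and the braided functor condition $\phi_{X^{\ast},X}^{-1}\circ F(c_{X^{\ast},X}^{-1})\circ\phi_{X,X^{\ast}}=(c'_{F(X^{\ast}),F(X)})^{-1}$; this is exactly the content the paper suppresses.
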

\begin{proof}
For each $X\in \mathcal{C}$ we choose a left dual $(X^{\ast},e_X, n_X)$. Then 
$(F(X^{\ast}), e_{F(X)}^{\prime} ,n_{F(X)}^{\prime})$ is a left dual of $F(X)$, where 
\begin{align*}
e_{F(X)}^{\prime}&:=\omega ^{-1}\circ F(e_X)\circ \phi _{X^{\ast},X}: F(X^{\ast})\otimes F(X)\longrightarrow  \mathbb{I}^{\prime}, \\ 
n_{F(X)}^{\prime}&:=\phi _{X,X^{\ast}}^{-1}\circ F(n_X)\circ \omega : \mathbb{I}^{\prime}\longrightarrow F(X)\otimes F(X^{\ast}). 
\end{align*}
By using this left dual of $F(X)$ and computing the partial braided trace $\underline{{\rm Tr}}_c^{l, F(X)}\bigl( \phi _{X, Z}^{-1}\circ F(f)\circ \phi _{X,Y}\bigr)$, we have the desired equation (\ref{eq4-0}).
Part (2) is also proved as in the same manner with Part (1). 
\end{proof}

\par \medskip 
Let us describe a relationship between left and right partial traces. 
Let $c$ be a braiding for a monoidal category $\mathcal{C}$. 
Then, the collection  $\bar{c}$ consisting of isomorphisms $\bar{c}_{X,Y}:=c_{Y,X}^{-1}: X\otimes Y\longrightarrow Y\otimes X$ over all pairs $(X,Y)$ of objects in  $\mathcal{C}$ is also a braiding for  $\mathcal{C}$.  

\par 
Let $(\mathcal{C}, c)$ be a left rigid braided monoidal category chosen left duals $(X^{\ast}, e_X, n_X)$ for all objects $X$ in $\mathcal{C}$. 
Then, for two objects $X, Y$  in $\mathcal{C}$ there is a natural isomorphism $j_{X,Y}: Y^{\ast} \otimes X^{\ast} \longrightarrow (X\otimes Y)^{\ast}$ such that $e_{X\otimes Y}\circ (j_{X,Y}\circ {\rm id}_{X\otimes Y})=e_Y\circ ({\rm id}_{Y^{\ast}}\otimes e_X\otimes {\rm id}_Y)$ \cite{FY}. 
For any morphism $f: X\longrightarrow Y$ in $(\mathcal{C}, c)$, 
there is a unique morphism ${}^t\kern-0.1em f: Y^{\ast} \longrightarrow X^{\ast}$ in $\mathcal{C}$, which is characterized by $e_X\circ ({}^t\kern-0.1em f\otimes {\rm id}_X)=e_Y\circ ({\rm id}_{Y^{\ast}}\otimes f)$. 
The morphism ${}^t\kern-0.1em f$ is called the left transpose of $f$. 
The left and right partial traces are related as follows. 

\par \smallskip 
\begin{lem}\label{4-4}
For any morphism $f: X\otimes Y \longrightarrow X\otimes Z$ in a left rigid braided monoidal category $(\mathcal{C}, c)$,  
$$\underline{{\rm Tr}}_c^{r, X^{\ast}}(j_{X,Y}^{-1}\circ {}^t\kern-0.1em f\circ j_{X,Z})={}^t\kern-0.1em \bigl(\underline{{\rm Tr}}_{\bar{c}}^{l,X}(f)\bigr).$$
\end{lem}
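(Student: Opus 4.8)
The plan is to reduce the claimed identity to a single equation between morphisms $Z^{*}\otimes Y\to\mathbb{I}$ by invoking the universal property of the left transpose, and then to evaluate both sides explicitly. Recall that for $g\colon Y\to Z$ the transpose ${}^t\kern-0.1em g\colon Z^{*}\to Y^{*}$ is the \emph{unique} morphism with $e_{Y}\circ({}^t\kern-0.1em g\otimes{\rm id}_{Y})=e_{Z}\circ({\rm id}_{Z^{*}}\otimes g)$. Writing $g:=\underline{{\rm Tr}}_{\bar c}^{l,X}(f)$ and $L:=\underline{{\rm Tr}}_{c}^{r,X^{*}}(j_{X,Y}^{-1}\circ{}^t\kern-0.1em f\circ j_{X,Z})$, it therefore suffices to prove the single equation $e_{Y}\circ(L\otimes{\rm id}_{Y})=e_{Z}\circ({\rm id}_{Z^{*}}\otimes g)$; the lemma then follows by uniqueness. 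Since $\bar c_{X^{*},X}^{-1}=c_{X,X^{*}}$, expanding the right-hand side from the definition of the left partial trace gives the explicit composite
\[
e_{Z}\circ({\rm id}_{Z^{*}}\otimes e_{X}\otimes{\rm id}_{Z})\circ({\rm id}_{Z^{*}\otimes X^{*}}\otimes f)\circ({\rm id}_{Z^{*}}\otimes c_{X,X^{*}}\otimes{\rm id}_{Y})\circ({\rm id}_{Z^{*}}\otimes n_{X}\otimes{\rm id}_{Y}).
\]

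To handle the left-hand side I would first record a characterization of $h:=j_{X,Y}^{-1}\circ{}^t\kern-0.1em f\circ j_{X,Z}\colon Z^{*}\otimes X^{*}\to Y^{*}\otimes X^{*}$. Starting from $e_{Y}\circ({\rm id}_{Y^{*}}\otimes e_{X}\otimes{\rm id}_{Y})\circ(h\otimes{\rm id}_{X\otimes Y})$ and applying, in turn, the defining relation of $j_{X,Y}$, the identity $j_{X,Y}\circ h={}^t\kern-0.1em f\circ j_{X,Z}$, the defining relation of ${}^t\kern-0.1em f$, the interchange law (to slide $f$ past $j_{X,Z}$), and finally the defining relation of $j_{X,Z}$, one obtains
\[
e_{Y}\circ({\rm id}_{Y^{*}}\otimes e_{X}\otimes{\rm id}_{Y})\circ(h\otimes{\rm id}_{X\otimes Y})=e_{Z}\circ({\rm id}_{Z^{*}}\otimes e_{X}\otimes{\rm id}_{Z})\circ({\rm id}_{Z^{*}\otimes X^{*}}\otimes f). \qquad (\star)
\]

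Next I would evaluate $L$. Because the right partial trace does not depend on the chosen left dual of $X^{*}$, I choose for $X^{*}$ the left dual $(X,\hat e_{X},\hat n_{X})$ with $\hat e_{X}=e_{X}\circ c_{X^{*},X}^{-1}$ and $\hat n_{X}=c_{X,X^{*}}\circ n_{X}$; this is precisely the right dual of $X$ produced by the standard braided-duality construction carried out in the braided category $(\mathcal{C},\bar c)$, so it is a genuine dual, and it is this choice (rather than the $c$-version) that accounts for the braiding $\bar c$ in the statement. With it, the braiding $c_{X^{*},X}$ occurring in $\underline{{\rm Tr}}_{c}^{r,X^{*}}$ cancels against $\hat e_{X}$, and $L$ collapses to $({\rm id}_{Y^{*}}\otimes e_{X})\circ(h\otimes{\rm id}_{X})\circ({\rm id}_{Z^{*}}\otimes c_{X,X^{*}})\circ({\rm id}_{Z^{*}}\otimes n_{X})$. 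Tensoring with ${\rm id}_{Y}$ and post-composing with $e_{Y}$, the three outermost factors of $e_{Y}\circ(L\otimes{\rm id}_{Y})$ are exactly the left-hand side of $(\star)$, while the two innermost factors are ${\rm id}_{Z^{*}}\otimes c_{X,X^{*}}\otimes{\rm id}_{Y}$ followed by ${\rm id}_{Z^{*}}\otimes n_{X}\otimes{\rm id}_{Y}$; substituting $(\star)$ therefore turns $e_{Y}\circ(L\otimes{\rm id}_{Y})$ into the explicit composite displayed above for $e_{Z}\circ({\rm id}_{Z^{*}}\otimes g)$. The two sides agree verbatim, which completes the reduction.

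The step I expect to demand the most care is the treatment of the double dual $X^{**}$ implicit in $\underline{{\rm Tr}}_{c}^{r,X^{*}}$: one must justify replacing it by $X$ through a concrete dual and, crucially, check that the dual induced by $\bar c$ is the one producing the cancellation, since the $c$-induced dual would instead leave an uncancelled double braiding $c_{X,X^{*}}\circ c_{X^{*},X}$. The verification of $(\star)$ is elementary but also needs attention, as it chains together the defining relations of both $j$-maps with that of the transpose.
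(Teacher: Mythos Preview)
Your argument is correct. The paper's own proof consists of a single sentence referring to a string-diagram figure (``The equation of the lemma is obtained from a graphical calculus depicted as in Figure~2''), so your route is genuinely different: you replace the picture by the universal property of the transpose, reducing the identity to the single equation $e_{Y}\circ(L\otimes{\rm id}_{Y})=e_{Z}\circ({\rm id}_{Z^{*}}\otimes g)$, and then verify this equation by the intermediate relation $(\star)$ together with an explicit choice of left dual for $X^{*}$. The step you flagged as delicate is indeed the crux, and you handle it correctly: the pair $\hat e_{X}=e_{X}\circ c_{X^{*},X}^{-1}$, $\hat n_{X}=c_{X,X^{*}}\circ n_{X}$ is exactly the right-dual structure obtained from the standard rule $e'=e\circ c$, $n'=c^{-1}\circ n$ applied in $(\mathcal C,\bar c)$, since $\bar c_{X,X^{*}}=c_{X^{*},X}^{-1}$ and $\bar c_{X^{*},X}^{-1}=c_{X,X^{*}}$; with that choice the $c_{X^{*},X}$ in the right partial trace cancels against $\hat e_{X}$ and the remaining $c_{X,X^{*}}$ from $\hat n_{X}$ matches $\bar c_{X^{*},X}^{-1}$ in $g$. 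What each approach buys: the graphical proof is shorter and makes the two partial traces visibly mirror-symmetric, while your algebraic proof is self-contained, does not rely on the reader parsing a figure, and isolates precisely where the choice of $\bar c$ (rather than $c$) enters.
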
 
\begin{proof}
The equation of the lemma is obtained from a graphical calculus depicted as in Figure~\ref{Fig2}. 

\begin{figure}[htbp]
\centering 
\includegraphics[width=16cm]{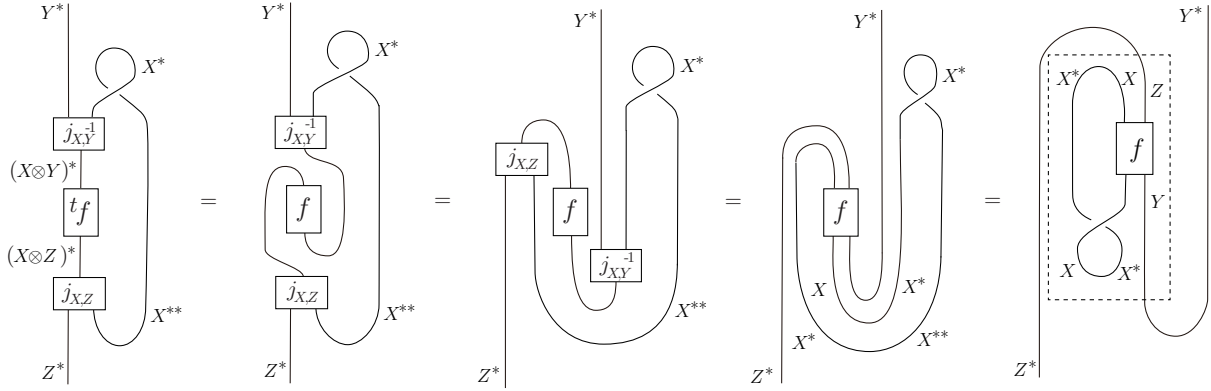}
\caption{a graphical calculus for the proof of Lemma~\ref{4-4}}
\label{Fig2}
\end{figure}
\end{proof} 

\par \medskip 
Let $M$ be an object in a strict left rigid braided monoidal category $(\mathcal{C}, c)$. 
For each endomorphism $f\in {\rm End}(M^{\otimes n})$ and each positive integer $k$ ($1\leq k\leq n$), 
we set 
$\underline{{\rm Tr}}_c^{l, k}(f):=\underline{{\rm Tr}}_c^{l, M^{\otimes k}}(f)$, $\underline{{\rm Tr}}_c^{r, k}(f):=\underline{{\rm Tr}}_c^{r, M^{\otimes k}}(f)$, and 
\begin{equation}\label{eq4-1}
\underline{\widetilde{{\rm Tr}}}_c^l(f) :=
\overbrace{(\underline{{\rm Tr}}_c^{l, 1}\circ \cdots \circ \underline{{\rm Tr}}_c^{l, 1})}^{n}( f) ,\quad 
\underline{\widetilde{{\rm Tr}}}_c^r(f)  :=
\overbrace{(\underline{{\rm Tr}}_c^{r, 1}\circ \cdots \circ \underline{{\rm Tr}}_c^{r, 1})}^{n}( f) . 
\end{equation} 

\par \smallskip 
The modified traces (\ref{eq4-1}) are preserved by a braided monoidal functor. More precisely:  

\par \smallskip 
\begin{prop}\label{4-5}
Let $\mathcal{C}=(\mathcal{C}, c)$ and $\mathcal{D}=(\mathcal{D}, c^{\prime})$ be 
strict left rigid braided monoidal categories, and  
$(F,\phi ,\omega ):\mathcal{C}\longrightarrow \mathcal{D}$ 
be a braided monoidal functor. 
Let $M$ be an object in $\mathcal{C}$, and $k$ be a positive integer, and 
define the isomorphism $\phi ^{(k)} : F(M)^{\otimes k} \longrightarrow F(M^{\otimes k})$ in $\mathcal{D}$ 
by 
$$
\phi ^{(1)}  := {\rm id}_{F(M)},\quad 
\phi ^{(k)} :=\phi _{M, M^{\otimes (k-1)}}\circ ({\rm id}_{F(M)}\otimes \phi ^{(k-1)})\qquad (k\geq 2). 
$$
Then for an endomorphism $f$ on $M^{\otimes n}$ in $\mathcal{C}$, 
the following equations hold. 
\begin{align}
\underline{\widetilde{{\rm Tr}}}_{c^{\prime}}^l \bigl( (\phi ^{(n)})^{-1}\circ F(f)\circ \phi ^{(n)}\bigr) 
& =\omega ^{-1}\circ \bigl( F\bigl( \underline{\widetilde{{\rm Tr}}}_c^l(f)\bigr) \bigr)  \circ \omega , \label{eq4-2} \\ 
\underline{\widetilde{{\rm Tr}}}_{c^{\prime}}^r \bigl( (\phi ^{(n)})^{-1}\circ F(f)\circ \phi ^{(n)}\bigr) 
& =\omega ^{-1}\circ \bigl(F\bigl( \underline{\widetilde{{\rm Tr}}}_c^r(f)\bigr) \bigr)  \circ \omega . \label{eq4-3}
\end{align}
\par 
Therefore, if $\mathcal{C}$, $\mathcal{D}$ and $(F,\phi ,\omega )$ are $\boldsymbol{k}$-linear, then  
$$\underline{\widetilde{{\rm Tr}}}_{c^{\prime}}^l \bigl( (\phi ^{(n)})^{-1}\circ F(f)\circ \phi ^{(n)}\bigr) 
=\underline{\widetilde{{\rm Tr}}}_c^l(f), \quad 
\underline{\widetilde{{\rm Tr}}}_{c^{\prime}}^r \bigl( (\phi ^{(n)})^{-1}\circ F(f)\circ \phi ^{(n)}\bigr) 
= \underline{\widetilde{{\rm Tr}}}_c^r(f)$$
as elements in $\boldsymbol{k}$.  
\end{prop}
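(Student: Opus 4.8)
The plan is to regard the iterated trace as what it is by definition~\eqref{eq4-1}, namely an $n$-fold composite of the single partial trace $\underline{{\rm Tr}}^{l,1}$, and to establish a single-step identity that peels off one tensor factor at a time; equation~\eqref{eq4-2} then follows by iterating this identity $n$ times. The key to a uniform bookkeeping, right down to the last factor, is to extend the family $\{\phi^{(k)}\}$ by declaring $\phi^{(0)} := \omega : \mathbb{I}' \longrightarrow F(\mathbb{I})$. With this convention the defining recursion $\phi^{(k)} = \phi_{M, M^{\otimes(k-1)}} \circ ({\rm id}_{F(M)} \otimes \phi^{(k-1)})$ becomes valid for every $k \geq 1$: when $k = 1$ it reduces to the unit axiom $\phi_{M, \mathbb{I}} \circ ({\rm id}_{F(M)} \otimes \omega) = {\rm id}_{F(M)}$ of the monoidal functor $F$, which is compatible with the prescribed value $\phi^{(1)} = {\rm id}_{F(M)}$.

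The heart of the argument is the single-step reduction: for every $f \in {\rm End}(M^{\otimes k})$ with $k \geq 1$,
\begin{equation*}
  \underline{{\rm Tr}}_{c'}^{l,1}\bigl( (\phi^{(k)})^{-1} \circ F(f) \circ \phi^{(k)} \bigr)
  = (\phi^{(k-1)})^{-1} \circ F\bigl( \underline{{\rm Tr}}_c^{l,1}(f) \bigr) \circ \phi^{(k-1)} .
\end{equation*}
To prove this, I substitute the recursion for $\phi^{(k)}$ into the left-hand side, which rewrites $(\phi^{(k)})^{-1} \circ F(f) \circ \phi^{(k)}$ as $({\rm id}_{F(M)} \otimes (\phi^{(k-1)})^{-1}) \circ h \circ ({\rm id}_{F(M)} \otimes \phi^{(k-1)})$, where $h := \phi_{M, M^{\otimes(k-1)}}^{-1} \circ F(f) \circ \phi_{M, M^{\otimes(k-1)}}$ is an endomorphism of $F(M) \otimes F(M^{\otimes(k-1)})$. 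The functoriality~\eqref{eq4-2-1} of the left partial trace on $X = F(M)$ then pulls $(\phi^{(k-1)})^{-1}$ and $\phi^{(k-1)}$ outside the trace, while Lemma~\ref{4-3}(1), applied with $X = M$ and $Y = Z = M^{\otimes(k-1)}$, identifies $\underline{{\rm Tr}}_{c'}^{l,1}(h)$ with $F(\underline{{\rm Tr}}_c^{l,1}(f))$. Composing these two observations gives precisely the displayed identity.

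With the single-step reduction established, equation~\eqref{eq4-2} is obtained by applying it $n$ times: each application lowers the superscript by one and replaces $f$ by $\underline{{\rm Tr}}_c^{l,1}(f)$, so after $n$ iterations the conjugating isomorphism has descended to $\phi^{(0)} = \omega$ and $f$ has been replaced by $\underline{\widetilde{{\rm Tr}}}_c^l(f)$, yielding exactly $\omega^{-1} \circ F(\underline{\widetilde{{\rm Tr}}}_c^l(f)) \circ \omega$. The right-handed identity~\eqref{eq4-3} is proved verbatim, using instead the right partial trace, its functoriality~\eqref{eq4-2-2}, and Lemma~\ref{4-3}(2). For the $\boldsymbol{k}$-linear statement, both sides are scalar endomorphisms of the unit object, i.e.\ elements of ${\rm End}(\mathbb{I}) = \boldsymbol{k}$; since a $\boldsymbol{k}$-linear functor sends $\lambda\,{\rm id}_{\mathbb{I}}$ to $\lambda\,{\rm id}_{F(\mathbb{I})}$ and conjugation by the isomorphism $\omega$ fixes such scalars, the right-hand side of~\eqref{eq4-2} equals $\underline{\widetilde{{\rm Tr}}}_c^l(f)$ as an element of $\boldsymbol{k}$, and similarly for~\eqref{eq4-3}. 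I expect the only delicate point to be the bottom of the iteration, where one must verify that the unit constraint $\omega$ emerges exactly in the role of $\phi^{(0)}$; once $\phi^{(0)} := \omega$ is adopted and the unit axiom of $F$ is invoked, the induction runs mechanically.
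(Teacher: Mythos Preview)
Your proof is correct and follows essentially the same route as the paper: both argue by peeling off one tensor factor at a time, combining the functoriality~\eqref{eq4-2-1} of the partial trace with Lemma~\ref{4-3}(1) to obtain the single-step reduction, and then iterate. The only cosmetic difference is that you absorb the final step into the induction by declaring $\phi^{(0)}:=\omega$ and invoking the unit axiom of $F$, whereas the paper stops the iteration at $\phi^{(1)}={\rm id}_{F(M)}$ and treats the last application of $\underline{{\rm Tr}}_{c'}^{l,1}$ separately (the $\omega$'s appear there implicitly through the same unit axiom). One small remark: for the right-handed version the paper records the alternative recursion $\phi^{(k)}=\phi_{M^{\otimes(k-1)},M}\circ(\phi^{(k-1)}\otimes{\rm id}_{F(M)})$, which is what makes the argument go through verbatim with~\eqref{eq4-2-2} and Lemma~\ref{4-3}(2); you should mention this rather than leave it implicit in ``proved verbatim.''
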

\begin{proof}
We set $g:=(\phi ^{(n)})^{-1}\circ F(f)\circ \phi ^{(n)}$. 
By (\ref{eq4-2-1}) and Lemma~\ref{4-3}(1)
we have 
\begin{align*}
\underline{{\rm Tr}}_{c^{\prime}}^{l, 1}(g) 
&=(\phi ^{(n-1)})^{-1}\circ \Bigl( \underline{{\rm Tr}}_{c^{\prime}}^{l, 1}\bigl( \phi _{M, M^{\otimes (n-1)}}^{-1}\circ F(f)\circ \phi _{M, M^{\otimes (n-1)}} \bigr) \Bigr) \circ \phi ^{(n-1)} \\ 
&=(\phi ^{(n-1)})^{-1}\circ F\bigl( \underline{{\rm Tr}}_c^{l, 1}(f)\bigr) \circ \phi ^{(n-1)}. 
\end{align*}

The same arguments for  
$f_1:=\underline{{\rm Tr}}_c^{l, 1}(f)$ and $g_1:=\underline{{\rm Tr}}_{c^{\prime}}^{l, 1}(g)$ provide the equation 
$$
\underline{{\rm Tr}}_{c^{\prime}}^{l, 1}\bigl( \underline{{\rm Tr}}_{c^{\prime}}^{l, 1}(g)\bigr)  =(\phi ^{(n-2)})^{-1}\circ F\bigl( \underline{{\rm Tr}}_c^{l, 1}\bigl( \underline{{\rm Tr}}_c^{l, 1}(f)\bigr) \bigr) \circ \phi ^{(n-2)}. 
$$

By repeating the same arguments, the equation
\begin{equation}\label{eq4-4}
\overbrace{(\underline{{\rm Tr}}_{c^{\prime}}^{l, 1}\circ \cdots \circ \underline{{\rm Tr}}_{c^{\prime}}^{l, 1})}^{n-1}(g)=F\bigl( \overbrace{(\underline{{\rm Tr}}_c^{l, 1}\circ \cdots \circ \underline{{\rm Tr}}_c^{l, 1})}^{n-1}( f)\bigr) 
\end{equation}
is obtained. 
Setting $f_{n-1}:= \overbrace{(\underline{{\rm Tr}}_c^{l, 1}\circ \cdots \circ \underline{{\rm Tr}}_c^{l, 1})}^{n-1}( f)$ and applying $\underline{{\rm Tr}}_{c^{\prime}}^{l, 1}$ to the equation (\ref{eq4-4}), we have the desired equation 
$$\underline{\widetilde{{\rm Tr}}}_{c^{\prime}}^l(g) 
=\underline{{\rm Tr}}_{c^{\prime}}^{l, 1}\Bigl( F\bigl( f_{n-1}\bigr) \Bigr) =\omega ^{-1}\circ F\bigl(\underline{{\rm Tr}}_c^{l, 1} (f_{n-1})\bigr)\circ \omega =\omega ^{-1}\circ F\bigl(\underline{\widetilde{{\rm Tr}}}_c^l(f) \bigr)\circ \omega . 
$$

As in a similar way, the equation (\ref{eq4-3}) can be shown by using $\phi ^{(k)}=\phi _{M^{\otimes (k-1)}, M}\circ (\phi ^{(k-1)}\otimes {\rm id}_{F(M)})$. 
The last two equations are immediately derived from (\ref{eq4-2}) and (\ref{eq4-3}) by setting $\underline{\widetilde{{\rm Tr}}}_c^l(f)=\lambda {\rm id}_{\boldsymbol{k}}$ and $\underline{\widetilde{{\rm Tr}}}_c^r(f)=\lambda ^{\prime}{\rm id}_{\boldsymbol{k}}$ for some $\lambda , \lambda ^{\prime}\in \boldsymbol{k}$.  
\end{proof}

\par \smallskip 
As the same manner of the proof of the above proposition with help from Lemma~\ref{4-4} we have: 

\par \smallskip 
\begin{prop}\label{4-6}
Let $\mathcal{C}=(\mathcal{C}, c)$  be a $\boldsymbol{k}$-linear strict left rigid braided monoidal category. 
Let $M$ be an object in $\mathcal{C}$, and $k$ be a positive integer, and 
define the isomorphism $j^{(k)} : (M^{\ast})^{\otimes k} \longrightarrow (M^{\otimes k})^{\ast}$ in $\mathcal{C}$ by 
$$
j^{(1)}  := {\rm id}_{M^{\ast}},\quad 
j^{(k)}  :=j_{M^{\otimes (k-1)}, M}\circ ({\rm id}_{M^{\ast}}\otimes j^{(k-1)})\qquad (k\geq 2). 
$$
Then for an endomorphism $f$ on $M^{\otimes n}$ in $\mathcal{C}$, 
the following equation holds as elements in $\boldsymbol{k}$:  
\begin{align}
\underline{\widetilde{\rm{Tr}}}_{c}^r \bigl( (j^{(n)})^{-1}\circ {}^t\kern-0.1em f\circ j^{(n)}\bigr) 
& =\underline{\widetilde{\rm{Tr}}}_{\bar{c}}^l(f). 
\end{align}
\end{prop}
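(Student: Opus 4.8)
The plan is to argue by induction on $n$, following the bookkeeping in the proof of Proposition~\ref{4-5} but replacing each appeal to Lemma~\ref{4-3} by an appeal to Lemma~\ref{4-4}. Write $g := (j^{(n)})^{-1}\circ {}^t\kern-0.1em f\circ j^{(n)}$, an endomorphism of $(M^{\ast})^{\otimes n}$, so that on the left-hand side $\underline{{\rm Tr}}_c^{r,1}=\underline{{\rm Tr}}_c^{r,M^{\ast}}$ traces out one copy of $M^{\ast}$. By the definition (\ref{eq4-1}) of the iterated traces we have $\underline{\widetilde{{\rm Tr}}}_c^r(g)=\underline{\widetilde{{\rm Tr}}}_c^r\bigl(\underline{{\rm Tr}}_c^{r,1}(g)\bigr)$ and $\underline{\widetilde{{\rm Tr}}}_{\bar{c}}^l(f)=\underline{\widetilde{{\rm Tr}}}_{\bar{c}}^l\bigl(\underline{{\rm Tr}}_{\bar{c}}^{l,1}(f)\bigr)$, where the outer traces are now the $(n-1)$-fold ones. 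The idea is to peel the leftmost copy of $M$ off $f$ and the rightmost copy of $M^{\ast}$ off $g$; these two factors correspond to one another under the order-reversing isomorphism $(M^{\otimes n})^{\ast}\cong (M^{\ast})^{\otimes n}$, which is exactly the matching encoded in Lemma~\ref{4-4}.

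For the base case $n=1$ one has $j^{(1)}={\rm id}_{M^{\ast}}$ and $g={}^t\kern-0.1em f$, so the assertion reduces to $\underline{{\rm Tr}}_c^{r}({}^t\kern-0.1em f)=\underline{{\rm Tr}}_{\bar{c}}^{l}(f)$ for $f\in {\rm End}(M)$. This is Lemma~\ref{4-4} specialized to $Y=Z=\mathbb{I}$: strictness gives $\mathbb{I}^{\ast}=\mathbb{I}$ and $j_{M,\mathbb{I}}={\rm id}_{M^{\ast}}$, and the left transpose of a scalar endomorphism of $\mathbb{I}$ is itself.

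For the inductive step I would first re-express $j^{(n)}$ by splitting off the rightmost, rather than the leftmost, dual factor, using the coherence identity
\[
  j^{(n)} = j_{M,\,M^{\otimes (n-1)}}\circ \bigl(j^{(n-1)}\otimes {\rm id}_{M^{\ast}}\bigr).
\]
Setting $\tilde{g} := j_{M,\,M^{\otimes (n-1)}}^{-1}\circ {}^t\kern-0.1em f\circ j_{M,\,M^{\otimes (n-1)}}$, this gives $g=\bigl((j^{(n-1)})^{-1}\otimes {\rm id}_{M^{\ast}}\bigr)\circ \tilde{g}\circ \bigl(j^{(n-1)}\otimes {\rm id}_{M^{\ast}}\bigr)$. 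Applying the right-trace functoriality (\ref{eq4-2-2}) with $X=M^{\ast}$ yields $\underline{{\rm Tr}}_c^{r,1}(g)=(j^{(n-1)})^{-1}\circ \underline{{\rm Tr}}_c^{r,M^{\ast}}(\tilde{g})\circ j^{(n-1)}$, and Lemma~\ref{4-4} with $X=M$, $Y=Z=M^{\otimes (n-1)}$ identifies $\underline{{\rm Tr}}_c^{r,M^{\ast}}(\tilde{g})={}^t\kern-0.1em h$, where $h:=\underline{{\rm Tr}}_{\bar{c}}^{l,1}(f)\in {\rm End}(M^{\otimes (n-1)})$. Thus $\underline{{\rm Tr}}_c^{r,1}(g)=(j^{(n-1)})^{-1}\circ {}^t\kern-0.1em h\circ j^{(n-1)}$, which is precisely an expression to which the induction hypothesis applies. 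Taking the $(n-1)$-fold right trace of both sides and invoking the induction hypothesis for $h$ then gives $\underline{\widetilde{{\rm Tr}}}_c^r(g)=\underline{\widetilde{{\rm Tr}}}_{\bar{c}}^l(h)=\underline{\widetilde{{\rm Tr}}}_{\bar{c}}^l(f)$, completing the induction.

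I expect the main obstacle to be the coherence identity displayed above. The statement fixes the bracketing of $j^{(n)}$ by peeling the leftmost dual factor, whereas Lemma~\ref{4-4} naturally traces the rightmost dual factor (corresponding to the leftmost tensorand of $M^{\otimes n}$), so the two bracketings must be reconciled. This rests on the associativity coherence $j_{X\otimes Y,\,Z}\circ ({\rm id}_{Z^{\ast}}\otimes j_{X,Y})=j_{X,\,Y\otimes Z}\circ (j_{Y,Z}\otimes {\rm id}_{X^{\ast}})$ for the isomorphisms $j$ of \cite{FY}, which follows from the uniqueness of left duals and is most cleanly verified by induction from the characterizing relation $e_{X\otimes Y}\circ (j_{X,Y}\otimes {\rm id}_{X\otimes Y})=e_Y\circ ({\rm id}_{Y^{\ast}}\otimes e_X\otimes {\rm id}_Y)$. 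Once this re-association is in hand, the remainder is a routine transcription of the argument of Proposition~\ref{4-5}.
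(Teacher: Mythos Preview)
Your proposal is correct and is exactly the approach the paper indicates: the paper's own proof is a single sentence saying to proceed ``as the same manner of the proof of'' Proposition~\ref{4-5} ``with help from Lemma~\ref{4-4}'', and you have faithfully carried this out, replacing each use of Lemma~\ref{4-3} and the naturality~(\ref{eq4-2-1}) by Lemma~\ref{4-4} and~(\ref{eq4-2-2}). The re-bracketing coherence $j^{(n)} = j_{M,\,M^{\otimes (n-1)}}\circ (j^{(n-1)}\otimes {\rm id}_{M^{\ast}})$ that you flag is indeed needed and is left implicit in the paper (just as the analogous identity $\phi^{(k)}=\phi_{M^{\otimes(k-1)},M}\circ(\phi^{(k-1)}\otimes{\rm id}_{F(M)})$ is used without proof at the end of the argument for Proposition~\ref{4-5}); your justification via the associativity coherence of the $j_{X,Y}$ is the right one.
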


\par \smallskip 
Given a quasitriangular Hopf algebra $(A, R)$, a braiding  $c^R=\{ \ c^R_{X,Y} : X\otimes Y \longrightarrow Y\otimes X\ \}_{X,Y\in  {}_A\Mod}$ is defined by 
$$(c^R)_{X,Y}(x\otimes y)=T_{X,Y}\bigl( R\cdot (x\otimes y)\bigr)  $$
for all $x\in X$ and $y\in Y$, where $R\cdot (x\otimes y)$ is the diagonal action of $R$ to $X\otimes Y$. 
Denoted by ${}_{(A,R)}\Mod$ is the braided monoidal category $({}_A\Mod, c^R)$.  
We use the  notation $\underline{{\rm Tr}}_{R}$ instead of $\underline{{\rm Tr}}_{c^R}$. 

\par \smallskip 
\begin{exam}\label{4-7}
Let $(A, R)$ a quasitriangular Hopf algebra over $\boldsymbol{k}$, and $u$ be the Drinfel'd element of it. 
It is well-known that the Drinfel'd element $u$ is invertible, and when $R$ is written as $R=\sum_j\alpha _j\otimes \beta _j$,  the inverse is given by $R^{-1}=(S\otimes {\rm id})(R)=\sum_jS(\alpha _j)\otimes \beta _j$, and 
$u=\sum_j S(\beta _j)\alpha _j$ and $u^{-1}=\sum_j\beta _jS^2(\alpha _j)$ \cite{Dri2, Ra3}.  
\par 
Let $M$ be a finite-dimensional left $A$-module. 
For any $a\in A$ the action of $a$ on $M$ is denoted by $\underline{a}_M$. 
Then for any  $A$-module endomorphism $f$ on $M^{\otimes n}$ the following formulas hold: 
\begin{align}
\underline{\widetilde{\rm{Tr}}}_R^l(f)&={\rm Tr}((\underline{u}^{-1}_M\otimes \cdots \otimes \underline{u}^{-1}_M)\circ f), \label{eq4-5}\\ 
\underline{\widetilde{\rm{Tr}}}_R^r(f)&={\rm Tr}((\underline{u}_M\otimes \cdots \otimes \underline{u}_M)\circ f),  \label{eq4-6}
\end{align}
where $\rm{Tr}$ in the right-hand side stands for the usual trace on linear transformations. 
\end{exam} 
\begin{proof}
Here, we only prove the first equation since the second equation can be proved by the same argument. 
The equation (\ref{eq4-5}) can be shown by induction on $n$ as follows. 
\par 
Let $\{ e_i\}_{i=1}^d$ be a basis for $M$.  For any $a\in A$, 
$a\cdot e_i$ is expressed as 
$a\cdot e_i=\sum_{i^{\prime}=1}^d M_{i^{\prime}, i}(a) \kern0.2em e_{i^{\prime}}$ 
for some $M_{i^{\prime}, i}(a)\in \boldsymbol{k}$. 
Then $\underline{\widetilde{{\rm Tr}}}_R^l(f)=\underline{{\rm Tr}}_R^{l,1}(f)
=\sum M_{i^{\prime}, i}(\beta _j)M_{k, i^{\prime}}(f)M_{i, k}(S^2(\alpha _j)) 
=\sum M_{i^{\prime}, i^{\prime}}(\underline{u}_M^{-1}\circ f) 
={\rm Tr}(\underline{u}_M^{-1}\circ f)$. 
\par 
Next, assume that the equation $\underline{\widetilde{{\rm Tr}}}_R^l(g)={\rm Tr}((\underline{u}^{-1}_M\otimes \cdots \otimes \underline{u}^{-1}_M)\circ g)$ holds for any $A$-module endomorphism $g$ on $M^{\otimes (n-1)}$. 
Let $f$ be an $A$-module endomorphism on $M^{\otimes n}$. 
Then $g:=\underline{{\rm Tr}}_R^{l,1}(f)$ is  an $A$-module endomorphism on $M^{\otimes (n-1)}$. 
Applying the induction hypothesis, we have 
$\underline{\widetilde{{\rm Tr}}}_R^l(f) 
=\underline{\widetilde{{\rm Tr}}}_R^l(g) 
={\rm Tr}(\underbrace{(\underline{u}^{-1}_M\otimes \cdots \otimes \underline{u}^{-1}_M)}_{(n-1)}\circ g) 
={\rm Tr}(\underbrace{(\underline{u}^{-1}_M\otimes \cdots \otimes \underline{u}^{-1}_M)}_{n}\circ f)$. 
\end{proof} 

\par \smallskip 
\subsection{Construction of monoidal Morita invariants} 
In this subsection we introduce a family of monoidal Morita invariants of a finite-dimensional Hopf algebra by using partial braided traces. 
\par 
Let $\mathcal{C}=(\mathcal{C}, c)$ be a strict left rigid braided monoidal category, and $M$ be an object in $\mathcal{C}$.  
Then there is a representation $\rho _M: B_n \longrightarrow {\rm GL}(M^{\otimes n})$ of the $n$-strand braid group $B_n$ 
such that each positive crossing and negative crossing correspond to $c_{M,M}$ and $c_{M,M}^{-1}$, respectively \cite{RT}. 
For each $\boldsymbol{b}\in B_n$ we set 
$$\underline{\boldsymbol{b}\, \text{-}\dim}_c^l(M)  :=
\underline{\widetilde{{\rm Tr}}}_c^l\bigl( \rho _M(\boldsymbol{b})\bigr),\quad  
\underline{\boldsymbol{b}\, \text{-}\dim}_c^r(M)  :=
\underline{\widetilde{{\rm Tr}}}_c^r\bigl( \rho _M(\boldsymbol{b})\bigr) .$$
By Example \ref{4-7},
\begin{equation}
  \label{eq:q-dim-as-braided-dim}
  \underline{\boldsymbol{1}\, \text{-}\dim}_c^r(M) =
  \text{(the quantum dimension of $M$)}
\end{equation}
in the sense of \cite{MajidBook}, where $\boldsymbol{1}$ is the identity element of $B_1$.

\par \smallskip 
\begin{lem}\label{4-8}
Let $M$ and $N$ be two objects in $\mathcal{C}$. 
\par 
(1) If $M$ and $N$ are isomorphic, then 
$\underline{\boldsymbol{b}\, \text{-}\dim}_c^l(M)=\underline{\boldsymbol{b}\, \text{-}\dim}_c^l(N)$, \ 
$\underline{\boldsymbol{b}\, \text{-}\dim}_c^r(M)=\underline{\boldsymbol{b}\, \text{-}\dim}_c^r(N)$. 
\par 
(2) 
$\underline{\boldsymbol{b}\, \text{-}\dim}_c^r(M^{\ast})=\underline{\boldsymbol{b}\, \text{-}\dim}_{\bar{c}}^l(M)$. 
\end{lem}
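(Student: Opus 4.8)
The plan is to handle the two parts separately, reducing each to the partial-trace machinery of Subsection~4.1. Throughout, write $\rho_M$ for the braid action formed from the braiding currently in force, so that $\underline{\boldsymbol{b}\,\text{-}\dim}_c^l(M)=\underline{\widetilde{{\rm Tr}}}_c^l(\rho_M(\boldsymbol{b}))$, and likewise with $c$ replaced by $\bar c$.

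For Part (1), fix an isomorphism $\alpha\colon M\to N$ in $\mathcal{C}$. Naturality of the braiding, applied to $\alpha$ in each slot, gives $c_{N,N}\circ(\alpha\otimes\alpha)=(\alpha\otimes\alpha)\circ c_{M,M}$ and the same with $c$ replaced by $c^{-1}$; since $\rho_M(\boldsymbol{b})$ and $\rho_N(\boldsymbol{b})$ are words in these braidings, I obtain $\rho_N(\boldsymbol{b})=\alpha^{\otimes n}\circ\rho_M(\boldsymbol{b})\circ(\alpha^{\otimes n})^{-1}$ for every $\boldsymbol{b}\in B_n$. It therefore suffices to show that the modified traces are unchanged under conjugation by $\alpha^{\otimes n}$. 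To this end I choose a left dual $(M^{\ast},e_M,n_M)$ for $M$ and transport it along $\alpha$ to the left dual $\bigl(M^{\ast},\,e_M\circ({\rm id}_{M^{\ast}}\otimes\alpha^{-1}),\,(\alpha\otimes{\rm id}_{M^{\ast}})\circ n_M\bigr)$ of $N$. Because the partial braided trace does not depend on the chosen dual, I may compute $\underline{{\rm Tr}}_c^{l,N}$ and $\underline{{\rm Tr}}_c^{r,N}$ with this dual; a one-strand computation then shows, using naturality of $c$ to slide $\alpha$ through the braiding and cancel it against $\alpha^{-1}$, that a single partial trace intertwines the conjugation. Iterating via (\ref{eq4-2-1}) and (\ref{eq4-2-2}) peels off the strands one at a time and yields $\underline{\widetilde{{\rm Tr}}}_c^{l}\bigl(\alpha^{\otimes n}\circ f\circ(\alpha^{\otimes n})^{-1}\bigr)=\underline{\widetilde{{\rm Tr}}}_c^{l}(f)$, and similarly on the right; taking $f=\rho_M(\boldsymbol{b})$ gives Part (1).

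For Part (2) the main tool is Proposition~\ref{4-6}. Applying it to $f=\rho_M(\boldsymbol{b})\in{\rm End}(M^{\otimes n})$, where $\rho_M$ is formed from the braiding $\bar c$, the right-hand side is by definition $\underline{\widetilde{{\rm Tr}}}_{\bar c}^{l}(\rho_M(\boldsymbol{b}))=\underline{\boldsymbol{b}\,\text{-}\dim}_{\bar c}^l(M)$. Thus it remains to identify the left-hand side $\underline{\widetilde{{\rm Tr}}}_c^{r}\bigl((j^{(n)})^{-1}\circ{}^t\kern-0.1em\rho_M(\boldsymbol{b})\circ j^{(n)}\bigr)$ with $\underline{\widetilde{{\rm Tr}}}_c^{r}(\rho_{M^{\ast}}(\boldsymbol{b}))=\underline{\boldsymbol{b}\,\text{-}\dim}_c^r(M^{\ast})$, where now $\rho_{M^{\ast}}$ is formed from $c$. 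The key computational input is the generator-level identity governing the braiding on left duals, namely $(j_{M,M})^{-1}\circ{}^t\kern-0.1em(\bar c_{M,M})\circ j_{M,M}=c_{M^{\ast},M^{\ast}}$, which I would establish by a short graphical calculus from the defining property of $j_{M,M}$ recorded before Lemma~\ref{4-4}.

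The main obstacle is the bookkeeping that promotes this generator identity to the statement for an arbitrary braid. The assignment $g\mapsto(j^{(n)})^{-1}\circ{}^t\kern-0.1em g\circ j^{(n)}$ is an anti-homomorphism of endomorphism monoids, since transpose is contravariant, while $j^{(n)}$ reverses the order of the tensor factors and hence sends the generator $\sigma_i$ to $\sigma_{n-i}$. These two reversals combine to send $\rho_M(\boldsymbol{b})$ (with respect to $\bar c$) to $\rho_{M^{\ast}}(\boldsymbol{b}')$ (with respect to $c$), where $\boldsymbol{b}'$ is the image of $\boldsymbol{b}$ under the anti-automorphism of $B_n$ that reverses a braid word and relabels $\sigma_i\mapsto\sigma_{n-i}$; geometrically, $\boldsymbol{b}'$ is $\boldsymbol{b}$ rotated by $180^{\circ}$ in the plane, so its closure is isotopic to that of $\boldsymbol{b}$. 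The one genuinely delicate point is therefore to verify that the closed trace $\underline{\widetilde{{\rm Tr}}}_c^{r}$ is invariant under this symmetry. I would do this through the graphical calculus of Subsection~4.1, or equivalently by combining cyclicity of the braided trace with conjugation by the half-twist of $B_n$, which realizes $\sigma_i\mapsto\sigma_{n-i}$; everything else is a direct application of Propositions~\ref{4-5} and~\ref{4-6}.
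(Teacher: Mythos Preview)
Your treatment of Part~(1) is the paper's argument: naturality of $c$ gives $\rho_N(\boldsymbol{b})=\alpha^{\otimes n}\circ\rho_M(\boldsymbol{b})\circ(\alpha^{\otimes n})^{-1}$, and conjugation-invariance of the modified trace (which the paper extracts from Proposition~\ref{4-5}, and you reprove by hand with a transported dual) finishes.

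For Part~(2), you and the paper use the same two tools, Proposition~\ref{4-6} together with a generator-level compatibility between $j$ and the braiding, but the details differ. The paper's generator identity is
\[
j_{M,N}\circ c_{M^{\ast},N^{\ast}}={}^t(c_{M,N})\circ j_{N,M},
\]
with $c$, not $\bar c$; your version $(j_{M,M})^{-1}\circ{}^t(\bar c_{M,M})\circ j_{M,M}=c_{M^{\ast},M^{\ast}}$ is incorrect (combined with the paper's identity it would force $c_{M^{\ast},M^{\ast}}=c_{M^{\ast},M^{\ast}}^{-1}$). From its identity the paper asserts directly that $j^{(n)}\circ\rho_{M^{\ast}}(\boldsymbol{b})={}^t(\rho_M(\boldsymbol{b}))\circ j^{(n)}$ and then applies Proposition~\ref{4-6} in one stroke, bypassing your braid $\boldsymbol{b}'$ and its rotation-symmetry argument entirely. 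Your observation that transpose is an anti-homomorphism and that $j^{(n)}$ reverses the tensor factors is accurate, and the paper is indeed terse at this step; but your proposed cure, appealing to ``cyclicity of the braided trace'' together with conjugation by the half-twist, is itself a gap. In a merely braided (non-ribbon) category the iterated partial trace $\underline{\widetilde{{\rm Tr}}}_c^{\,r}$ on $M^{\otimes n}$ is not cyclic in general, so neither conjugation by $\rho_{M^{\ast}}(\Delta)$ nor invariance under the word-reversal of $\boldsymbol{b}$ is available for free; that step would require its own graphical argument rather than a one-line reduction.
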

\begin{proof}
(1) Let $\varphi : M\longrightarrow N$ be an isomorphism.  
The map $\varphi ^{\otimes n}: M^{\otimes n} \longrightarrow N^{\otimes n}$ is also an isomorphism.  
Let $\rho_M: B_n\longrightarrow {\rm GL}(M^{\otimes n})$ and $\rho _N: B_n\longrightarrow {\rm GL}(N^{\otimes n})$ be the representations induced from the braiding $c$. 
Since 
$c_{N,N}\circ (\varphi \otimes \varphi )= (\varphi \otimes \varphi )\circ c_{M,M}$ from naturality of $c$, 
the endomorphisms $f:=\rho _M(\boldsymbol{b})$ and $g:=\rho _N(\boldsymbol{b})$ satisfy 
$g\circ \varphi ^{\otimes n}=\varphi ^{\otimes n}\circ f$. 
Thus, $g$ is expressed as $g=(\varphi ^{\otimes n})\circ f\circ (\varphi ^{\otimes n})^{-1}$, and 
it follows from Proposition~\ref{4-5} that 
$\underline{\boldsymbol{b}\, \text{-}\dim}_c^l(N)
=\underline{\widetilde{{\rm Tr}}}_c^l(g) 
=\underline{\widetilde{{\rm Tr}}}_c^l(f) 
=\underline{\boldsymbol{b}\, \text{-}\dim}_c^l(M)$. 
The equation $\underline{\boldsymbol{b}\, \text{-}\dim}_c^r(M)=\underline{\boldsymbol{b}\, \text{-}\dim}_c^r(N)$ is also shown by the same argument. 
\par 
(2) By the definition of the natural isomorphism $j_{M,N}: N^{\ast}\otimes M^{\ast}\longrightarrow (M\otimes N)^{\ast}$,  it is easy to see that 
$j_{M,N}\circ c_{M^{\ast}, N^{\ast}}={}^t\kern-0.1em (c_{M,N})\circ j_{N,M}$. 
It follows that the representation $\rho_{M^{\ast}} : B_n\longrightarrow {\rm GL}((M^{\ast})^{\otimes n})$ induced from the braiding $c$ satisfies 
$j^{(n)}\circ \rho_{M^{\ast}}(\boldsymbol{b})={}^t\kern-0.1em \bigl(\rho_M(\boldsymbol{b})\bigr)\circ j^{(n)}$, 
where $j^{(n)}$ is the isomorphism defined in Proposition~\ref{4-6}.   
Thus we have 
$\underline{\boldsymbol{b}\, \text{-}\dim}_c^r(M^{\ast})=
\underline{\widetilde{{\rm Tr}}}_c^r\bigl( (j^{(n)})^{-1}\circ {}^t\kern-0.1em \bigl(\rho_M(\boldsymbol{b})\bigr)\circ j^{(n)}\bigr) 
=\underline{\widetilde{{\rm Tr}}}_{\bar{c}}^l\bigl(\rho_M(\boldsymbol{b})\bigr) =\underline{\boldsymbol{b}\, \text{-}\dim}_{\bar{c}}^l(M)$.  
\end{proof} 

\par 
Let $(A, R)$ be a quasitriangular Hopf algebra over $\boldsymbol{k}$, and $M$ be a finite-dimensional left $A$-module. 
For each $\boldsymbol{b}\in B_n$ we set 
$$\underline{\boldsymbol{b}\, \text{-}\dim}_R^l(M)  :=
\underline{\boldsymbol{b}\, \text{-}\dim}_{c^R}^l(M),\quad  
\underline{\boldsymbol{b}\, \text{-}\dim}_R^r(M)  :=
\underline{\boldsymbol{b}\, \text{-}\dim}_{c^R}^l(M).$$
In particular, in the case where $(A,R)=(D(H), \mathcal{R})$ for some finite-dimensional Hopf algebra $H$ over $\boldsymbol{k}$, 
we denote 
$$
\underline{\boldsymbol{b}\, \text{-}\dim}^l(M)  :=
\underline{\boldsymbol{b}\, \text{-}\dim}_{\mathcal{R}}^l(M) ,\quad 
\underline{\boldsymbol{b}\, \text{-}\dim}^r(M)  :=
\underline{\boldsymbol{b}\, \text{-}\dim}_{\mathcal{R}}^r(M).  
$$

Now we will show that $\underline{\boldsymbol{b}\, \text{-}\dim}^l({}_{D(H)}H)$ and $\underline{\boldsymbol{b}\, \text{-}\dim}^r({}_{D(H)}H)$ are preserved under $\boldsymbol{k}$-linear monoidal equivalences of  the module categories. 

\par \smallskip 
\begin{thm}\label{4-9}
Let $A$ and $B$ be finite-dimensional Hopf algebras over $\boldsymbol{k}$. 
If ${}_A\Mod$ and ${}_B\Mod$ are equivalent as $\boldsymbol{k}$-linear monoidal  categories, 
then  $\underline{\boldsymbol{b}\, \text{-}\dim}^r({}_{D(A)}A)=\underline{\boldsymbol{b}\, \text{-}\dim}^r({}_{D(B)}B)$ for all 
$\boldsymbol{b}\in B_n$.  
The same statement holds for $\underline{\boldsymbol{b}\, \text{-}\dim}^l$. 
\end{thm}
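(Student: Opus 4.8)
The plan is to deduce the statement by assembling three results established above: the induced braided monoidal equivalence $\widetilde{F}$ of Theorem~\ref{thm:Sch-mod-invariance}, the invariance of the Schr\"odinger module under $\widetilde{F}$ (Corollary~\ref{3-5}), and the preservation of braided dimensions under $\boldsymbol{k}$-linear braided monoidal functors (Proposition~\ref{4-5}) and under isomorphisms (Lemma~\ref{4-8}(1)). Write $\mathcal{R}_A$ and $\mathcal{R}_B$ for the canonical R-matrices of $D(A)$ and $D(B)$, so that $\underline{\boldsymbol{b}\, \text{-}\dim}^r$ is computed with respect to the braidings $c^{\mathcal{R}_A}$ and $c^{\mathcal{R}_B}$. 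First I would apply Theorem~\ref{thm:Sch-mod-invariance} to the given $\boldsymbol{k}$-linear monoidal equivalence $F: {}_A\Mod \to {}_B\Mod$ to obtain a $\boldsymbol{k}$-linear braided monoidal equivalence $\widetilde{F}: {}_{D(A)}\Mod \to {}_{D(B)}\Mod$. It is important here that, via the isomorphisms $\Psi_A$ and $\Psi_B$ of \eqref{eq:cat-iso-Z-Double}, the braided structure for which $\widetilde{F}$ is braided is exactly the one given by the R-matrix braidings $c^{\mathcal{R}_A}$ and $c^{\mathcal{R}_B}$.

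The key step is to relate the braid group representation $\rho_M(\boldsymbol{b})$ on $M^{\otimes n}$, where $M = {}_{D(A)}A$, to the representation $\rho_{\widetilde{F}(M)}(\boldsymbol{b})$ on $\widetilde{F}(M)^{\otimes n}$. Denoting by $(\widetilde{F}, \phi, \omega)$ the monoidal structure of $\widetilde{F}$ and by $\phi^{(n)}$ the coherence isomorphism of Proposition~\ref{4-5}, I would prove, by induction on the length of a braid word, the intertwining relation
\begin{equation*}
  \rho_{\widetilde{F}(M)}(\boldsymbol{b}) = (\phi^{(n)})^{-1} \circ \widetilde{F}(\rho_M(\boldsymbol{b})) \circ \phi^{(n)}.
\end{equation*}
The base case amounts to the compatibility of $\widetilde{F}$ with braidings, $\phi_{M,M} \circ c^{\mathcal{R}_B}_{\widetilde{F}(M), \widetilde{F}(M)} = \widetilde{F}(c^{\mathcal{R}_A}_{M,M}) \circ \phi_{M,M}$, applied at the two strands on which a generator $\sigma_i$ of $B_n$ acts, while the inductive step is routine bookkeeping with the coherence isomorphisms. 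This intertwining is the one point of the argument that genuinely requires care; it is standard for braided monoidal functors but must be checked against the precise conventions used to define $\phi^{(n)}$.

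Granting this relation, the conclusion follows by direct assembly. Taking $f = \rho_M(\boldsymbol{b})$ in Proposition~\ref{4-5} and substituting the intertwining relation gives
\begin{equation*}
  \underline{\boldsymbol{b}\, \text{-}\dim}^r(\widetilde{F}(M))
  = \underline{\widetilde{{\rm Tr}}}_{c^{\mathcal{R}_B}}^r\bigl( (\phi^{(n)})^{-1} \circ \widetilde{F}(f) \circ \phi^{(n)} \bigr)
  = \underline{\widetilde{{\rm Tr}}}_{c^{\mathcal{R}_A}}^r(f)
  = \underline{\boldsymbol{b}\, \text{-}\dim}^r(M).
\end{equation*}
On the other hand, Corollary~\ref{3-5} gives $\widetilde{F}({}_{D(A)}A) \cong {}_{D(B)}B$ in ${}_{D(B)}\Mod$, so Lemma~\ref{4-8}(1) yields $\underline{\boldsymbol{b}\, \text{-}\dim}^r(\widetilde{F}({}_{D(A)}A)) = \underline{\boldsymbol{b}\, \text{-}\dim}^r({}_{D(B)}B)$. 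Chaining these equalities with $M = {}_{D(A)}A$ proves $\underline{\boldsymbol{b}\, \text{-}\dim}^r({}_{D(A)}A) = \underline{\boldsymbol{b}\, \text{-}\dim}^r({}_{D(B)}B)$. The statement for $\underline{\boldsymbol{b}\, \text{-}\dim}^l$ is obtained verbatim, using equation \eqref{eq4-2} of Proposition~\ref{4-5} in place of \eqref{eq4-3}.
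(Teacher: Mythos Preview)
Your proposal is correct and follows essentially the same route as the paper's proof: establish the intertwining relation $\phi^{(n)}\circ\rho_{\widetilde{F}(M)}(\boldsymbol{b})=\widetilde{F}(\rho_M(\boldsymbol{b}))\circ\phi^{(n)}$ from the braided-functor axiom, apply Proposition~\ref{4-5} to transport the braided trace, and then invoke Corollary~\ref{3-5} together with Lemma~\ref{4-8}(1). The only difference is cosmetic: the paper dispatches the intertwining relation in one sentence (noting that $\rho_M(\boldsymbol{b})$ is built from $c_{M,M}^{\pm 1}$ and identities) rather than framing it as an induction on word length.
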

\begin{proof}
Let $(F, \phi , \omega ) : ({}_A\Mod, c) \longrightarrow ({}_B\Mod, c^{\prime})$ be a $\boldsymbol{k}$-linear braided monoidal functor. 
Then $\underline{\boldsymbol{b}\, \text{-}\dim}^r_{c^{\prime}} F(M)=\underline{\boldsymbol{b}\, \text{-}\dim}^r_{c} M$ for all finite-dimensional left $A$-module $M$ and $\boldsymbol{b}\in B_n$. 
This equation can be shown as follows. 
Since $\phi _{M,M}\circ c^{\prime}_{F(M), F(M)} =F(c_{M,M})\circ \phi _{M,M}$, and 
$f:=\rho _M(\boldsymbol{b}): M^{\otimes n}\longrightarrow M^{\otimes n}$ is expressed by compositions and tensor products of  $c_{M,M}^{\pm 1}$ and ${\rm id}_M$, the isomorphism $\phi ^{(n)}: F(M)^{\otimes n}\longrightarrow F(M^{\otimes n})$ defined in Proposition~\ref{4-4} satisfies 
$\phi ^{(n)}\circ \rho _{F(M)}(\boldsymbol{b})=F(f)\circ \phi ^{(n)}$. 
Therefore, by Proposition~\ref{4-4} we have 
\begin{align*}
\underline{\boldsymbol{b}\, \text{-}\dim}^r_{c^{\prime}} \bigl( F(M)\bigr) 
&=\underline{\widetilde{{\rm Tr}}}_{c^{\prime}}^r \bigl( (\phi ^{(n)})^{-1}\circ F( f)\circ \phi ^{(n)}\bigr) 
=\underline{\widetilde{{\rm Tr}}}_c^r(f ) 
=\underline{\boldsymbol{b}\, \text{-}\dim}^r_{c} (M). 
\end{align*}
\indent 
Suppose that $(F, \phi , \omega ) : {}_A\Mod\longrightarrow {}_B\Mod$
is an equivalence of $\boldsymbol{k}$-linear monoidal categories. 
By Corollary~\ref{3-5},  
$\tilde{F}({}_{D(A)}A)\cong {}_{D(B)}B$ as left $D(B)$-modules. 
It follows from Lemma~\ref{4-8} that $\underline{\boldsymbol{b}\, \text{-}\dim}^r_{\mathcal{R}}({}_{D(A)}A) 
=\underline{\boldsymbol{b}\, \text{-}\dim}^r_{{\mathcal{R}}^{\prime}}\tilde{F}({}_{D(A)}A) 
=\underline{\boldsymbol{b}\, \text{-}\dim}^r_{{\mathcal{R}}^{\prime}}({}_{D(B)}B)$, 
where $\mathcal{R}$ and $\mathcal{R}^{\prime}$ are the canonical universal $R$-matrices of $D(A)$ and $D(B)$, respectively. 
\end{proof}

\par \medskip 
By using Theorem~\ref{3-10} 
the monoidal Morita invariants $\underline{\boldsymbol{b}\, \text{-}\dim }^l$ and $\underline{\boldsymbol{b}\, \text{-}\dim }^r$ of the dual Schr\"{o}dinger modules ${}_{D(A)}A^{\ast{\rm cop}}$ and ${}_{D(A^{\ast})}(A^{\ast})^{\ast{\rm cop}}$ are computable from the monoidal Morita invariants of the Schr\"{o}dinger modules ${}_{D(A^{\ast})}(A^{\ast})$ and ${}_{D(A)}A$, respectively. 

\par \bigskip 
\begin{prop}\label{4-13}
Let $A$ be a finite-dimensional Hopf algebra over $\boldsymbol{k}$, and 
$\mathcal{R}, \mathcal{R}^{\prime}$ be the canonical universal $R$-matrices of $D(A), D(A^{\ast})$, respectively. 
For any $\boldsymbol{b}\in B_n$, the following equations hold. 
\par 
(1) $\underline{\boldsymbol{b}\, \text{-}\dim }^l({}_{D(A^{\ast})}(A^{\ast})^{\ast{\rm cop}})
=  \underline{\boldsymbol{b}\, \text{-}\dim }^l( {}_{D(A)}A),\  
\underline{\boldsymbol{b}\, \text{-}\dim }^r ({}_{D(A^{\ast})}(A^{\ast})^{\ast{\rm cop}})
=  \underline{\boldsymbol{b}\, \text{-}\dim }^r ( {}_{D(A)}A)$.  
\par 
(2) $\underline{\boldsymbol{b}\, \text{-}\dim }^l ({}_{D(A^{\ast})}A^{\ast})
=\underline{\boldsymbol{b}\, \text{-}\dim }^l ({}_{D(A)}A^{\ast{\rm cop}})$,\ 
$\underline{\boldsymbol{b}\, \text{-}\dim }^r ({}_{D(A^{\ast})}A^{\ast})
=\underline{\boldsymbol{b}\, \text{-}\dim }^r ({}_{D(A)}A^{\ast{\rm cop}})$. 
\end{prop}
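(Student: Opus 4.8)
The plan is to read off both identities from the two module isomorphisms of Theorem~\ref{3-10} and to transport the braided dimensions along the braided monoidal isomorphism $F_A : ({}_{D(A^{\ast})}\Mod)^{\mathrm{rev}} \to {}_{D(A)}\Mod$ provided there. Recall that $F_A$ satisfies $F_A({}_{D(A^{\ast})}(A^{\ast})^{\ast\mathrm{cop}}) \cong {}_{D(A)}A$ and $F_A({}_{D(A^{\ast})}A^{\ast}) \cong {}_{D(A)}A^{\ast\mathrm{cop}}$; the first isomorphism feeds part~(1) and the second part~(2). The key general input is that a $\boldsymbol{k}$-linear braided monoidal functor preserves the modified traces~\eqref{eq4-1}, hence the quantities $\underline{\boldsymbol{b}\,\text{-}\dim}^{l}$ and $\underline{\boldsymbol{b}\,\text{-}\dim}^{r}$; this is exactly the computation carried out in the first half of the proof of Theorem~\ref{4-9} (which rests on Proposition~\ref{4-5}), applied now to $F_A$ in place of a functor between module categories.

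Granting this, the first step is to combine the preservation of braided dimension under $F_A$ with Lemma~\ref{4-8}(1): for every $\boldsymbol{b} \in B_n$, the braided dimension of ${}_{D(A)}A$ in $({}_{D(A)}\Mod, c^{\mathcal{R}})$ equals the braided dimension of ${}_{D(A^{\ast})}(A^{\ast})^{\ast\mathrm{cop}}$ computed in the \emph{source} of $F_A$, that is, in the reversed category $({}_{D(A^{\ast})}\Mod)^{\mathrm{rev}}$; symmetrically for the pair ${}_{D(A^{\ast})}A^{\ast}$, ${}_{D(A)}A^{\ast\mathrm{cop}}$ in part~(2). Both halves of the proposition are thereby reduced to a single point about reversal.

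That point is where I expect the real work to lie, and I would isolate it as a reversal lemma. By the convention of Subsection~4.2 the invariants in the statement use the \emph{canonical} matrix $\mathcal{R}^{\prime}$ of $D(A^{\ast})$, whereas $({}_{D(A^{\ast})}\Mod)^{\mathrm{rev}}$, as a braided category, is $D(A^{\ast})$ equipped with the flipped matrix $\mathcal{R}^{\prime}_{21}$ (this is the content of the identifications in~(\ref{eq2-11}) and~\eqref{eq:cat-iso-Z-YD-Xi}). By Example~\ref{4-7}, replacing $\mathcal{R}^{\prime}$ by $\mathcal{R}^{\prime}_{21}$ replaces the Drinfel'd element $u$ by $S^{-1}(u)$, and since $S^{-1}(u)$ acts with the same trace as $u$ acts on the dual module, the braided dimension taken in the reversed category equals the ordinary braided dimension of the \emph{dual} object. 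Hence the proposition comes down to showing that the induced object $I^A(\boldsymbol{k})$ — whose dual is, by Proposition~\ref{2-5}, the dual Schr\"{o}dinger module — carries the same left and right braided dimensions as its dual. I would obtain this from Lemma~\ref{4-8}(2), which converts $\underline{\boldsymbol{b}\,\text{-}\dim}^{r}$ of a dual into $\underline{\boldsymbol{b}\,\text{-}\dim}^{l}_{\bar{c}}$ up to a transpose, thereby reducing the claim to a comparison of $\underline{\boldsymbol{b}\,\text{-}\dim}_{c}$ and $\underline{\boldsymbol{b}\,\text{-}\dim}_{\bar{c}}$ on a single object. With this reversal lemma in hand, parts~(1) and~(2) follow formally; alternatively, part~(2) can be derived from part~(1) by substituting $A^{\ast}$ for $A$, exactly as in the proof of Theorem~\ref{3-10}.
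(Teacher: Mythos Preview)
Your overall strategy coincides with the paper's: both transport the braided dimensions along the braided monoidal equivalence $F_A$ of Theorem~\ref{3-10}, invoke the computation in the proof of Theorem~\ref{4-9} (i.e., Proposition~\ref{4-5}) to see that braided dimensions are preserved by such a functor, and then apply Lemma~\ref{4-8}(1) together with the module isomorphisms $F_A({}_{D(A^{\ast})}(A^{\ast})^{\ast\mathrm{cop}})\cong {}_{D(A)}A$ and $F_A({}_{D(A^{\ast})}A^{\ast})\cong {}_{D(A)}A^{\ast\mathrm{cop}}$. The paper handles part~(2) by citing Corollary~\ref{2-8} directly rather than by substituting $A^{\ast}$ for $A$, but this is a cosmetic difference.

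Where you diverge is in the ``reversal lemma''. You are right that the paper is terse here: it asserts $\underline{\boldsymbol{b}\,\text{-}\dim}^r_{\mathcal{R}}(F_A(M))=\underline{\boldsymbol{b}\,\text{-}\dim}^r_{\mathcal{R}'}(M)$ even though the source of $F_A$ is the \emph{reversed} braided category $({}_{D(A^{\ast})}\Mod,c^{\mathcal{R}'})^{\mathrm{rev}}$, so a priori the transported quantity is the braided dimension computed in the reverse. However, your proposed resolution---passing through $\mathcal{R}'_{21}$, Drinfel'd elements, dual modules, and Lemma~\ref{4-8}(2)---is both more complicated than necessary and does not actually close: you end by ``reducing the claim to a comparison of $\underline{\boldsymbol{b}\,\text{-}\dim}_c$ and $\underline{\boldsymbol{b}\,\text{-}\dim}_{\bar{c}}$ on a single object'', which you do not settle, and which is in any case not the comparison you need. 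The clean way to dispatch the reversal is purely categorical and uses exactly the tool you are already citing: for any braided monoidal category $(\mathcal{C},c)$, the identity-on-objects functor equipped with the monoidal structure $\phi_{X,Y}=c_{X,Y}^{-1}\colon Y\otimes X\to X\otimes Y$ is a braided monoidal equivalence $(\mathcal{C},c)\to(\mathcal{C}^{\mathrm{rev}},\tilde c)$, where $\tilde c_{X,Y}=c_{Y,X}$; by Proposition~\ref{4-5} this already forces the braided dimensions of $M$ in $(\mathcal{C},c)$ and in $(\mathcal{C}^{\mathrm{rev}},\tilde c)$ to agree. Composing this with $F_A$ gives a braided monoidal equivalence $({}_{D(A^{\ast})}\Mod,c^{\mathcal{R}'})\to({}_{D(A)}\Mod,c^{\mathcal{R}})$ which is still $F_A$ on objects, and the paper's equality follows at once. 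Replace your detour with this one-line observation and the proof is complete.
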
 
\begin{proof} 
(1) Let $F_A: ({}_{D(A^{\ast})}\Mod, c^{\mathcal{R}^{\prime}})^{{\rm rev}}\longrightarrow ({}_{D(A)}\Mod, c^{\mathcal{R}})$ be the equivalence of braided monoidal categories defined in Theorem~\ref{3-10}. 
Setting $M:={}_{D(A^{\ast})}(A^{\ast})^{\ast{\rm cop}}$, we have 
$\underline{\boldsymbol{b}\, \text{-}\dim }^r_{\mathcal{R}} (F_A(M))=
\underline{\boldsymbol{b}\, \text{-}\dim }^r_{\mathcal{R}^{\prime}}(M)$
from the proof of Theorem~\ref{4-9}. 
Since $F_A(M)$ and ${}_{D(A)}A$ are isomorphic as left $D(A)$-modules by Proposition~\ref{2-7}, 
it follows from Lemma~\ref{4-8}(1) that 
$\underline{\boldsymbol{b}\, \text{-}\dim }^r_{\mathcal{R}} (F_A(M))
=\underline{\boldsymbol{b}\, \text{-}\dim }^r_{\mathcal{R}} ({}_{D(A)}A)$. 
Thus, 
the second equation is obtained. 
Similarly,  the equation \newline $\underline{\boldsymbol{b}\, \text{-}\dim }^l({}_{D(A^{\ast})}(A^{\ast})^{\ast{\rm cop}})
=  \underline{\boldsymbol{b}\, \text{-}\dim }^l( {}_{D(A)}A)$ can be proved. 
\par 
Part (2) can be proved as in the proof of (1) by using Corollary~\ref{2-8} instead of Proposition~\ref{2-7}. 
\end{proof} 

Let $A$ be a finite-dimensional Hopf algebra. 
In view of Example~\ref{4-7}, it is important to know the action of the Drinfel'd element $u \in D(A)$ on a given $D(A)$-module $M$ to compute the braided dimension of $M$. 
Below we give formulas for the actions of $u$ and $S(u)$ on the Schr\"{o}dinger module ${}_{D(A)}A$.

Recall that a {\em left integral} in $A$ is an element $\Lambda \in A$ such that $a \Lambda = \varepsilon(a) \Lambda$ for all $a \in A$. 
A {\em right integral} in $A$ is a left integral in $A^{\rm op}$. 
It is known that a non-zero left integral $\Lambda \in A$ always exists (under our assumption that $A$ is finite-dimensional), and is unique up to a scalar multiple. 
Hence one can define $\alpha \in A^{\ast}$ by $\Lambda a = \langle \alpha, a \rangle \Lambda$ for $a \in A$.
The map $\alpha$ is in fact an algebra map, and does not depend on the choice of $\Lambda$. 
We call $\alpha$ the {\em distinguished grouplike element} of $A^{\ast}$.
The Hopf algebra $A$ is said to be {\em unimodular} if the distinguished grouplike element $\alpha \in A^{\ast}$ is the counit of $A$, or, equivalently, $\Lambda \in A$ is central. 

\begin{lem}
  \label{lem:Sch-mod-u-Su-action}
  With the above notations, we have
  \begin{equation*}
    u \rightharpoonup a = \sum S^2(a_{(1)}) \big \langle \alpha^{-1}, a_{(2)} \big \rangle
    \text{\quad and \quad}
    S(u) \rightharpoonup a = S^{-2}(a)
  \end{equation*}
  for all $a \in {}_{D(A)}A$, where $\alpha^{-1} = \alpha \circ S$.
\end{lem}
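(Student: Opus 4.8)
The plan is to reduce the statement to two explicit computations inside $D(A)$: first pin down the Drinfel'd element $u$ and its antipode $S_{D(A)}(u)$ as concrete elements of $D(A)$, and then feed them into the Schr\"odinger action~\eqref{eq2-5} (which is the action written $\rightharpoonup$ in the statement). Starting from $\mathcal{R} = \sum_i(\varepsilon_A\bowtie e_i)\otimes(e_i^{\ast}\bowtie 1_A)$ and $u = \sum_i S_{D(A)}(e_i^{\ast}\bowtie 1_A)\cdot(\varepsilon_A\bowtie e_i)$ (\emph{cf.} Example~\ref{4-7}), I would first apply the antipode formula of Subsection~2.1: since all the $A$-legs of $e_i^{\ast}\bowtie 1_A$ are $1_A$, only the counit survives and $S_{D(A)}(e_i^{\ast}\bowtie 1_A) = S_{A^{\ast}}^{-1}(e_i^{\ast})\bowtie 1_A$; the multiplication rule then yields the clean form $u = \sum_i S_{A^{\ast}}^{-1}(e_i^{\ast})\bowtie e_i$. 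A parallel computation, using $S_{D(A)}(\varepsilon_A\bowtie a) = \varepsilon_A\bowtie S(a)$ together with anti-multiplicativity, gives $S_{D(A)}(u) = \sum_i(\varepsilon_A\bowtie S(e_i))\cdot(S_{A^{\ast}}^{-2}(e_i^{\ast})\bowtie 1_A)$. This part is routine bookkeeping with the structure maps.

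Next I would apply~\eqref{eq2-5}. For $S(u)$ the computation is clean: one finds $(S_{A^{\ast}}^{-2}(e_i^{\ast})\bowtie 1_A)\bullet a = \sum\langle e_i^{\ast}, S^{-3}(a_{(1)})\rangle\,a_{(2)}$, and applying $(\varepsilon_A\bowtie S(e_i))\bullet(-) = S(e_i)\blacktriangleright(-)$ gives
\[
  S(u)\bullet a = \sum_i\langle e_i^{\ast},\, S^{-3}(a_{(1)})\rangle\, S(e_{i(2)})\,a_{(2)}\,S^2(e_{i(1)}).
\]
The crucial point here is that the pairing against $e_i^{\ast}$ no longer involves the output legs $e_{i(1)}, e_{i(2)}$, so the dual-basis sum decouples and collapses via $\sum_i\langle e_i^{\ast}, y\rangle\,\Delta(e_i) = \Delta(y)$ with $y = S^{-3}(a_{(1)})$; two uses of the antipode and counit axioms then leave $S(u)\bullet a = S^{-2}(a)$. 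I expect this half to go through without serious difficulty.

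The real obstacle is the $u$-half. Applying~\eqref{eq2-5} gives $u\bullet a = \sum_i(e_i\blacktriangleright a)\leftharpoonup S_{A^{\ast}}^{-2}(e_i^{\ast})$, and expanding the coproducts produces
\[
  u\bullet a = \sum_i\langle e_i^{\ast},\, S^{-2}(e_{i(1)})\,S^{-2}(a_{(1)})\,S^{-1}(e_{i(4)})\rangle\; e_{i(2)}\,a_{(2)}\,S(e_{i(3)}),
\]
in which $e_i$ now appears both inside the pairing (legs $1$ and $4$) and in the output (legs $2$ and $3$). This is a genuine partial trace of the regular representation twisted by powers of $S$, and the clean decoupling that worked for $S(u)$ is unavailable. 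The hard part will be to evaluate this sum, and this is exactly where the distinguished grouplike element $\alpha$ must enter: I would invoke the integral characterization of $\alpha$ (the relation $\Lambda a = \langle\alpha, a\rangle\Lambda$ for a nonzero left integral $\Lambda$, equivalently a Radford-type trace identity expressing such dual-basis partial traces through the modular data $g\in A$ and $\alpha\in A^{\ast}$), and verify that the $g$-contribution cancels so that only the scalar $\langle\alpha^{-1}, a_{(2)}\rangle$ survives, leaving $u\bullet a = \sum S^2(a_{(1)})\langle\alpha^{-1}, a_{(2)}\rangle$.

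As internal checks I would use the grouplikeness of $\alpha$, which gives $\alpha^{-1}\circ S^2 = \alpha^{-1}$: this makes the two formulas compatible with the central element $u\,S(u)$ acting by $a\mapsto\sum a_{(1)}\langle\alpha^{-1}, a_{(2)}\rangle$, and in the unimodular case $\alpha = \varepsilon$ they specialize to $u\bullet a = S^2(a)$ and $S(u)\bullet a = S^{-2}(a)$, consistent with $u(-)u^{-1} = S^2$. An alternative to evaluating the partial trace head-on would be to establish $S(u)\bullet a = S^{-2}(a)$ first and then compute only the action of the central grouplike $u\,S(u)$, recovering the $u$-formula from $u = (u\,S(u))\cdot S(u)^{-1}$; either way, the one nontrivial ingredient is the modular/integral identity that produces $\alpha$.
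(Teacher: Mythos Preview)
Your computation of $S(u)\bullet a=S^{-2}(a)$ is correct and is exactly what the paper does (the paper writes $S(u)=S^{-1}(u)=\sum_i(\varepsilon\bowtie S_A^{-1}(e_i))\cdot(e_i^{\ast}\bowtie 1)$, which is your expression after the change of basis $e_i\mapsto S^2(e_i)$).

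For the $u$-half, the paper does \emph{not} attempt the partial trace you wrote down; it uses precisely a sharpened form of your ``alternative''. The key correction is that the relevant element is the \emph{grouplike} $S(u)^{-1}u$ (equivalently $u\,S(u)^{-1}$, since $u$ and $S(u)$ commute), not the central element $u\,S(u)$, which is not grouplike. Radford's theorem \cite[Theorem~2]{Ra3} identifies this grouplike in any finite-dimensional quasitriangular Hopf algebra with the modular data; since $D(A)$ is unimodular \cite{Rad1994}, this grouplike is simply the distinguished grouplike element of $D(A)$, which Radford computed to be $\alpha\bowtie g$ (with $g\in A$ the distinguished grouplike of $A^{\ast\ast}$). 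Its Schr\"odinger action is immediate from~\eqref{eq2-5}: one gets $(\alpha\bowtie g)\bullet a=\sum\langle\alpha,S^{-1}(a_{(1)})\rangle\,g\,a_{(2)}\,g^{-1}$. Composing with $S(u)\bullet(-)=S^{-2}(-)$ and then invoking Radford's $S^4$ formula \cite{Ra0} absorbs the $g$-conjugation and yields $u\bullet a=\sum S^2(a_{(1)})\langle\alpha^{-1},a_{(2)}\rangle$. So the ``$g$-cancellation'' you anticipated is exactly the $S^4$ formula, and the ``Radford-type identity'' you needed is the explicit description of the grouplike $S(u)^{-1}u$ in $D(A)$. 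Your head-on partial-trace route would be hard to complete without this input; the paper's route avoids it entirely.
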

\begin{proof}
  Recall that $u = \sum_i S^{-1}_{A^{\ast}}(e_i^{\ast}) \bowtie e_i$, 
where $\{ e_i \}$ is a fixed basis of $A$, and $\{ e_i^{\ast} \}$ is the dual basis to $\{ e_i\}$. 
We first compute the action of $S(u)$. 
Since $S^2(u) = u$,
  \begin{equation*}
    S(u) = S^{-1}(u) = \sum_i (\varepsilon \bowtie S_A^{-1}(e_i)) \cdot (e_i^{\ast} \bowtie 1).
  \end{equation*}
  Hence, for all $a \in {}_{D(A)}A$, we have
  \begin{align*}
    S(u) \rightharpoonup a
    & = \sum_i \big \langle e_i^{\ast}, S^{-1}(a_{(1)}) \big \rangle \, \big (S^{-1}(e_i) \rightharpoonup a_{(2)} \big) \\
    & = \sum_i \big \langle e_i^{\ast}, S^{-1}(a_{(1)}) \big \rangle \, S^{-1}(e_{i(2)}) a_{(2)} e_{i(1)} \\
    & = \sum S^{-1} \Big(S^{-1}(a_{(1)})_{(2)}\Big) a_{(2)} S^{-1}(a_{(1)})_{(1)}
    = S^{-2}(a).
  \end{align*}
  Next, we compute the action of $u$. Fix a non-zero right integral $\lambda \in A^{\ast}$, and define $g \in A$ to be the unique element such that $p \lambda = \langle p, g \rangle \lambda$ for all $p \in A^{\ast}$ 
({\it i.e.}, the distinguished grouplike element of $(A^{\rm cop})^{\ast\ast} = (A^{\ast\,{\rm op}})^{\ast}$ regarded as an element of $A$). 
Radford showed in \cite{Rad1994} that $D(A)$ is unimodular, and $g \bowtie \alpha$ is the distinguished grouplike element of $D(A)^{\rm cop}$. 
Hence, by \cite[Theorem 2]{Ra3}, we have $u \rightharpoonup a = S(u) \rightharpoonup (g \bowtie \alpha) \rightharpoonup a$ for all $a \in {}_{D(A)}A$. 
Using the formula of the fourth power of the antipode \cite{Ra0}, we obtain
  \begin{equation*}
    u \rightharpoonup a
    = \sum \langle \alpha, S^{-1}(a_{(1)}) \rangle \, S^{-2}(g a_{(2)} g^{-1})
    = \sum S^2(a_{(1)}) \big \langle \alpha^{-1}, a_{(2)} \big \rangle.
    \qedhere
  \end{equation*}
\end{proof}

Combining Example~\ref{4-7} and Lemma~\ref{lem:Sch-mod-u-Su-action}, we obtain the following proposition:

\begin{prop}
  \label{prop:braided-dim-involutory-unimodular}
  Let $A$ be a finite-dimensional Hopf algebra over $\boldsymbol{k}$. 
If $A$ is involutory ({\it i.e.} the square of the antipode is the identity) and unimodular, then we have
  \begin{equation*}
    \underline{\boldsymbol{b}\, \text{-}\dim }^l ({}_{D(A)}A)
    = \underline{\boldsymbol{b}\, \text{-}\dim }^r ({}_{D(A)}A)
    = \mathrm{Tr}(\rho(\boldsymbol{b}))
  \end{equation*}
  for all $\boldsymbol{b} \in B_n$, where $\rho: B_n \longrightarrow {\rm GL}(({}_{D(A)}A)^{\otimes n})$ is the braid group action.
\end{prop}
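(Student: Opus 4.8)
The plan is to reduce both braided dimensions to the ordinary trace $\mathrm{Tr}(\rho(\boldsymbol{b}))$ by showing that, under the two stated hypotheses, the Drinfel'd element $u$ of $D(A)$ acts as the identity on the Schr\"odinger module $M := {}_{D(A)}A$. First I would note that $\rho(\boldsymbol{b})$ is a $D(A)$-module endomorphism of $M^{\otimes n}$, being a composite of the braiding $c^{\mathcal{R}}_{M,M}$ and its inverse, so that Example~\ref{4-7} applies to the quasitriangular Hopf algebra $(D(A), \mathcal{R})$ and the endomorphism $\rho(\boldsymbol{b})$. This expresses the two invariants as
\begin{equation*}
\underline{\boldsymbol{b}\, \text{-}\dim }^l(M) = \mathrm{Tr}\bigl( (\underline{u}^{-1}_M \otimes \cdots \otimes \underline{u}^{-1}_M) \circ \rho(\boldsymbol{b}) \bigr), \qquad \underline{\boldsymbol{b}\, \text{-}\dim }^r(M) = \mathrm{Tr}\bigl( (\underline{u}_M \otimes \cdots \otimes \underline{u}_M) \circ \rho(\boldsymbol{b}) \bigr),
\end{equation*}
so that everything comes down to identifying the operator $\underline{u}_M$.

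Next I would read off this operator from Lemma~\ref{lem:Sch-mod-u-Su-action}. Unimodularity of $A$ means precisely that the distinguished grouplike element $\alpha \in A^{\ast}$ equals $\varepsilon_A$; hence $\alpha^{-1} = \alpha \circ S_A = \varepsilon_A$ as well. Involutivity means $S^2 = \mathrm{id}$. Substituting both facts into the formula $u \rightharpoonup a = \sum S^2(a_{(1)}) \langle \alpha^{-1}, a_{(2)} \rangle$ of the lemma yields $u \rightharpoonup a = \sum a_{(1)} \varepsilon(a_{(2)}) = a$, so that $\underline{u}_M = \mathrm{id}_M$. Because the $D(A)$-action on $M$ is an algebra homomorphism, it follows immediately that $\underline{u}^{-1}_M = \mathrm{id}_M$ too. (The same conclusion can be cross-checked from the second formula $S(u) \rightharpoonup a = S^{-2}(a)$, which equals $a$ when $S^2 = \mathrm{id}$.)

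Finally I would substitute $\underline{u}_M = \underline{u}^{-1}_M = \mathrm{id}_M$ into the two trace expressions above. Each tensor factor collapses to an identity operator, and both sides reduce to $\mathrm{Tr}(\rho(\boldsymbol{b}))$, which gives the asserted chain of equalities. Since the two preceding results do all the heavy lifting, there is no genuine obstacle here: the entire argument rests on the single observation that the two hypotheses conspire to make $u$ act trivially on the Schr\"odinger module, after which the formulas of Example~\ref{4-7} finish the proof at once.
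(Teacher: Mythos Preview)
Your proposal is correct and follows exactly the route the paper indicates: the paper simply states that the proposition is obtained by combining Example~\ref{4-7} and Lemma~\ref{lem:Sch-mod-u-Su-action}, and your argument is the natural unpacking of that combination, showing that involutivity and unimodularity force $\underline{u}_M = \mathrm{id}_M$ and then reading off the result from the trace formulas of Example~\ref{4-7}.
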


\par \smallskip 
\subsection{Examples} 

We denote by $\sigma_i \in B_n$ ($i = 1, \dots, n - 1$) the braid of $n$ strands with only one positive crossing between the $i$-th and the $(i+1)$-st strands. 
For integers $p$ and $q$ with $p \ge 2$, the braid
\begin{equation*}
  \boldsymbol{t}_{p, q} := (\sigma_1 \sigma_2 \cdots \sigma_{p-1})^q \in B_{p}
\end{equation*}
is called the $(p, q)$-torus braid, as its closure is the $(p, q)$-torus link. 
The below is an example of the computation of the braided dimension associated with $\boldsymbol{b} = \boldsymbol{t}_{2,q}$.

\par \medskip 
\begin{lem}\label{4-10} 
Let $(A, R)$ be a quasitriangular Hopf algebra over $\boldsymbol{k}$, and $u$ be the Drinfel'd element of it. 
For each non-negative integer $m$ and finite-dimensional left $A$-module $X$, 
\begin{align*}
\underline{\boldsymbol{t}_{2,q}\, \text{-}\dim}_{R}^l X
& ={\begin{cases}
\sum {\rm Tr}\bigl(\underline{u^{m-1}(u^{-m})_{(1)}}_X){\rm Tr}(\underline{u^{m-1}(u^{-m})_{(2)}}_X)  & \text{if $q=2m$},\\[0.3cm]  
\sum {\rm Tr}\bigl(\underline{\bigl( u^{m-1}\otimes u^{m-1}\bigr)\Delta (u^{-m})R_{21}}_{X\otimes X}\circ T_{X,X}\bigr)  & \text{if $q=2m+1$}, 
\end{cases}}\\ 
\underline{\boldsymbol{t}_{2,q}\, \text{-}\dim}_{R}^r \kern0.1em X
&=\begin{cases} 
\sum{\rm Tr}(\underline{u^{m+1}(u^{-m})_{(1)}}_X)
{\rm Tr}(\underline{u^{m+1}(u^{-m})_{(2)}}_X) & \text{if $q=2m$},\\[0.3cm]  
\sum {\rm Tr}(\underline{(u^{m+1}\otimes u^{m+1})\Delta (u^{-m})R_{21}}_{X\otimes X}\circ T_{X,X}) & \text{if $q=2m+1$}. 
\end{cases} 
\end{align*}
Here, for elements $a, b\in A$ the notation 
$\underline{a\otimes b}_{X\otimes X}$ stands for the left action on $X\otimes X$ defined by $x\otimes y\longmapsto (a\cdot x)\otimes (b\cdot y)$ for all $x, y\in X$. 
\end{lem}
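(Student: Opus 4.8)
The plan is to reduce the statement to Example~\ref{4-7}. Since $\boldsymbol{t}_{2,q} = \sigma_1^{\,q} \in B_2$ and $\boldsymbol{b} \mapsto \rho_X(\boldsymbol{b})$ is a homomorphism, we have $\rho_X(\boldsymbol{t}_{2,q}) = (c^R_{X,X})^{q}$, where $c^R_{X,X} = T_{X,X}\circ \underline{R}$ and $\underline{R}$ denotes the operator $x \otimes y \mapsto R\cdot(x\otimes y)$ given by the diagonal $A$-action on $X\otimes X$. Applying the $n=2$ case of Example~\ref{4-7} to $f = (c^R_{X,X})^{q}$, the two quantities to be computed become
\[
\underline{\widetilde{{\rm Tr}}}_R^l\bigl((c^R_{X,X})^{q}\bigr) = {\rm Tr}\bigl((\underline{u^{-1}}_X\otimes \underline{u^{-1}}_X)\circ (c^R_{X,X})^{q}\bigr), \qquad \underline{\widetilde{{\rm Tr}}}_R^r\bigl((c^R_{X,X})^{q}\bigr) = {\rm Tr}\bigl((\underline{u}_X\otimes \underline{u}_X)\circ (c^R_{X,X})^{q}\bigr),
\]
so it remains to put $(c^R_{X,X})^{q}$ into a usable form.

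First I would compute the square of the braiding. A direct calculation gives $(c^R_{X,X})^2 = \underline{R_{21}R}$, the diagonal action of the monodromy element $R_{21}R$; iterating yields $(c^R_{X,X})^{2m} = \underline{(R_{21}R)^m}$. The key step is then to rewrite $(R_{21}R)^m$ so as to match the $u^{\pm1}$-insertions coming from Example~\ref{4-7}. For this I would use the defining property of the Drinfel'd element, $\Delta(u) = (R_{21}R)^{-1}(u\otimes u) = (u\otimes u)(R_{21}R)^{-1}$. The two expressions show that $u\otimes u$ commutes with $R_{21}R$, from which a short manipulation shows $u\otimes u$ and $\Delta(u^{-1})$ commute; hence
\[
(R_{21}R)^m = \bigl[(u\otimes u)\Delta(u^{-1})\bigr]^m = (u^m\otimes u^m)\,\Delta(u^{-m}), \qquad\text{so}\qquad (c^R_{X,X})^{2m} = \underline{(u^m\otimes u^m)\Delta(u^{-m})}.
\]

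The even cases $q = 2m$ then follow by substitution: the operator $(\underline{u^{\mp1}}_X\otimes \underline{u^{\mp1}}_X)\circ (c^R_{X,X})^{2m}$ equals $\sum \underline{u^{m\mp1}(u^{-m})_{(1)}}_X\otimes \underline{u^{m\mp1}(u^{-m})_{(2)}}_X$ (read the upper sign for the left trace, the lower for the right, so that the powers are $u^{m-1}$ and $u^{m+1}$ respectively), and since the trace of a tensor product of operators factors as the product of traces, I obtain exactly the two displayed $q=2m$ formulas. For the odd cases $q = 2m+1$ I would peel off one braiding, $(c^R_{X,X})^{2m+1} = \underline{(u^m\otimes u^m)\Delta(u^{-m})}\circ T_{X,X}\circ \underline{R}$, and slide the flip past $\underline{R}$ using $T_{X,X}\circ \underline{R} = \underline{R_{21}}\circ T_{X,X}$ (which is just $T\underline{R}T = \underline{R_{21}}$ together with $T^2 = {\rm id}$). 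Absorbing the $u^{\mp1}$ factors then gives ${\rm Tr}\bigl(\underline{(u^{m\mp1}\otimes u^{m\mp1})\Delta(u^{-m})R_{21}}_{X\otimes X}\circ T_{X,X}\bigr)$, which is the asserted $q = 2m+1$ expression.

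I expect the only genuine obstacle to be the algebraic identity $(R_{21}R)^m = (u^m\otimes u^m)\Delta(u^{-m})$, which hinges on the commutation of $u\otimes u$ with both $R_{21}R$ and $\Delta(u^{-1})$; the remaining work is bookkeeping. In carrying that out I would be most careful about which of $u$ or $u^{-1}$ is inserted (left versus right partial trace) and about keeping the flip $T_{X,X}$ in the correct position in the odd case, as these are the points where convention errors are most likely to arise.
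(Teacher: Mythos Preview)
Your proposal is correct and follows essentially the same route as the paper: apply Example~\ref{4-7} with $n=2$, compute $(c^R_{X,X})^{2m}$ via the identity $R_{21}R=(u\otimes u)\Delta(u^{-1})=\Delta(u^{-1})(u\otimes u)$ to obtain $(R_{21}R)^m=(u^m\otimes u^m)\Delta(u^{-m})$, and then substitute (with an extra $R_{21}$ and flip in the odd case). The paper packages this slightly differently by first introducing $R^{(q)}$ and the intermediate formula~(\ref{eq4-8}), but the content is the same.
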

\begin{proof}
The formula for $\underline{\boldsymbol{t}_{2,q}\, \text{-}\dim}_{R}^l \kern0.1em X$ can be obtained as follows. 
\par 
Let $\{ e_s\} _{s=1}^d$ be a basis for $X$, and $\{ e_s^{\ast}\} _{s=1}^d$ be its dual basis. 
Let $R^{(q)}$ be the element in $A\otimes A$ defined by 
$$R^{(q)}=\begin{cases} 
(R_{21}R)^m & \text{if $q=2m$},\\ 
(R_{21}R)^mR_{21} & \text{if $q=2m+1$}. 
\end{cases}$$
By Example~\ref{4-7}, we see that 
\begin{equation}\label{eq4-8}
\underline{\boldsymbol{t}_{2,q}\, \text{-}\dim}_{R}^l \kern0.1em X
=\begin{cases}
{\rm Tr}(\underline{( u^{-1}\otimes u^{-1}) R^{(q)}}_{X\otimes X})
& \text{if $q$ is even}, \\ 
{\rm Tr}(\underline{( u^{-1}\otimes u^{-1}) R^{(q)}}_{X\otimes X}\circ T_{X,X})
& \text{if $q$ is odd}.  
\end{cases}
\end{equation}
Since $R_{21}R=\Delta (u^{-1})(u\otimes u)=(u\otimes u)\Delta (u^{-1})$ \cite{Dri2}, it follows that 
$(R_{21}R)^m
=(u^m\otimes u^m)\Delta (u^{-m})$. 
Substituting this equation to (\ref{eq4-8}) we obtain the formula for $\underline{\boldsymbol{t}_{2,q}\, \text{-}\dim}_{R}^l \kern0.1em X$ in the lemma. 
By a similar consideration, the formula for $\underline{\boldsymbol{t}_{2,q}\, \text{-}\dim}_{R}^r \kern0.1em X$ can be obtained. 
\end{proof}

In the case where $A$ is semisimple, the braided dimension of the Schr\"{o}dinger module associated with $\boldsymbol{t}_{2,2}$ has the following representation-theoretic meaning:

\begin{thm}
  \label{thm:t-2-2-braid-and-irrep}
  Suppose that $\boldsymbol{k}$ is an algebraically closed field of characteristic zero. 
If $A$ is a finite-dimensional semisimple Hopf algebra over $\boldsymbol{k}$, then
  \begin{equation*}
    \underline{\boldsymbol{t}_{2,2}\text{-}\dim^l}({}_{D(A)}A) = 
    \underline{\boldsymbol{t}_{2,2}\text{-}\dim^r}({}_{D(A)}A) = \dim(A) \, \sharp {\rm Irr}(A),
  \end{equation*}
  where $\sharp {\rm Irr}(A)$ is the number of isomorphism classes of irreducible $A$-modules.
\end{thm}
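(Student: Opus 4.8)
The plan is to reduce everything to Proposition~\ref{prop:braided-dim-involutory-unimodular} and then to a single trace computation on a Radford-induced module. First I would invoke the Larson--Radford theorem: since $\boldsymbol{k}$ has characteristic zero and $A$ is semisimple, $A$ is automatically cosemisimple and involutory ($S^2=\mathrm{id}$), and any finite-dimensional semisimple Hopf algebra is unimodular. Thus $A$ is involutory and unimodular, so Proposition~\ref{prop:braided-dim-involutory-unimodular} applies and yields
\[
  \underline{\boldsymbol{t}_{2,2}\text{-}\dim}^l({}_{D(A)}A)
  = \underline{\boldsymbol{t}_{2,2}\text{-}\dim}^r({}_{D(A)}A)
  = \mathrm{Tr}(\rho(\boldsymbol{t}_{2,2})),
\]
where, writing $M:={}_{D(A)}A$ and $c$ for the canonical braiding of $M$, one has $\rho(\boldsymbol{t}_{2,2})=\rho(\sigma_1^2)=c_{M,M}^2$. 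Hence it suffices to show $\mathrm{Tr}(c_{M,M}^2)=\dim(A)\,\sharp{\rm Irr}(A)$.

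Next I would reinterpret this ordinary trace as a character value. The double braiding $c_{M,M}^2$ is the factorwise action of the monodromy $\mathcal{R}_{21}\mathcal{R}=(u\otimes u)\Delta(u^{-1})$ on $M\otimes M$, where $u$ is the Drinfel'd element of $D(A)$. By Lemma~\ref{lem:Sch-mod-u-Su-action}, together with $S^2=\mathrm{id}$ and $\alpha=\varepsilon$ (unimodularity), the element $u$ acts as the identity on $M$; therefore the $(u\otimes u)$-factor disappears and $c_{M,M}^2$ coincides with the module action of $u^{-1}$ on $M\otimes M$, so that $\mathrm{Tr}(c_{M,M}^2)=\chi_{M\otimes M}(u^{-1})$. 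Since $A$ is semisimple and cosemisimple, $D(A)$ is semisimple, hence involutory, and combining Example~\ref{4-7} with $S_{D(A)}^2=\mathrm{id}$ gives the closed form $u^{-1}=\sum_i e_i^{*}\bowtie e_i$. Using Proposition~\ref{prop:Sch-mod-tensor} with $n=2$ to replace $M\otimes M$ by $I_A(A_{\mathrm{ad}})$, the problem becomes the computation of $\mathrm{Tr}_{I_A(A_{\mathrm{ad}})}(\underline{u^{-1}})$.

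The heart of the argument is then the trace identity
\[
  \mathrm{Tr}_{I_A(V)}(\underline{u^{-1}}) = \dim(A)\cdot\dim\mathrm{Hom}_A(\boldsymbol{k},V)
\]
for every finite-dimensional left $A$-module $V$. To prove it I would substitute $u^{-1}=\sum_i e_i^{*}\bowtie e_i$ into the explicit $D(A)$-action on $I_A(V)=V\otimes A$ computed in the proof of Proposition~\ref{2-3}, trace over the $V$-factor so as to produce the character $\chi_V$ evaluated on an inner Sweedler leg, and then contract the canonical element $\sum_i e_i^{*}\otimes e_i$ against the surviving legs. Here $S^2=\mathrm{id}$ and the integral theory of the semisimple unimodular algebra $A$ should collapse these contractions: the $A$-trace produces the factor $\dim(A)$, while the $\chi_V$-leg is projected onto the adjoint-invariant part, producing $\dim\mathrm{Hom}_A(\boldsymbol{k},V)$. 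Finally, for any Hopf algebra the adjoint invariants satisfy $A_{\mathrm{ad}}^A=Z(A)$, and for semisimple $A$ one has $\dim Z(A)=\sharp{\rm Irr}(A)$; specializing $V=A_{\mathrm{ad}}$ then gives $\mathrm{Tr}(c_{M,M}^2)=\dim(A)\,\sharp{\rm Irr}(A)$, which completes the proof.

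I expect the main obstacle to lie entirely in the trace identity of the third paragraph: organizing the several Sweedler legs so that the antipode loops close up correctly and the integrals of $A$ emerge is delicate, and one must verify that precisely the invariant component of $\chi_V$ contributes. As a guiding sanity check I would test the whole computation on $A=\boldsymbol{k}[G]$, where $c_{M,M}^2$ permutes the basis $\{g\otimes h\}$ of $M\otimes M$ with fixed points exactly the commuting pairs $(g,h)$; thus $\mathrm{Tr}(c_{M,M}^2)$ equals the number of commuting pairs in $G$, namely $|G|$ times the number of conjugacy classes, in agreement with $\dim(A)\,\sharp{\rm Irr}(A)$.
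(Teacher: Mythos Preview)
Your overall strategy coincides with the paper's: reduce via Proposition~\ref{prop:braided-dim-involutory-unimodular} to an ordinary trace, rewrite $c_{M,M}^2$ as the action of $u^{-1}$ on $M\otimes M$ (using that $u$ acts trivially on $M$ by Lemma~\ref{lem:Sch-mod-u-Su-action} when $A$ is involutory and unimodular), and then identify $M\otimes M$ with $I_A(A_{\mathrm{ad}})$ via Proposition~\ref{prop:Sch-mod-tensor}. The divergence is only in the evaluation of $\mathrm{Tr}(\underline{u^{-1}}_{I_A(A_{\mathrm{ad}})})$. The paper does not attempt your direct Sweedler computation; instead it first invokes the isomorphism $I_A\cong D(A)\otimes_A(-)$ of \cite[Lemma~2.3]{HuZhang2} (valid in the semisimple situation), and then appeals to the Frobenius--Schur indicator machinery of \cite{KSZ}: the ``third formula'' $\nu_n(V)=\dim(A)^{-1}\mathrm{Tr}(\underline{u^{n}}_{D(A)\otimes_A V})$ together with the ``first formula'' $\nu_1(V)=\dim\mathrm{Hom}_A(\boldsymbol{k},V)$, plus a Galois-conjugation trick (using that $u$ has finite order) to pass from $u$ to $u^{-1}$. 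What this buys the paper is that the delicate trace calculation you flag as the main obstacle is outsourced to established results; what your route buys is self-containment, at the price of actually having to carry out that contraction of Sweedler legs on $V\otimes A$. Your sketch is correct in spirit, and the sanity check on $\boldsymbol{k}[G]$ is exactly the example the paper records immediately after the theorem, but be aware that the paper offers no hint that the direct computation simplifies nicely---if you pursue it, you are essentially re-deriving the $n=1$ case of the KSZ third formula by hand.
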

\begin{proof}
  It is sufficient to show $\underline{\boldsymbol{t}_{2,2}\text{-}\dim^l}({}_{D(A)}A) = \dim(A) \, \sharp {\rm Irr}(A)$ in view of Proposition~\ref{prop:braided-dim-involutory-unimodular}. 
By the assumption, Radford induction functor $I_A: {}_A\boldsymbol{\sf M} \longrightarrow {}_{D(A)}\boldsymbol{\sf M}$ is isomorphic to the functor $D(A) \otimes_A (-)$ by \cite[Lemma 2.3]{HuZhang2}. 
Combining this fact with Proposition~\ref{2-10}, we have
  \begin{equation*}
    ({}_{D(A)}A) \otimes ({}_{D(A)} A) \cong I^A(A_{\rm ad}) \cong D(A) \otimes_A A_{\rm ad}.
  \end{equation*}
  Hence, by Lemma~\ref{4-10},
  \begin{equation}
    \label{eq:Thm-4-13-proof-1}
    \begin{aligned}
   \underline{\boldsymbol{t}_{2,2}\text{-}\dim^l}({}_{D(A)}A)
    & = \sum {\rm Tr}(\underline{(u^{-1})_{(1)}}_{{}_{D(A)}A}) {\rm Tr}(\underline{(u^{-1})_{(2)}}_{{}_{D(A)}A}) \\
    & = {\rm Tr}(\underline{u^{-1}}_{({}_{D(A)}A) \otimes ({}_{D(A)}A)})
    = {\rm Tr}(\underline{u^{-1}}_{D(A) \otimes_A A_{\rm ad}}).      
    \end{aligned}
  \end{equation}

  We use some results on the Frobenius-Schur indicator \cite{LM}. 
Let $V$ be a finite-dimensional left $A$-module. 
The \lq\lq third formula'' \cite[\S6.4]{KSZ} of the $n$-th Frobenius-Schur indicator $\nu_n(V)$ ($n = 1, 2, \ldots$) expresses $\nu_n(V)$ by using the Drinfel'd element, as
  \begin{equation*}
    \nu_n(V) = \frac{1}{\dim(A)} {\rm Tr}(\underline{u^n}_{D(A) \otimes_A V}).
  \end{equation*}
  Since $u$ is of finite order \cite{EG1}, $\dim(A) \, \nu_n(V) \in \mathbb{Z}[\xi]$ ($\subset \boldsymbol{k}$), where $\xi \in \boldsymbol{k}$ is a root of unity of the same order as $u$. Hence, if we denote by $z \mapsto \overline{z}$ the ring automorphism of $\mathbb{Z}[\xi]$ defined by $\xi \mapsto \xi^{-1}$, then we have
  \begin{equation*}
    {\rm Tr}(\underline{u^{-n}}_{D(A) \otimes_A V}) = \overline{\dim(A) \, \nu_n(V)}.
  \end{equation*}
  On the other hand, the \lq\lq first formula'' \cite[\S2.3]{KSZ} yields $\nu_1(V) = \dim \bigl({\rm Hom}_A(\boldsymbol{k}, V)\bigr)$. 
Considering the case where $V$ is the adjoint representation $A_{\rm ad}$, we obtain
  \begin{equation*}
    {\rm Tr}(\underline{u^{-1}}_{D(A) \otimes_A A_{\rm ad}})
    = \overline{\dim(A) \, \nu_1(A_{\rm ad})}
    = \overline{\dim \bigl({\rm Hom}(\boldsymbol{k}, A_{\rm ad})\bigr)}
    = \dim(A) \, \sharp {\rm Irr}(A).
  \end{equation*}
  Now the result follows from~(\ref{eq:Thm-4-13-proof-1}).
\end{proof}

As this theorem suggests, the Schr\"{o}dinger module ${}_{D(A)}A$ has much information about the category of $A$-modules, at least, in the semisimple case. 
However, the computation of the braided dimension is not easy in general. 
Fortunately, if $A$ is a group algebra, then the braided dimension of ${}_{D(A)}A$ closely relates to the link group of the closure of the braid, and can be computed in the following way:

\begin{thm}
  \label{thm:br-dim-kG}
  Let $\boldsymbol{b} \in B_n$. If $A = \boldsymbol{k}[G]$ is the group algebra of a finite group $G$, then
  \begin{equation*}
    \underline{\boldsymbol{b}\, \text{-}\dim }^l ({}_{D(A)}A)
    = \underline{\boldsymbol{b}\, \text{-}\dim }^r ({}_{D(A)}A)
    = \sharp {\rm Hom}(\pi_1(\mathbb{R}^3 \setminus \widehat{\boldsymbol{b}}), G)
  \end{equation*}
  in $\boldsymbol{k}$, where $\widehat{\boldsymbol{b}}$ is the link obtained by closing the braid $\boldsymbol{b}$, and $\pi_1$ means the fundamental group.
\end{thm}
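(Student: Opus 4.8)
The plan is to reduce the braided dimension for $A = \boldsymbol{k}[G]$ to a count of group homomorphisms by directly computing the braid group action on the Schrödinger module and identifying the resulting trace with a coloring count for the link $\widehat{\boldsymbol{b}}$.

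The plan is to reduce the braided dimension to an ordinary trace, identify the braid group action on the Schr\"odinger module with the action on colorings by the conjugation quandle of $G$, and then interpret fixed colorings as representations of the link group.

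First I would verify the hypotheses of Proposition~\ref{prop:braided-dim-involutory-unimodular}. Since $A = \boldsymbol{k}[G]$ is cocommutative with $S(g) = g^{-1}$, it is involutory, and the two-sided integral $\Lambda = \sum_{g \in G} g$ is central, so $A$ is unimodular. Hence that proposition applies and reduces both braided dimensions to the ordinary trace $\mathrm{Tr}(\rho(\boldsymbol{b}))$ of the braid group action on $({}_{D(A)}A)^{\otimes n}$.

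Next I would make the braiding explicit. By Proposition~\ref{2-3} the Schr\"odinger module is isomorphic to the Yetter-Drinfel'd module $I_A(\boldsymbol{k})$, which has basis $\{g\}_{g \in G}$, the conjugation $A$-action $h \cdot g = h g h^{-1}$, and the canonical grading $\rho(g) = g \otimes g$. Under the braiding of ${}_A\mathcal{YD}^A$ one then computes $c(g \otimes h) = h \otimes h g h^{-1}$ on basis elements; this is exactly the braiding of the conjugation quandle $\mathrm{Conj}(G)$. A short check shows that each $\sigma_i$ and each $\sigma_i^{-1}$ send basis elements to basis elements with coefficient $1$, so $\rho(\boldsymbol{b})$ is a permutation matrix on $\{g_1 \otimes \cdots \otimes g_n\}_{g_i \in G}$ and $\mathrm{Tr}(\rho(\boldsymbol{b}))$ equals the number of basis elements fixed by $\boldsymbol{b}$. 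The braided-dimension is isomorphism-invariant by Lemma~\ref{4-8}(1), so working in $I_A(\boldsymbol{k})$ is harmless.

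Finally I would interpret the fixed basis elements geometrically. A tuple $(g_1, \ldots, g_n) \in G^n$ colors the top endpoints of $\boldsymbol{b}$; tracing the strands downward through the crossings propagates these colors so that, at each crossing, the over-strand conjugates the under-strand exactly as in a Wirtinger relation. The fixed-point condition is that the bottom colors agree with the top colors, which is precisely what is needed for the coloring to descend to a well-defined $G$-coloring of the closed diagram $\widehat{\boldsymbol{b}}$ by $\mathrm{Conj}(G)$. Since such colorings assign an element of $G$ to each arc subject to the crossing relations, they biject with assignments of the Wirtinger meridian generators, hence with $\mathrm{Hom}(\pi_1(\mathbb{R}^3 \setminus \widehat{\boldsymbol{b}}), G)$, and counting them gives the stated identity. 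The main obstacle will be making this last bijection fully precise: one must align the quandle/braiding conventions with the orientation conventions of the Wirtinger presentation so that the crossing relations coincide, and confirm that closing the braid imposes exactly the meridian-identification relations and no extraneous ones.
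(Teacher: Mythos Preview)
Your proposal is correct and follows essentially the same route as the paper: reduce to an ordinary trace via Proposition~\ref{prop:braided-dim-involutory-unimodular}, identify the braiding on the Schr\"odinger module with the conjugation-quandle braiding on $G$, and interpret the resulting fixed-point count as homomorphisms from the link group. The only difference is that for the final topological step the paper simply cites Freyd and Yetter \cite[Proposition~4.2.5]{FY89} rather than spelling out the Wirtinger/coloring bijection you outline (and flag as needing care).
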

\begin{proof}
Set $X = {}_{D(A)}A$ for simplicity. Then the braiding $c_{X,X}$ is given by
  \begin{equation*}
    c_{X,X}(g \otimes h) = h \otimes (h^{-1} \blacktriangleright g)
    = h \otimes h^{-1} g h \quad (g, h \in G).
  \end{equation*}
  Let $B_n$ act on $G^n$ by
  \begin{equation*}
    \varrho(\sigma_i)(g_1, \dots, g_n) = (g_1, \dots, g_{i-1}, g_{i+1}, g_{i+1}^{-1} g_i g_{i+1}, g_{i+2}, \dots, g_n)
    \quad (g_1, \dots, g_n \in G).
  \end{equation*}
  By Proposition~\ref{prop:braided-dim-involutory-unimodular}, $\underline{\boldsymbol{b}\, \text{-}\dim }^l ({}_{D(A)}A)$ and $\underline{\boldsymbol{b}\, \text{-}\dim }^r ({}_{D(A)}A)$ are equal to the number of fixed points of $\varrho(\boldsymbol{b})$ regarded as an element of $\boldsymbol{k}$. 
On the other hand, the number of fixed points of $\varrho(\boldsymbol{b})$ has been studied by Freyd and Yetter \cite{FY89} in relation with link invariants arising from crossed $G$-sets. 
The claim of this theorem follows from \cite[Proposition 4.2.5]{FY89}.
\end{proof}

\begin{exam}
  \rm
  We consider the case where $A = \boldsymbol{k}[G]$ is the group algebra of a finite group $G$. 
If $\boldsymbol{k}$ is an algebraically closed field of characteristic zero, then we obtain
 \begin{equation}
    \label{eq:concluding-remark-1}
    \begin{gathered}
      \underline{\boldsymbol{t}_{2,2}\,\text{-}\dim}^{l}({}_{D(A)} A)
      = \underline{\boldsymbol{t}_{2,2}\,\text{-}\dim}^{r}({}_{D(A)} A)
      = |G| \cdot \sharp {\rm Conj}(G), \\
      \underline{\boldsymbol{t}_{2,2}\,\text{-}\dim}^{l}({}_{D(A^{\ast})} A^{\ast})
      = \underline{\boldsymbol{t}_{2,2}\,\text{-}\dim}^{r}({}_{D(A^{\ast})} A^{\ast})
      = |G|^2
    \end{gathered}
  \end{equation}
  by Theorem~\ref{thm:t-2-2-braid-and-irrep}, where ${\rm Conj}(G)$ is the set of conjugacy classes of $G$. In particular,
  \begin{equation*}
    \underline{\boldsymbol{t}_{2,2}\,\text{-}\dim}^{l}({}_{D(A)} A)
    \ne \underline{\boldsymbol{t}_{2,2}\,\text{-}\dim}^{l}({}_{D(A^{\ast})} A^{\ast})
  \end{equation*}
  whenever $G$ is non-abelian. 
This result is interesting from the viewpoint that some other monoidal Morita invariants, such as ones introduced in \cite{EG1} and \cite{Shimizu}, are in fact invariants of the braided monoidal category of the representations of the Drinfel'd double.

  In topology, the link $H = \widehat{\boldsymbol{t}}_{2,2}$ is known as the Hopf link. 
Since $\pi_1(\mathbb{R} \setminus H)$ is the free abelian group of rank two, we have 
  \begin{equation}
    \label{eq:concluding-remark-2}
    \underline{\boldsymbol{t}_{2,2}\,\text{-}\dim}^{l}({}_{D(A)} A)
    = \underline{\boldsymbol{t}_{2,2}\,\text{-}\dim}^{r}({}_{D(A)} A) = \sharp {\rm Comm}(G)
  \end{equation}
  by Theorem~\ref{thm:br-dim-kG}, where ${\rm Comm}(G) = \{ (x, y) \in G \times G \mid x y = y x \}$. 
Comparing \eqref{eq:concluding-remark-1} with \eqref{eq:concluding-remark-2}, we get $|G| \cdot \sharp {\rm Conj}(G) = \sharp {\rm Comm}(G)$. 
Although this formula itself is well-known in finite group theory, 
we expect that some non-trivial formulas for finite groups (or, more generally, for finite-dimensional semisimple Hopf algebras) would be obtained via the investigation of the braided dimension.
\end{exam}

By (\ref{eq:q-dim-as-braided-dim}) and \cite[Example 9.3.8]{MajidBook}, we have $\underline{\boldsymbol{1}\,\text{-}\dim}^{r}({}_{D(A)}A) = {\rm Tr}(S_A^2)$ (see \cite{BulacuTorrecillas} for the quasi-Hopf case). 
In particular, $\underline{\boldsymbol{1}\,\text{-}\dim}^{r}({}_{D(A)}A) = 0$ whenever $A$ is not cosemisimple. 
More strongly, we have the following theorem:

\par \smallskip 

\begin{thm}
  \label{thm:br-dim-non-coss}
  Let $A$ be a finite-dimensional Hopf algebra. 
If $A$ is not cosemisimple, then we have
  $\underline{\boldsymbol{b}\, \text{-}\dim }^l ({}_{D(A)}A) = \underline{\boldsymbol{b}\, \text{-}\dim }^r ({}_{D(A)}A) = 0$
  for all braids $\boldsymbol{b}$.
\end{thm}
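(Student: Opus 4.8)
The plan is to reduce the two braided dimensions to ordinary traces of the Drinfel'd element on tensor powers of the Schr\"{o}dinger module, and then to isolate a factor that detects cosemisimplicity. By Example~\ref{4-7}, writing $M = {}_{D(A)}A$ and $n$ for the number of strands of $\boldsymbol b$, we have $\underline{\boldsymbol b\,\text{-}\dim}^r(M) = {\rm Tr}\big((\underline{u}_M)^{\otimes n}\circ \rho_M(\boldsymbol b)\big)$, and likewise with $u^{-1}$ for the left version. First I would use the quasitriangular axioms for $(D(A),\mathcal R)$, which give $\Delta^{(n-1)}(u)$ as $u\otimes\cdots\otimes u$ times a product of (inverse) monodromy operators $\underline{\mathcal R_{21}\mathcal R}$ (cf.\ \cite{Dri2}); since each monodromy and $\rho_M(\boldsymbol b)$ are $D(A)$-module endomorphisms, this rewrites $(\underline{u}_M)^{\otimes n}$ as $\underline{u}_{M^{\otimes n}}\circ B$ with $B$ a $D(A)$-linear automorphism. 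Hence $\underline{\boldsymbol b\,\text{-}\dim}^r(M) = {\rm Tr}(\underline{u}_{M^{\otimes n}}\circ g)$, where $g := B\circ\rho_M(\boldsymbol b)\in{\rm End}_{D(A)}(M^{\otimes n})$ is genuinely $D(A)$-linear and $\underline{u}_{M^{\otimes n}}$ is the honest action of $u$. By Proposition~\ref{prop:Sch-mod-tensor} I identify $M^{\otimes n}\cong I_A(V)$ with $V = A_{\mathrm{ad}}^{\otimes(n-1)}$. Thus it suffices to prove: for every finite-dimensional $A$-module $V$ and every $g\in{\rm End}_{D(A)}(I_A(V))$, the traces ${\rm Tr}(\underline{u^{\pm1}}_{I_A(V)}\circ g)$ vanish when $A$ is not cosemisimple.

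Next I would compute the action of $u^{\pm 1}$ on $I_A(V)=V\otimes A$, extending Lemma~\ref{lem:Sch-mod-u-Su-action}. A short computation with the universal $R$-matrix shows that for any $N\in{}_A\mathcal{YD}^A$ the element $S(u)$ acts cleanly by $S(u)\cdot n = \sum S^{-1}(n_{(1)})\cdot n_{(0)}$; specialising to the coaction $v\otimes a\mapsto (v\otimes a_{(1)})\otimes a_{(2)}$ of $I_A(V)$ (so that $A$ carries the regular coaction $\Delta$) gives $S(u)\cdot(v\otimes a)=\sum(S^{-1}(a_{(1)})\cdot v)\otimes S^{-2}(a_{(2)})$. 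Thus $u^{\pm1}$ acts as $S^{\mp2}$ on the regular factor together with the action on $V$, up to the grouplike twist $u = S(u)\cdot(\alpha\bowtie g)$ used in Lemma~\ref{lem:Sch-mod-u-Su-action}. The essential structural input is that, as a right $A$-comodule, $I_A(V)$ is cofree, so any $D(A)$-linear $g$ is in particular $A$-colinear and therefore has the form $g(v\otimes a)=\sum\bar g(v\otimes a_{(1)})\otimes a_{(2)}$ for a linear map $\bar g\colon V\otimes A\to V$.

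Combining these, and using colinearity of $g$ to push the coaction past $g$, the trace ${\rm Tr}(\underline{u}_{I_A(V)}\circ g)$ collapses, after summing over a basis of the $A$-tensorand, to an expression governed by (a grouplike twist of) the single element $\Gamma := \sum_i e_{i(1)}\,\langle e_i^*, S^{-2}(e_{i(2)})\rangle\in A$ and its coproduct spreads such as $\sum\Gamma_{(1)}\otimes S^{-1}(\Gamma_{(2)})$ (the latter appearing when $\bar g$ genuinely mixes the two tensorands), where $\{e_i\}$ is a basis of $A$ with dual basis $\{e_i^*\}$. The key lemma I would then prove is that $\Gamma$ is a scalar multiple of $\lambda(1)$: pairing with $p\in A^*$ gives $\langle p,\Gamma\rangle = {\rm Tr}\big(a\mapsto\sum\langle p, a_{(1)}\rangle S^{-2}(a_{(2)})\big)$, and Radford's trace formula \cite{Rad1994, Ra3} evaluates this as a fixed nonzero multiple of $\lambda(1)\,\langle p,\Lambda\rangle$, where $\Lambda\in A$ is a nonzero right integral and $\lambda\in A^*$ a nonzero right integral. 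In other words $\Gamma = c\,\lambda(1)\,\Lambda$ with $c\neq 0$; the grouplike and $\alpha$-winding twists produced by passing between $u$ and $S(u)$ merely conjugate $\Gamma$ and do not touch the factor $\lambda(1)$. Since $A$ is cosemisimple if and only if $\lambda(1)\neq 0$, and $A$ is assumed not cosemisimple, we obtain $\Gamma = 0$ and hence ${\rm Tr}(\underline{u^{\pm1}}_{I_A(V)}\circ g)=0$. The left version, and the role of $u^{-1}$ versus $u$, are handled by the same computation with the inverse antipode powers, producing the same $\lambda(1)$ obstruction; this recovers and strengthens the case $\boldsymbol b = \boldsymbol 1$, where the quantity is exactly $\varepsilon(\Gamma) = {\rm Tr}(S^2) = 0$.

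I expect the main obstacle to be the key lemma of the third paragraph: identifying the trace-contraction element $\Gamma$ (and its coproduct spreads) with a multiple of $\lambda(1)$ through Radford's trace formula, and verifying that the grouplike and monodromy twists genuinely preserve this factor. Equally delicate is controlling how the braid-entangled endomorphism $g$ interacts with the $u$-twisted coaction, so that the trace really factors through $\Gamma$ for an arbitrary $D(A)$-linear $g$ and not merely for $g={\rm id}$; here the cofree structure of $I_A(V)$ as an $A$-comodule is exactly what makes the reduction possible.
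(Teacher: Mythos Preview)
Your strategy is sound and would work: both your argument and the paper's ultimately rest on Radford's trace formula to produce the factor $\lambda(1)$ that vanishes precisely when $A$ is not cosemisimple. The difference is one of economy. You keep all $n$ strands in play, identify $M^{\otimes n}\cong I_A(A_{\rm ad}^{\otimes(n-1)})$, rewrite $(\underline{u}_M)^{\otimes n}$ via monodromy as $\underline{u}_{M^{\otimes n}}\circ B$, and then compute the action of $u$ on a general $I_A(V)$ together with the shape of an arbitrary $D(A)$-linear (hence $A$-colinear) endomorphism of it. This is correct, but it forces you to track the interaction of $\bar g$ with the coproduct spreads of $\Gamma$, which is exactly the delicacy you flag.

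The paper sidesteps all of this by taking the iterated partial braided traces \emph{first}: the composite $\widetilde{f}=(\underline{\rm Tr}_c^{l,1})^{n-1}(\rho_M(\boldsymbol b))$ is automatically a $D(A)$-linear endomorphism of the single module ${}_{D(A)}A$, so no monodromy rewriting is needed and no identification $M^{\otimes n}\cong I_A(V)$ is invoked. The adjunction ${\rm End}_{D(A)}({}_{D(A)}A)\cong{\rm Hom}_{D(A)}({}_{D(A)}A,I_A(\boldsymbol{k}))\cong{\rm Hom}_A(A_{\rm ad},\boldsymbol{k})$ then gives the very simple form $\widetilde f(a)=\sum\langle f,a_{(2)}\rangle a_{(1)}$ for a single functional $f\in A^*$. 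Combined with Lemma~\ref{lem:Sch-mod-u-Su-action}, the final braided trace is ${\rm Tr}\big(a\mapsto\sum S^2(a_{(1)})\langle\alpha^{-1},a_{(2)}\rangle\langle f,a_{(3)}\rangle\big)$, which is immediately zero by one application of Radford's formula with $p=\alpha^{-1}\!\ast f$. So your route is genuinely different only in that it postpones the reduction to one strand; the paper's early reduction makes the ``key lemma'' you anticipate unnecessary and collapses the argument to a few lines.
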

\begin{proof}
  Let, in general, $X$ be a finite-dimensional Hopf algebra, let $\Lambda \in X$ be a left integral, and let $\lambda \in X^{\ast}$ be the right integral such that $\langle \lambda, \Lambda \rangle = 1$. 
By \cite[Proposition 2 (a)]{Rad1994},
  \begin{equation*}
    \mathrm{Tr}\Big( X \longrightarrow X; \ x \mapsto S^2(x_{(2)}) \langle p, x_{(1)} \rangle \Big) = \langle \lambda, 1 \rangle \langle p, \Lambda \rangle
  \end{equation*}
  for all $p \in X^{\ast}$. 
Applying this formula to $X = A^{\rm op\, cop}$, we have
  \begin{equation}
    \label{eq:trace-formula}
    \mathrm{Tr}\Big( A \longrightarrow A; \ a \mapsto S^2(a_{(1)}) \langle p, a_{(2)} \rangle \Big) = 0
  \end{equation}
  for all $p \in A^{\ast}$, since the Hopf algebra $A$ is not cosemisimple and thus $\langle \lambda, 1 \rangle = 0$.

  Now, let $\boldsymbol{b} \in B_n$ be a braid. 
By \eqref{eq4-4}, $\underline{\boldsymbol{b}\, \text{-}\dim }^l ({}_{D(A)}A) = \underline{\rm Tr}_c^l(\widetilde{f})$, where
  \begin{equation*}
    \widetilde{f}
    = (\overbrace{\underline{\mathrm{Tr}}_c^{l,1} \circ \dotsb \circ \underline{\mathrm{Tr}}_c^{l,1}}^{n-1})(\rho(b)).
  \end{equation*}
  Let $f: A_{\rm ad} \longrightarrow \boldsymbol{k}$ be the $A$-linear map corresponding to $\widetilde{f}$ under the isomorphism
  \begin{equation*}
    \begin{CD}
      {\rm End}_{D(A)}({}_{D(A)}A)
      @>{\cong}>> {\rm Hom}_{D(A)}({}_{D(A)}A, I_A(\boldsymbol{k}))
      @>{\cong}>> {\rm Hom}_A(A_{\rm ad}, \boldsymbol{k})      
    \end{CD}
  \end{equation*}
  given by \eqref{eq2-9} and Proposition~\ref{2-3}. 
Then we have $\widetilde{f}(a) = \sum \langle f, a_{(2)} \rangle \, a_{(1)}$ for all $a \in {}_{D(A)}A$, and therefore $\underline{\boldsymbol{b}\, \text{-}\dim }^l ({}_{D(A)}A)$ is equal to the trace of the linear map
  \begin{equation*}
    A \longrightarrow A;
    \quad a \mapsto u \rightharpoonup \widetilde{f}(a)
    = \sum S^2(a_{(1)}) \langle \alpha^{-1}, a_{(2)} \rangle \langle f, a_{(3)} \rangle 
    \quad (a \in A).
  \end{equation*}
  Hence, $\underline{\boldsymbol{b}\, \text{-}\dim }^l ({}_{D(A)}A) = 0$ by \eqref{eq:trace-formula}. 
The equation $\underline{\boldsymbol{b}\, \text{-}\dim }^r ({}_{D(A)}A) = 0$ is proved in a similar way.
\end{proof}

\par \smallskip 
By this theorem, we could say that the braided dimension of the Schr\"{o}dinger module is not interesting as a monoidal Morita invariant for non-cosemisimple Hopf algebras. 
However, the endomorphism of ${}_{D(A)}A$ induced by a braid, such as $\widetilde{f}$ in the above proof, is not generally zero, and thus may have some information about $A$. 
For example, let us consider the map
\begin{equation}
  z_M := \underline{\rm Tr}^{r,1}_c(\rho_M(\sigma_1)): M \longrightarrow M
\end{equation}
for finite-dimensional $M \in {}_{D(A)}\boldsymbol{\sf M}$, where $\rho_M: B_2 \longrightarrow {\rm GL}(M^{\otimes 2})$ is the action of $B_2$. 
One can check that $z_M$ is given by the action of $z := u S(u)$ on $M$. 
Hence, if $M = {}_{D(A)}A$, then
\begin{equation}
  z_{M}(a) = z \bullet a = a_{(1)} \langle \alpha^{-1}, a_{(2)} \rangle
  \quad (a \in {}_{D(A)}A)
\end{equation}
by Lemma~\ref{lem:Sch-mod-u-Su-action}, where $\alpha \in A^{\ast}$ is the distinguished grouplike element. 
Therefore this map has the following information: $z_M$ for $M = {}_{D(A)}A$ is the identity if and only if $A$ is unimodular.

\par \bigskip \noindent 
{\bf \large Acknowledgements}
\par \ 
The authors would like to thank Professor Akira Masuoka for showing a direction to solve our problem and for continuous  encouragement.  
For this research the first author (K.S.) is supported by Grant-in-Aid for JSPS Fellows (24$\cdot$3606), and the second author (M.W.) is partially supported by Grant-in-Aid for Scientific Research (No. 22540058), JSPS.

\par \medskip 
{\sc \small Graduate School of Mathematics, Nagoya University, Furo-cho, Chikusa-ku, Nagoya
464-8602, Japan}
\par 
{\small E-mail address: x12005i@math.nagoya-u.ac.jp} 
\par \medskip 
{\sc \small Department of Mathematics, Faculty of Engineering Science, Kansai University, Suita-shi, Osaka 564-8680, Japan} 
\par 
{\small E-mail address: wakui@kansai-u.ac.jp}

\end{document}